\let\realItem\item 
\NewDocumentCommand\myItem{ o }{%
   \IfNoValueTF{#1}%
      {\realItem}
      {\realItem[#1]\def\@currentlabel{#1}}
}
\setlist[enumerate]{
    before=\let\item\myItem,       
    label=\textnormal{(\arabic*)}, 
}
\numberwithin{equation}{section}
\title{Semi-deterministic processes with applications in random billiards\thanks{Mathematics Subject Classification: 37A50; 60D05; 37C40; 70E18; 70L99. Keywords: ergodic measure, singular interactions, stochastic billiards, contact dynamics}}
\author{Peter Rudzis\thanks{University of North Carolina - Chapel Hill. Email: prudzis@unc.edu. Research supported in part by  the RTG award (DMS-2134107) from the NSF.}}
\date{December 30th, 2023}
\newtheorem{theorem}{Theorem}[section]
\newtheorem{lemma}[theorem]{Lemma}
\newtheorem{corollary}[theorem]{Corollary}
\newtheorem{proposition}[theorem]{Proposition}
\newtheorem{remark}[theorem]{Remark}
\DeclareMathOperator{\sgn}{sgn}
\DeclareMathOperator{\MKE}{MKE}
\DeclareMathOperator{\MKI}{MKI}
\DeclareMathOperator{\MK}{MK}
\DeclareMathOperator{\id}{id}
\begin{document}

\maketitle

\begin{abstract}
We study the ergodic properties of two classes of random dynamical systems: a type of Markov chain which we call the \textit{alternating random walk} and a certain stochastic billiard system which describes the motion of a free-moving rough disk bouncing between two parallel rough walls. Our main results characterize the types of Markov transition kernels which make each system ergodic -- in the first case, with respect to uniform measure on the state space, and in the second case, with respect to Lambertian measure (a classic measure from geometric optics). In addition, building on results from \cite{rudzis2022}, we give explicit examples of rough microstructures which produce ergodic dynamics in the second system. Both systems have the property that the transition kernel governing the dynamics is singular with respect to uniform measure on the state space. As a result, these systems occupy a kind of mean in the problem space between diffusive processes, where establishing ergodicity is relatively easy, and physically realistic deterministic systems, where questions of ergodicity are far less approachable.
\end{abstract}

\section{Introduction}

Establishing ergodicity of deterministic physical systems is a notoriously difficult problem \cite{simanyi-sinai}. Yet the same task is often easy if we overlay a sufficient amount of random noise on the dynamics. As a case in point, consider the behavior, on the one hand, of a billiard particle in some complicated spatial domain and, on the other hand, of a diffusion with reflecting boundary condition in the same domain. This observation motivates the study of systems with noisy dynamics such that, on each step of the evolution, the noise is concentrated in some submanifold of the phase space whose co-dimension is strictly greater than zero. We refer to such a dynamical system as a \textit{semi-deterministic process}. We propose that, between systems where ergodicity is trivial to prove and systems where the same problem is relatively intractable, semi-deterministic processes occupy a virtuous middle ground. 

This paper focuses on two closely related random dynamical systems: a Markov chain which we refer to as an \textit{alternating random walk}, and a stochastic billiard system, which describes the motion of a free-moving rough disk bouncing between two parallel rough walls. More precisely, each of these examples is a \textit{class} of dynamical systems in its own right, since in each case the choice of the Markov kernel which determines the evolution of the system is free up to the satisfaction of certain constraints. The main results of this paper characterize the choices of Markov kernels which make each system ergodic.

In both systems, the Markov kernel acts on a two-dimensional state space but is supported in a one-dimensional submanifold of the phase space which depends on the current state of the process. One should not think that this feature is artificially imposed on our model. Quite the contrary, in the billiard system which we consider, the feature arises naturally as a result of the presence of locally conserved quantities. Indeed, one may view our work as a followup to the monograph \cite{rudzis2022} which justifies the presence of these conserved quantities by showing that they arise from a ``physically realistic'' limiting procedure. We discuss this physical interpretation further below and in \S\ref{sec:billiards}.

We regard this paper as a ``proof of concept,'' supporting the notion that semi-deterministic processes inhabit a propitious sector of the dynamical systems problem space, deserving further exploration. The arguments involved in establishing our main results, stated in \S\ref{ssec:mainresults} and proved in \S\ref{sec:ergproofs}, are (we believe) fairly elegant, involving nontrivial mathematics, but avoiding heavy amounts of technical baggage. The proofs start out geometric, establishing a number of symmetry conditions which the dynamical systems under consideration must satisfy; appeal to established results from ergodic theory; and conclude somewhat unexpectedly with functional analysis, bringing crucially to bear the injectivity of the classic Abel integral transform on $L^1$. The results should be amenable to generalization -- see \S\ref{ssec:future} for further discussion.

\subsection{Physical motivation} \label{ssec:phys}

Consider a particle of gas moving down a long pipe (modeled, say, as a circular cylinder of radius $R > 1$). We model the particle as a rigid ball of radius 1, which moves linearly in the interior of the pipe and collides with the boundary according to some collision law which conserves kinetic energy. If the collision dynamics are such that direction of impact is normal to the surface of the wall, the evolution of the system is not hard to understand. We can produce more interesting (and presumably more physically realistic) dynamics by introducing small rough features on the surfaces of the disk and wall. This results in collision dynamics which are noisy and which mix linear and angular components of energy.

Restricting to the two-dimensional case, the pipe is modeled as the strip $\{(x_1,x_2) : |x_2| \leq R\}$ where $R > 1$, and the particle is modeled as a disk of radius 1, which moves freely in the interior of the strip and collides with the boundary according to some prescribed (and in general random) dynamical rule. The dynamical evolution of the system is then completely determined by the initial configuration of the disk and the sequence of post-collision angular and linear velocities. 

We assume that the disk has a rotationally symmetric mass density, and we denote the total mass of the disk by $m$ and the moment of inertia of the disk about its center by $J$. We define an inner product on $\mathbb{R}^3$ by 
\begin{equation} \label{eq:kinnerproduct}
\langle v, v' \rangle = \frac{1}{2}Jv_0v_0' + \frac{1}{2}m(v_1v_1' + v_2v_2'), \text{ where } v = (v_0, v_1, v_2), v' = (v_0', v_1', v_2'),
\end{equation}
and we denote the corresponding norm by $\| \cdot \|$.
If velocity takes the form $v = (v_0,v_1,v_2) \in \mathbb{R}^3$, where $v_0$ is the counter-clockwise angular velocity of the disk about its center, and $v_1$ and $v_2$ are the linear velocity components of the center of the disk, then the kinetic energy of the disk is given by $\| v\|^2$. We assume that kinetic energy of the disk is equal to 1 for all time (thus the collisions do not dissipate energy). 

Physically, the collision dynamics are supposed to depend on the asperities (microscopic rough features) on the surfaces of each body. For simplicity we suppose that the roughness on each body is spatially homogeneous, so that the post-collision velocity depends only on the pre-collision velocity (i.e. the previous post-collision velocity). We exclude the set of tangential velocities (velocities $v$ such that $v_2 = 0$) from the set of allowable velocities, since no collision can take place in this situation. We denote the kinetic energy unit 2-sphere by
\begin{equation} \label{eq:kinspheredef}
\mathbb{S}^2 := \{v \in \mathbb{R}^3 : \|v\|^2 = 1\} = \{(v_0,v_1,v_2) : 2^{-1}Jv_0^2 + 2^{-1}m(v_1^2 + v_2^2) = 1\}. 
\end{equation}
We also let
$$
\mathbb{S}^2_\pm := \{(v_0,v_1,v_2) \in \mathbb{S}^2 : \pm v_2 > 0\}.
$$
The sequence of post-collision velocities may be viewed as a Markov process on 
$$
\hat{\mathbb{S}}^2 := \{(v_0,v_1,v_2) \in \mathbb{S}^2 : v_2 \neq 0\}.
$$
We denote the Markov kernel on $\hat{\mathbb{S}}^2$ which determines the velocity process by $K(v, dv')$. Since the component $v_2$ of the post-collision velocity will alternate sign following each collision, we may assume that $K$ takes the form
\begin{equation} \label{eq:twosides}
K(v,dv') = \begin{cases}
K_+(-v,dv') \quad \text{ if } v_2 < 0 \\
R_* K_+(-v, dv') \text{ if } v_2 > 0,
\end{cases}
\end{equation}
where $R : \mathbb{S}^2_+ \to \mathbb{S}^2_-$ is the restriction of the 180 degree rotation about the $v_0$-axis in $\mathbb{R}^3$ to the unit hemispheres $\mathbb{S}^2_\pm$, and $K_+$ is some Markov kernel on $\mathbb{S}^2_+$, which determines the local collision behavior when the disk hits the wall. We refer to $K_+$ as the \textit{collision law} associated with the system. Note that $K_+$ takes the \textit{negative} of the pre-collision velocity to the post-collision velocity, ensuring that $K_+$ acts on the single space $\mathbb{S}^2_+$.

Since the collision law $K_+$ is supposed to capture the dynamical effect of the asperities, it should belong to a certain class $\mathcal{A}$ of ``physically realizable'' collision laws. This notion is given a precise formulation in the monograph \cite{rudzis2022}, which we also review in \S\ref{ssec:rough}. To sketch the main idea, imagine that we equip each body with rigid, $\epsilon$-scale geometric microstructures. The collision behavior depends on the microstructures we choose. Each element of $\mathcal{A}$ is, by definition, expressible as a certain limit as $\epsilon \to 0$ of a sequence of deterministic collision laws, each of which describes the collision dynamics when the $\epsilon$-scale rough features on the bodies interact through the classical laws of rigid body mechanics. The question addressed in \cite{rudzis2022} is the following:

\vspace{0.1in}

\textit{Question:} What is a mathematically convenient characterization of the class $\mathcal{A}$?

\vspace{0.1in}

\noindent The main result of \cite{rudzis2022} answers that $K_+$ lies in $\mathcal{A}$ if and only if it has the coordinate representation
\begin{equation} \label{eq:roughdecomp}
K_+(\theta,\psi; d\theta' d\psi') = P(\theta, d\theta') \delta_{\pi -\psi}(d\psi'), \quad (\theta,\psi) \in (0,\pi) \times (0,\pi),
\end{equation}
where $(\theta,\psi)$ is a particular choice of spherical coordinates restricted to the hemisphere $\mathbb{S}^2_+$ (see \S\ref{ssec:ergodic}), and $P$ is a Markov kernel on the interval $(0,\pi)$ which satisfies the following property:

\vspace{0.1in}

(P) $P$ is reversible with respect to the probability measure $\Lambda^1(d\theta) := \frac{1}{2}\sin\theta d\theta$, $ \theta \in (0,\pi)$.

\vspace{0.1in}

\noindent Moreover, \cite{rudzis2022} shows that the Markov kernel $P$ may be interpreted as a ``reflection law,'' describing reflections of point particles from a foreshortened version of the rough microstructure which gives rise to the Markov kernel $K_+$. (More precisely, see Theorem \ref{thm:colchar}). This fact is important, as it allows us to compute explicitly the collision law $K_+$ for many choices of microstructures. Applying the main results of this paper in \S\ref{ssec:examples}, we identify many examples of microstructures on the walls of the strip which give rise to ergodic dynamics for the velocity process described above. 

The physical significance of the product decomposition \eqref{eq:roughdecomp} is as follows. The ``north pole'' of the spherical coordinates $(\theta,\psi)$ coincides with the unit vector parallel to $\chi_0 := (1,-1,0)$. The trivial second factor in \eqref{eq:roughdecomp} means that collisions with the lower wall of the strip conserve the quantity 
\begin{equation} \label{eq:conserved1}
\langle v,\chi_0\rangle = Jv_0 - mv_1,
\end{equation} 
and collisions with the upper wall conserve the quantity 
\begin{equation} \label{eq:conserved2}
\langle v, R(\chi_0) \rangle = Jv_0 + mv_1.
\end{equation}
The presence of these conserved quantities has, in turn, a simple heuristic explanation. If the disk moves with velocity approximately parallel to $\chi_0$ when it collides with the lower wall of the strip, then the disk will ``roll'' along the lower boundary. Consequently, the impact of the disk with the wall will be negligible and the disk will continue to roll indefinitely. This suggests that translation in the direction $\chi_0$ is a symmetry of the configuration space, and therefore, by a loose application of Noether's theorem, the quantity \eqref{eq:conserved1} is conserved. Similar reasoning suggests that collisions with the upper boundary conserve \eqref{eq:conserved2}. Making all this rigorous, of course, takes much more work.

\subsection{Related work} \label{ssec:relatedwork}

The billiards results in this paper relate most closely to the work on Knudsen gas dynamics initiated by R. Feres and G. Yablonsky in \cite{feresyab2004} and continued in \cite{feres2007RW}, analyzing the dynamics of a point particle moving in a long cylindrical chamber with rough walls. These and other authors investigate ergodicity, convergence to stationarity, and the spectral analysis of such systems in \cite{fereszhang2010, fereszhang2012, chumleyferesgerman2021, CFGY2023}. While the scope of these papers is mainly restricted to point particle dynamics, our paper considers particles which are extended rough bodies. This distinction leads to the main technical novelty in our work, namely the presence of singular transition kernels for the dynamics. The question of ergodicity in our case is, consequently, much more delicate. 

As mentioned above, the main results of \cite{rudzis2022}, classifying the types of collision kernels which can be obtained through a certain ``physically realistic'' limiting procedure (see \S\ref{sec:billiards}), motivate the collision dynamics considered in this paper. A. Plakhov previously obtained related results for the point particle case, with the goal of bringing to bear techniques of optimal mass transport to problems of air resistance minimization and invisibility \cite{plakhov2004newtonprob, plakhov2009billiards, plakhov2012}. O. Angel, K. Burdzy, and S. Sheffield obtain a similar classification result in \cite{ABS2013detapprox}.

The alternating random walk which we introduce in \S\ref{sec:mainresults} resembles at least on a broad, conceptual level the coordinate hit-and-run algorithm for sampling from uniform measure on convex bodies \cite{andersondiaconis2007hitnrun}. While the transition kernel for a typical implementation of the coordinate hit-and-run algorithm samples uniformly from chords of the convex body parallel to the coordinate axes, our transition kernels are of general type, only being required to be reversible with respect to uniform measure on the chords. Thus the kinds of questions we ask are much different.

\subsection{Organization of the paper}

In \S\ref{sec:mainresults}, we describe the models and state the main results of our paper, first introducing the alternating random walk in \S\ref{ssec:altwalk}-\S\ref{ssec:mainresults}, and then describing the disk-in-strip billiard system in \S\ref{ssec:ergodic}. 

For the purpose of proving our main results, the coordinate representation \eqref{eq:roughdecomp} for the collision law $K_+$ can be taken as axiomatic. Nonetheless, in \S\ref{ssec:prelim}-\S\ref{ssec:rough}, we describe the precise sense in which collision laws having this representation can be obtained through a ``physically realistic'' limiting procedure, reviewing the main results of \cite{rudzis2022}. This allows us to describe, in \S\ref{ssec:examples}, a number of examples of explicit rough microstructures on the surfaces of the disk and the walls which give rise to ergodic dynamics in the disk-in-strip system. 

In \S\ref{ssec:isoproof}, we show that the alternating random walk and the rough disk velocity process are isomorphic as dynamical systems. In \S\ref{ssec:refnub}, we prove that a certain modification procedure applied to the microstructure, which we refer to as \textit{adding $\delta$-nubs}, results in a microstructure that produces ergodic dynamics, which on a short time-scale are ``close'' to the dynamics arising from the original microstructure. 

Lastly, we prove our main results on the ergodicity of the alternating random walk in \S\ref{sec:ergproofs}.

\subsection{Acknowledgements}

I would like to thank Krzysztof Burdzy for many insightful discussions. A number of the main ideas of this paper first took form under his supervision, during my PhD studies at the University of Washington.

\subsection{Notation}

If $(S,\mathcal{S})$ is a measurable space, then $\MK(S,\mathcal{S})$ denotes the set of all Markov kernels on $(S,\mathcal{S})$. That is, $\MK(S,\mathcal{S})$ is the set of all mappings $p : S \times \mathcal{S} \to \mathbb{R}$ such that (i) for all $A \in \mathcal{S}$, $p(\cdot,A)$ is a measurable map from $(S,\mathcal{S})$ into $\mathbb{R}$, equipped with the Borel $\sigma$-algebra, and (ii) for all $s \in S$, $p(s,\cdot)$ is a measure on $(S,\mathcal{S})$. If $S$ is a topological space, we may suppress the $\mathcal{S}$ from our notation, implicitly understanding that $\mathcal{S}$ is the Borel $\sigma$-algebra.

If $\mu$ is a measure on a measure space $X$, and $T : X \to Y$ is a measurable surjection, then $T_*\mu$ is the measure on $Y$ defined by $T_*\mu(B) = \mu(T^{-1}(B))$ for all measurable $B \subset Y$. If $k(x,dx')$ is Markov kernel on $X$, and moreover $T$ is bijective, then $T_*k(y, dy')$ is the Markov kernel on $Y$ defined by $T_*k(y, B) = k(T^{-1}(y), T^{-1}(B))$ for all $y \in Y$ and measurable $B \subset Y$.

If $\mu_1$ and $\mu_2$ are two measures on a measurable space $(S,\mathcal{S})$, the notation $\mu_1 \ll \mu_2$ means that $\mu_1$ is absolutely continuous with respect to $\mu_2$, and the notation $\mu_1 \gg \mu_2$ means that $\mu_2 \ll \mu_1$. In addition, we write $\mu_1 \perp \mu_2$ to indicate that $\mu_1$ and $\mu_2$ are mutually singular.

If $F$ is a Lebesgue measurable subset of $\mathbb{R}^d$, then $|F|$ denotes the $d$-dimensional Lebesgue measure of $F$. We may also write this as $|F|_{\mathbb{R}^d}$ if the dimension is not clear from context. In addition, $|\cdot|_F$ denotes the measure on $\mathbb{R}^d$ defined by $|G|_F = |G \cap F|$ for measurable subsets $G \subset \mathbb{R}^d$.

If $U \subset \mathbb{R}^d$, then the topological boundary, closure and interior of $U$, relative to the usual topology on $\mathbb{R}^d$, are denoted by $\partial U$, $\overline{U}$, and $\text{Int} U$, respectively.

\section{Main Results} \label{sec:mainresults}

\subsection{An alternating random walk in an elliptical region} \label{ssec:altwalk}

Let $q(u,du')$ be a Markov kernel on the interval $(-1,1)$. Consider the open elliptical region of the plane
\begin{equation} \label{eq:Edef}
E = \{(u,v) \in \mathbb{R}^2 : u^2 + (2\cos\gamma)uv + v^2 < 1\},
\end{equation}
where $\gamma \in (0,\pi)$. One axis of the ellipse $E$ lies along the diagonal $u = v$ in $\mathbb{R}^2$. The lengths of the major and minor axes are given, respectively, by 
$$
a, b = \csc\gamma \sqrt{2 \pm 2\cos \gamma}.
$$
Given $-\csc \gamma < u < \csc \gamma$, we define the interval
\begin{align}
E_u = \left\{v : -u \cos \gamma - \sqrt{1 - u^2 \sin^2 \gamma} < v < -u \cos \gamma + \sqrt{1 - u^2 \sin^2 \gamma}\right\}. 
\end{align}
For any point $P_0 = (u_0,v_0) \in E$, $E_{v_0} \times \{v_0\}$ is the horizontal chord of $E$ passing through $P_0$ and $\{u_0\} \times E_{u_0}$ is the vertical chord passing through $P_0$. 

Let $E^* := E \times \{\pm 1\}$, equipped with the product topology (the Borel topology on $E$ times the discrete topology on $\{\pm 1\}$). We define a Markov process on the state space $E^*$ as follows. Let $\ell_u : (-1,1) \to E_u$ be the linear increasing function which maps $(-1,1)$ onto $E_u$; explicitly, 
\begin{equation} \label{eq:Sform}
\ell_u(x) = -u\cos\gamma + x\sqrt{1 - u^2 \sin^2 \gamma}, \text{ for } x \in (-1,1).
\end{equation}
We let 
\begin{equation} \label{eq:hatqdef}
\hat{q}(x,dx') := q(-x,dx'), \text{ for } x \in (-1,1).
\end{equation}
We define $\{X_i, i \geq 0\}$ to be the Markov process on $E^*$ determined by the Markov kernel
\begin{equation} \label{eq:Qdef}
Q(u,v,s; du' dv' ds') = \begin{cases}

(\ell_v)_*\hat{q}(u, du') \otimes \delta_v(dv') \otimes \delta_{-s}(ds') & \text{ if } s = -1, \\
\delta_{u}(du') \otimes (\ell_u)_*\hat{q}(v, dv') \otimes \delta_{-s}(ds') & \text{ if } s = 1.
\end{cases}
\end{equation}

Put in words, the process we wish to describe by \eqref{eq:Qdef} is a random walk which alternates on each step between stepping along horizontal and vertical chords of $E$. If we identify the chord with $(-1,1)$ through an increasing linear rescaling, then law governing the next step of the process along that chord is given by $\hat{q}$. To make this into a Markov process, we introduce the extra coordinate $s \in \{\pm 1\}$ to keep track of whether the next step will be in the horizontal ($s = -1$) or the vertical ($s = 1$) direction. See also Figure \ref{fig:alternating}.

\begin{figure}
    \centering
    \includegraphics[width = 0.45\linewidth]{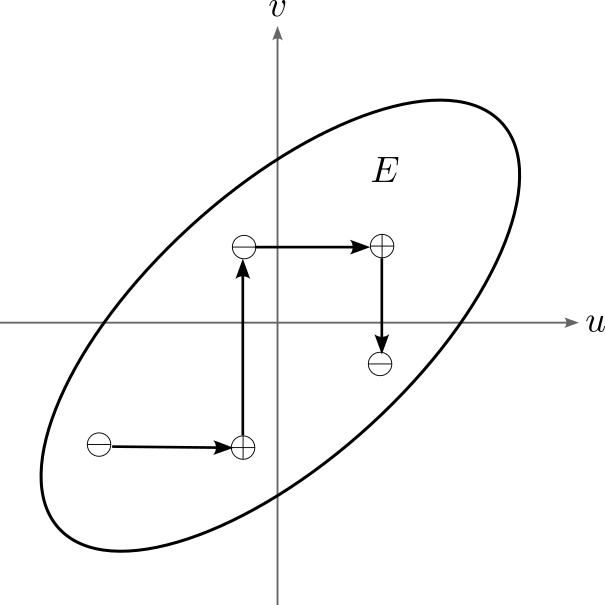}
    \caption{The alternating random walk alternates between taking steps along vertical and horizontal chords of the ellipse $E$. We make the process into a Markov process by introducing an extra ``sign'' variable $s \in \{\pm1\}$ to keep track of whether the next step will be in the horizontal or vertical direction.}
    \label{fig:alternating}
\end{figure}

\subsection{Invariant, reversible, and $\dag$-reversible measures}

We denote uniform measure on the interval $(-1,1)$ by $m^1(du)$ and we denote uniform measure on the state space $E^*$ by $m^2(du dv ds)$. That is, if $|\cdot|$ denotes Lebesgue measure, then $m^1(dx) = \frac{1}{2}\mathbf{1}_{(-1,1)}(x)|dx|$, and 
$$
m^2(du dv ds) = \frac{\mathbf{1}_E(u,v)}{2|E|}\left[|du dv| \otimes\delta_{1}(ds) + |du dv| \otimes\delta_{-1}(ds)\right].
$$

Suppose $m^1$ is a reversible measure for $q$, i.e. for any bounded and continuous functions $j,k$ on $(-1,1)$,
$$
\int_{(-1,1) \times (-1,1)} k(x)j(x')p(x,dx')m^1(dx) = \int_{(-1,1) \times (-1,1)} j(x)k(x')p^\dag(x,dx')m^1(dx).
$$
Then $\hat{q}$ and $Q$ satisfy a kind of time-reversal symmetry. More precisely, let $\mathcal{N} : (-1,1) \to (-1,1)$ be given by $\mathcal{N}(x) = -x$. If $p$ is any Markov kernel on $(-1,1)$, we let
$$
p^\dag(x, dx') = \mathcal{N}_*p(x, dx').
$$
We also let 
\begin{equation} \label{eq:Qtdef}
Q^\dag(u,v,s; du' dv' ds') = \begin{cases}

(\ell_v)_*\hat{q}^\dag(u, du') \otimes \delta_v(dv') \otimes \delta_{-s}(ds') & \text{ if } s = 1, \\
\delta_{u}(du') \otimes (\ell_u)_*\hat{q}^\dag(v, dv') \otimes \delta_{-s}(ds') & \text{ if } s = -1.
\end{cases}
\end{equation}
We say that $m^1$ is a \textit{$\dag$-reversible} measure for $p$ if for any bounded and continuous functions $j, k$ on $(-1,1)$, 
$$
\int_{(-1,1) \times (-1,1)} k(x)j(x')p(x,dx')m^1(dx) = \int_{(-1,1) \times (-1,1)} j(x)k(x')p^\dag(x,dx')m^1(dx).
$$
Similarly, we say that $m^2$ is a \textit{$\dag$-reversible} measure for $Q$ if for any bounded and continuous functions $f,g$ on $E^*$, 
\[
\begin{split}
& \int_{E^* \times E^*} g(u,v,s)f(u',v',s')Q(u,v,s;du' dv' ds')m^2(du dv ds) \\
& = \int_{E^* \times E^*} f(u,v,s)g(u',v',s')Q^\dag(u,v,s;du' dv' ds')m^2(du dv ds).
\end{split}
\]
Note that $\dag$-reversibility implies in both cases that $m^1$ and $m^2$, respectively, are invariant measures. In fact, we have the following.

\begin{proposition} \label{prop:ir}
The following statements are equivalent.

(i) $m^1$ is an invariant (resp. reversible) measure for $q$.

(ii) $m^1$ is an invariant (resp. $\dag$-reversible) measure for $\hat{q}$.

(iii) $m^2$ is an invariant (resp. $\dag$-reversible) measure for $Q$.
\end{proposition}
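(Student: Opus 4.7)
The plan is to prove the two equivalences (i) $\Leftrightarrow$ (ii) and (ii) $\Leftrightarrow$ (iii) separately, treating the invariance and reversibility/$\dag$-reversibility cases in parallel.

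For (i) $\Leftrightarrow$ (ii), the underlying observation is that $\mathcal{N}_* m^1 = m^1$. From the definitions, $\hat{q}(x,B) = q(-x,B)$, and unwinding the notation gives $\hat{q}^\dag(x,B) = \mathcal{N}_*\hat{q}(x,B) = \hat{q}(-x,-B) = q(x,-B)$. The substitution $x \mapsto -x$ in any integral against $m^1(dx)$ is therefore innocuous. For invariance, this substitution directly converts $\int \hat{q}(x,A)\, m^1(dx)$ into $\int q(x,A)\, m^1(dx)$. For reversibility, I will apply the same substitution to each side of the $\dag$-reversibility identity for $\hat{q}$; combined with the change of test function $k \mapsto k \circ \mathcal{N}$ (which is a bijection on bounded continuous functions and so does not lose generality), this converts the identity into the reversibility identity for $q$.

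For (ii) $\Leftrightarrow$ (iii), I would reduce $Q$ on $E^*$ to $\hat{q}$ on $(-1,1)$ via the linear chord parametrizations $\ell_v, \ell_u$. Split the $\dag$-reversibility identity for $Q$ by the pair $(s,s')$ into a horizontal-chord contribution (with $(s,s')=(-1,1)$ on the LHS and $(s,s')=(1,-1)$ on the RHS) and a vertical-chord contribution. For each, perform the change of variables $u = \ell_v(x)$, $u' = \ell_v(x')$ (respectively $v = \ell_u(y)$, $v' = \ell_u(y')$). The Jacobian $h(v) = \sqrt{1 - v^2 \sin^2 \gamma}$ combines with the density of $m^2$ in such a way that the chord $E_v$ with its uniform measure corresponds via $\ell_v$ to $(-1,1)$ with $m^1$, while $(\ell_v)_*\hat{q}$ pulls back to $\hat{q}$. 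Each of the two chord-type contributions thus becomes an integral over the ``frozen'' coordinate of a chord-wise identity for $\hat{q}$ against $m^1$.

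In the direction (ii) $\Rightarrow$ (iii), chord-wise $\dag$-reversibility of $\hat{q}$ makes both contributions balance, yielding the identity for $Q$. In the direction (iii) $\Rightarrow$ (ii), I would specialize to test functions $g(u,v,s) = \phi(v)G(u)\mathbf{1}_{s=-1}$ and $f(u,v,s) = \phi(v)F(u)\mathbf{1}_{s=1}$; this kills the vertical-chord contribution on both sides, leaving the $v$-integrated horizontal-chord identity weighted by $\phi(v)^2 h(v)$. Varying $\phi$ over a dense family in $C_b$ forces the horizontal-chord identity to hold for almost every $v$; then varying $G, F$ (whose compositions with $\ell_v$ cover all bounded continuous functions on $(-1,1)$) extracts $\dag$-reversibility of $\hat{q}$ with respect to $m^1$. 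The invariance case follows by the same strategy with a single test function. I expect the main obstacle to be the careful bookkeeping in the change of variables and the density argument in (iii) $\Rightarrow$ (ii); neither is deep, but both reward attention to signs, Jacobians, and the roles of the delta factors in $Q$ and $Q^\dag$.
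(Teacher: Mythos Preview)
Your proposal is correct and, for (i) $\Leftrightarrow$ (ii) and (ii) $\Rightarrow$ (iii), essentially the same as the paper's: the paper packages your chord change of variables into a lemma (Lemma~\ref{lem:m1m2qQ}) and then applies $\dag$-reversibility of $\hat q$ inside that formula.

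The one genuine difference is in (iii) $\Rightarrow$ (ii). The paper argues by contrapositive: assuming $m^1$ is \emph{not} $\dag$-reversible for $\hat q$, it picks witnesses $j_0,k_0$ and builds test functions
\[
f_0(u,v,s)=\tfrac12\,j_0(\ell_{v}^{-1}(u))\mathbf 1_{s=1}+\tfrac12\,j_0(\ell_{u}^{-1}(v))\mathbf 1_{s=-1},\qquad
g_0(u,v,s)=\tfrac12\,k_0(\ell_{u}^{-1}(v))\mathbf 1_{s=1}+\tfrac12\,k_0(\ell_{v}^{-1}(u))\mathbf 1_{s=-1},
\]
chosen so that in the symmetric chord formula the two sign contributions add to $j_0(x')$ and $k_0(x)$; one then reads off a strict inequality for $Q$. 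Your route is direct: you pick $g=\phi(v)G(u)\mathbf 1_{s=-1}$, $f=\phi(v)F(u)\mathbf 1_{s=1}$ to kill the vertical-chord terms on both sides, and then run a density argument in $\phi$ and in $(G,F)$. Both are valid. The paper's choice avoids any ``a.e.~$v$'' step and any appeal to density; your choice is perhaps more transparent about why only one chord direction matters, at the cost of a little extra measure-theoretic housekeeping (you should note that $\{\phi^2:\phi\in C_b\}$ spans all nonnegative continuous weights, and that a countable dense family of $(G,F)$ suffices to get a single full-measure set of $v$'s).
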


\noindent We prove this proposition in \S\ref{ssec:propsproofs}.

\subsection{Ergodicity of the alternating random walk} \label{ssec:mainresults}

The first of our main results characterizes the choices of $q$ which make $m^2$ is an ergodic measure for $Q$.

\begin{theorem} \label{thm:mainresult}
Assume that
\begin{enumerate}
\item[(i)] $m^1$ is a reversible measure for $q$, \label{cond:main1} 
\item[(ii)] $\gamma/\pi$ is irrational, and \label{cond:main2}
\item[(iii)] there exists a measure $\mu$ on $E^*$ which is ergodic for $Q$ and which is nonsingular with respect to $m^2$. \label{cond:main3}
\end{enumerate}
If $m^1$ is an ergodic measure for $q$, then $m^2$ is an ergodic measure for $Q$.

Moreover, if $\hat{q}^\dag = \hat{q}$, then the converse holds. More precisely if (i) holds and $m^2$ is an ergodic measure for $Q$, then $m^1$ is an ergodic measure for $q$.
\end{theorem}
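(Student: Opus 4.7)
The starting point is the affine change of coordinates $\Phi(u,v) := ((u+v)\cos(\gamma/2),(v-u)\sin(\gamma/2))$, which carries $E$ bijectively onto the unit disk $D = \{\xi^2 + \eta^2 < 1\}$, sends the horizontal chords of $E$ to parallel chords of $D$ in direction $\mathbf{e}_u = (\cos(\gamma/2),-\sin(\gamma/2))$, sends the vertical chords to parallel chords in direction $\mathbf{e}_v = (\cos(\gamma/2),\sin(\gamma/2))$, and makes $x \in (-1,1)$ coincide with the normalized Euclidean arclength along each chord. Because the rotation $R_\gamma$ on $D$ takes $\mathbf{e}_u$ to $\mathbf{e}_v$, the combined map $(\xi,\eta,s) \mapsto (R_\gamma(\xi,\eta),-s)$ is a symmetry of $Q$, and consequently $R_{2\gamma}$ (without any sign flip) commutes with $Q$ as a Markov kernel. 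Note also that $m^2$ transforms under $\Phi$ to a rotation-invariant measure on $D \times \{\pm 1\}$.

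For the forward direction I would let $\mathcal{H} \subset L^2(m^2)$ be the closed subspace of $Q$-invariant functions. Condition (ii) makes $\{2n\gamma \bmod 2\pi\}_n$ dense in $[0,2\pi)$, so by strong continuity of the rotation representation on $L^2(m^2)$ the $R_{2\gamma}$-invariance of $\mathcal{H}$ upgrades to full $SO(2)$-invariance. Decompose $\mathcal{H} = \bigoplus_{n \in \mathbb{Z}} \mathcal{H}_n$ into isotypic components, where $F \in \mathcal{H}_n$ takes the form $F(r,\theta,s) = e^{in\theta} f_n(r,s)$ in polar coordinates. For $n=0$, the $Q$-invariance equation reduces, via the chord foliation and the geometric identity $r^2 = d^2 + x^2(1 - d^2)$ along a chord at perpendicular distance $d$ from the origin, to an invariance equation for a one-dimensional Markov chain closely related to $\hat{q}$; ergodicity of $m^1$ under $q$ together with condition (iii) -- whose role is to rule out obstructions from singular ergodic components via the fact that any ergodic measure nonsingular with respect to the invariant $m^2$ must be absolutely continuous with respect to it -- forces $f_0$ to be constant. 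For $n \neq 0$, substituting $t = d^2$, $u = r^2$ into the same identity converts the $Q$-invariance equation into an integral equation of classical Abel form,
\[
\int_t^1 \frac{K_n(u)}{\sqrt{u-t}}\,du = 0 \quad\text{for a.e.\ } t \in (0,1),
\]
whose $L^1$-injectivity forces $K_n \equiv 0$ and hence $f_n \equiv 0$. Hence $\mathcal{H}$ consists of constants, proving $m^2$ is $Q$-ergodic.

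For the converse direction, the hypothesis $\hat{q}^\dag = \hat{q}$ says that $q(x,\cdot)$ is symmetric about $0$ for each $x$; combined with the reversibility in (i), one deduces $q(x,\cdot) = q(-x,\cdot)$, so that $q$ descends to a kernel $q_+$ on $[0,1)$ via $x \mapsto |x|$. It follows that every $q$-invariant set $B$ satisfies $B = -B$ up to $m^1$-null sets, so a nonergodic $q$ yields a nontrivial $q_+$-invariant $B_+ \subset [0,1)$. I would then lift $1_{B_+}$ to a nontrivial $Q$-invariant function on $E^*$ by exploiting both the rotational symmetry of the disk picture and the fact that, under $q(x,\cdot) = q(-x,\cdot)$, the sign of the chord-coordinate is dynamically irrelevant, so that the chord-coordinate magnitudes $|x|$ give coherent parameterizations across the two chord families through a common point. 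The resulting nontrivial $Q$-invariant function would contradict the assumed $Q$-ergodicity of $m^2$, and hence $q$ must be $m^1$-ergodic.

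The main obstacle I anticipate is the Abel reduction for the $n \neq 0$ components: one must carefully extract the contribution of the angular factor $e^{in\theta}$ along each chord, use the reversibility hypothesis (i) to put the resulting operator into a sufficiently symmetric form, and verify the $L^1$ integrability of $K_n$ needed to invoke Abel injectivity. A secondary subtlety is the coherent lift construction in the converse direction, where the symmetry $\hat{q}^\dag = \hat{q}$ is used precisely to reconcile the two chord-coordinate systems at a single point of $D$; without it, the lift fails to be well defined and the converse is expected to fail outright.
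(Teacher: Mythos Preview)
Your proposal shares the essential geometric ingredients with the paper --- the affine change to the unit disk, the observation that the two chord families differ by a rotation through $\gamma$, the resulting $R_{2\gamma}$--symmetry, and the Abel transform --- but the architecture is different, and your allocation of the tools contains a genuine gap.

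The paper does \emph{not} work in $L^2$ with a Fourier decomposition. Instead it works with $(m^2,Q)$--ergodic \emph{sets}: condition (iii) furnishes an ergodic set $F$ with $m^2(F)>0$, and the point is to show $m^2(F)=1$. The irrationality of $\gamma/\pi$ is used, not to decompose an invariant function into modes, but to show that the ergodic set $F$ is itself rotation--invariant (since $F$ and $\widehat R_{2k\gamma}(F)$ are both ergodic and cannot all be $m^2$--disjoint, they coincide; then density of the orbit gives full rotational symmetry). Once $F$ is radial, one shows that $F$ and its chordwise reflection $T_0(F)$ partition $E^*$; this forces $m^1([F^+]_v)=\tfrac12$ for a.e.\ $v$, which after the substitution $r^2=d^2+x^2(1-d^2)$ becomes an Abel equation $\mathcal A[\varphi_K](\xi)=\sqrt{\xi}$ with unique $L^1$ solution $\varphi_K\equiv\tfrac12$, contradicting that $\varphi_K$ is an indicator.

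In your scheme the $n\neq 0$ modes are vacuous once one passes to ergodic sets, so the ``main obstacle'' you anticipate is one the paper never faces. Conversely, the case you dismiss as easy --- the radial ($n=0$) component --- is exactly where the Abel argument is needed. For a radial $Q$--invariant function $f_0(r,s)$, writing $g_d^\pm(x)=f_0(\sqrt{d^2+x^2(1-d^2)},\pm)$ one finds $g_d^+ = \hat q\,g_d^-$ and $g_d^- = \hat q\,g_d^+$. Since the $g_d^\pm$ are even in $x$, $q$--ergodicity yields $g_d^++g_d^-=\mathrm{const}$, but it does \emph{not} kill $g_d^+-g_d^-$: that difference is an even $(-1)$--eigenfunction of $q$, which ergodicity alone does not exclude. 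What does exclude it is precisely the Abel identity (integrate $q k=-k$ against $m^1$ to get $\int_{-1}^1\phi(\sqrt{d^2+x^2(1-d^2)})\,dx=0$ for all $d$, then change variables). This is, up to the sets--versus--functions packaging, the paper's argument; your proposal puts it in the wrong box.

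For the converse, your reading of $\hat q^\dag=\hat q$ is not quite right: unwinding the definitions gives $q(x,-A)=q(-x,A)$, not that $q(x,\cdot)$ is symmetric about $0$, and even together with reversibility this does not force $q(x,\cdot)=q(-x,\cdot)$. The paper bypasses any such reduction: from a $(m^1,q)$--invariant $J$ it directly builds
\[
F^+=\{(u,v,1):\ell_u^{-1}(v)\in -J\},\qquad F^-=\{(u,v,-1):\ell_u^{-1}(v)\in J\},
\]
and checks, using $\hat q^\dag=\hat q$ only to identify $Q^\dag$ with $Q$, that $F=F^+\cup F^-$ is $(m^2,Q)$--invariant; ergodicity of $m^2$ then forces $m^1(J)=1$.
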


Condition (iii) is not easy to verify directly. An equivalent statement is that there exists some $(m^2,Q)$-ergodic subset $F \subset E^*$ such that $m^2(F) > 0$, but even this may be hard to check. (See \S\ref{sssec:dual} for the definition of an ergodic subset.) The next theorem gives us a more convenient condition implying (iii).

\begin{theorem} \label{thm:qNS}
(i) If $m^1$ is a reversible measure for $q$, and for almost every $x \in (-1,1)$, $q(x, dx')$ is nonsingular with respect to $m^1(dx')$, then there exists a measure $\mu$ on $E^*$ which is ergodic for $Q$ and which is absolutely continuous with respect to $m^2$.

(ii) Moreover, if $q(x,dx')$ is nonsingular with respect to $m^1(dx')$ for all $x \in (-1,1)$, then every invariant measure for $Q$ is nonsingular with respect to $m^2$.
\end{theorem}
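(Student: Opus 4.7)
The key observation for both parts is that although $Q$ itself is supported on $1$-dimensional chords of $E$, the two-step kernel $Q^2$ preserves the sign coordinate $s$ and has $2$-dimensional support. Applying the chain rule and the pushforward formula for $\ell_v$ to \eqref{eq:Qdef}, whenever $\hat q(u,\cdot) \ll m^1$ and $\hat q(v,\cdot) \ll m^1$ one obtains that $Q^2((u,v,s);\cdot)$ has a density with respect to $2$-dimensional Lebesgue measure on $E \times \{s\}$; in particular, $Q^2((u,v,s);\cdot) \ll m^2$.

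Part (ii) is then immediate: the hypothesis gives $Q^2(x,\cdot) \ll m^2$ for every $x \in E^*$, so any $Q$-invariant measure $\nu$ satisfies $\nu(A) = \int Q^2(x,A)\,\nu(dx) = 0$ whenever $m^2(A) = 0$. Hence $\nu \ll m^2$, which is strictly stronger than being nonsingular with respect to $m^2$.

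For part (i), Proposition \ref{prop:ir} shows that $m^2$ is invariant for $Q$, and the a.e.\ density hypothesis yields $Q^2(x,\cdot) \ll m^2$ for $x$ in an $m^2$-full set $X_0 \subset E^*$. Since $Q^2$ preserves the sign coordinate, I would work with its restriction to $E \times \{-1\}$, which is a Markov chain with density transitions on $X_0 \cap (E \times \{-1\})$ and invariant probability measure $m^2_- := 2 m^2|_{E \times \{-1\}}$. In the ergodic decomposition $m^2_- = \int \mu_\omega\,d\pi(\omega)$, each $\mu_\omega$ is concentrated on $X_0$ for $\pi$-a.e.\ $\omega$ (since $X_0$ has full $m^2_-$-measure), and then invariance combined with the density formula for $Q^2$ forces $\mu_\omega$ itself to possess a density with respect to $m^2$ through
\[
\mu_\omega(A) \;=\; \int Q^2(x,A)\,\mu_\omega(dx) \;=\; \int_A \Bigl(\int p(x,y)\,\mu_\omega(dx)\Bigr) m^2(dy),
\]
where $p$ denotes the density of $Q^2$. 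Picking such a $\tilde\mu_- := \mu_\omega$ and setting $\mu := \tfrac12(\tilde\mu_- + \tilde\mu_- Q)$, I would verify (a) $\mu Q = \mu$ since $\tilde\mu_- Q^2 = \tilde\mu_-$; (b) $\mu \ll m^2$, since the pushforward $\tilde\mu_- Q$ gains a Lebesgue density on $E \times \{+1\}$ by the same chain-rule computation (integration against $\tilde\mu_-$ smears out the chord singularity of $Q(x,\cdot)$); and (c) $\mu$ is $Q$-ergodic, since any $Q$-invariant set $C \subset E^*$ splits as $C_- \sqcup C_+$ with both halves being $Q^2$-invariant on their sign-slices, $Q$-invariance forces $\tilde\mu_-(C_-) = (\tilde\mu_- Q)(C_+)$, and this value lies in $\{0,1\}$ by $Q^2$-ergodicity of $\tilde\mu_-$.

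The main obstacle in part (i) is that the ergodic components of an abstract invariant measure can in general be $m^2$-null, which would preclude the existence of an ergodic $\mu \ll m^2$. The density-kernel computation above is precisely what rules this out under the a.e.\ hypothesis; the rest is bookkeeping around the sign coordinate and the chain rule for Markov kernels.
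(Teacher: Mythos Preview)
Your argument conflates ``nonsingular'' with ``absolutely continuous.'' In this paper (see the notation section and the paper's own proof of part (ii)), $q(x,\cdot)$ being nonsingular with respect to $m^1$ means only that $q(x,\cdot) \not\perp m^1$, i.e., the Lebesgue decomposition of $q(x,\cdot)$ has a nonzero absolutely continuous part; it does \emph{not} mean $q(x,\cdot) \ll m^1$. Consequently your central claim that $Q^2((u,v,s);\cdot) \ll m^2$ is false under the stated hypothesis. For a concrete counterexample, take $q(x,\cdot) = \tfrac12\delta_{-x} + \tfrac12 m^1$, which is reversible with respect to $m^1$ and nonsingular for every $x$; then $\hat q(x,\cdot) = \tfrac12\delta_x + \tfrac12 m^1$, and one checks directly from \eqref{eq:Qdef} that $Q^2((u,v,s);\cdot)$ places mass at least $1/4$ on the atom $\delta_{(u,v,s)}$, so $Q^2((u,v,s);\cdot) \not\ll m^2$.

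This breaks both parts. In (ii) you cannot conclude $\nu \ll m^2$; all that is available is that $Q^2(w,\cdot)$ is nonsingular with respect to $m^2$ for every $w$, and the paper uses only this to derive the (correctly weaker) conclusion that $\nu \not\perp m^2$. In (i), your density identity $\mu_\omega(A) = \int_A\bigl(\int p(x,y)\,\mu_\omega(dx)\bigr)\,m^2(dy)$ no longer holds, because $Q^2(x,\cdot)$ may carry a singular piece, so the ergodic-decomposition route does not force $\mu_\omega \ll m^2$. The paper instead argues by contradiction: assuming every $(m^2,Q)$-ergodic set is $m^2$-null, a combinatorial lemma (Lemma~\ref{lem:tunnel}) partitions $E^*$ into $2^{2k}$ invariant sets each of $m^2$-measure at most $2^{-k}$; since the support of the absolutely continuous part of $Q^2(w,\cdot)$ must lie (up to null sets) in whichever piece contains $w$, this forces that part to have mass at most $2^{-k}$ for $m^2$-a.e.\ $w$ and every $k$, contradicting the nonsingularity of $Q^2(w,\cdot)$ established in Step~1.
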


Note that the hypothesis of Theorem \ref{thm:mainresult} is stated in terms of $q$, but in \eqref{eq:Qdef} the Markov kernel $Q$ is defined in terms of $\hat{q}(x,dx') = q(-x,dx')$.
Although the reason for our introducing two different Markov kernels may right now seem opaque, when we state the corresponding result for the disk-in-strip system, the analogues of $q$ and $\hat{q}$ will have straightforward physical interpretations. See Remark \ref{rem:signs}.

In Theorem \ref{thm:mainresult}, we do not know if assumption (ii) can be weakened. However, the following simple counterexample shows that we cannot allow $\gamma = \pi/2$, corresponding to the case where $E$ is the open unit disk. Let $q$ be the Markov kernel on $(-1,1)$ defined by  
$$
q(x,dx') = \begin{cases}
|dx'|_{(0,1)} & \text{ if } x \in (-1,0), \\
|dx'|_{(-1,0)} & \text{ if } x \in (0,1), \\
m^1(dx') & \text{ if } x = 0.
\end{cases}
$$
One may check that $m^1$ is a reversible, ergodic measure for $q$. We observe that 
$$
\hat{q}(x,dx') = \begin{cases}
|dx'|_{(-1,0)} & \text{ if } x \in (-1,0), \\
|dx'|_{(0,1)} & \text{ if } x \in (0,1), \\
m^1(dx') & \text{ if } x = 0.
\end{cases}
$$
Let $R_1, R_2, R_3, R_4$ denote the restrictions of the open unit disk to the first, second, third, and fourth quadrants, respectively, of $\mathbb{R}^2$. Then the regions $R_i \times \{-1,1\} \subset E^*$, for $i \in \{1,2,3,4\}$, are disjoint invariant sets for the Markov chain determined by $Q$, and thus $m^2$ is not ergodic for $Q$. On the other hand, one may also show that the restrictions of $m^2$ to the regions $R_i \times \{-1,1\}$ are ergodic measures for $Q$.

The question of whether condition (iii) can be weakened is likewise open. Conditions (ii) and (iii) interact in an important way in the proof of Theorem \ref{thm:mainresult}, coming to together in an application of the following fact: Any non-null subset of the circle which is invariant under an irrational rotation must be a full-measure subset of the circle. This suggests that any weakening of both conditions (ii) and (iii) in Theorem \ref{thm:mainresult} would require a considerably different approach than ours.

Note that by Theorem \ref{thm:qNS}(ii), if the hypothesis of Theorem \ref{thm:mainresult} holds and moreover $q(x,dx')$ is nonsingular for \textit{every} $x \in (-1,1)$, then $m^2$ is uniquely ergodic. In general, however, the question of unique ergodicity is more delicate. See Remarks \ref{rem:uniqueerg0} and \ref{rem:uniqueerg} for more discussion of this issue. 

We prove Theorem \ref{thm:mainresult} in \S\ref{ssec:main1proof}, and we prove Theorem \ref{thm:qNS} in \S\ref{ssec:main2proof}.

\subsection{Application: ergodicity of the disk-in-strip billiard system} \label{ssec:ergodic}

To describe analogous ergodicity results for the disk-in-strip billiard system, we must first define the collision dynamics more precisely. We introduce a special system of spherical coordinates for the kinetic energy unit sphere $\mathbb{S}^2$, defined by \eqref{eq:kinspheredef}. Define vectors in $\mathbb{R}^3$,
\begin{equation} \label{eq:orthonormal}
\chi = \left(\frac{2}{J + m}\right)^{1/2}(1,-1,0), \quad n_1 = \left( \frac{2}{J^{-1} + m^{-1}} \right)^{1/2}(J^{-1}, m^{-1}, 0), \quad n_2 = \left(\frac{2}{m} \right)^{1/2}(0,0,1).
\end{equation}
Each of these vectors has unit kinetic energy. The triple $(\chi,n_1,n_2)$ forms a positively oriented orthonormal frame with respect to the inner product $\langle \cdot, \cdot \rangle$, defined by \eqref{eq:kinnerproduct}. We define a spherical coordinate mapping $G : (0,\pi) \times (0,\pi) \to \mathbb{S}^2_+$ by 
\begin{equation} \label{eq:spherecoord}
G(\theta,\psi) = (\cos\psi)\chi + (\cos\theta\sin\psi)n_1 + (\sin\theta\sin\psi)n_2.
\end{equation}

Recall the collision law $K_+$, which determines the Markov kernel $K$ for the post-collision velocity process on $\hat{\mathbb{S}}^2$ through equation \eqref{eq:twosides}. We take as axiomatic the assumption that $K_+$ has the following $(\theta,\psi)$-coordinate representation:
\begin{equation} \label{eq:roughcolform0}
(G^{-1})_*K_+(\theta,\psi; d\theta' d\psi') = P(\theta, d\theta') \delta_{\pi -\psi}(d\psi'), 
\end{equation}
where $P$ is allowed to be any Markov kernel on $(0,\pi)$ which is reversible with respect to the following probability measure on $(0,\pi)$:
\begin{equation} \label{eq:defLambda1}
\Lambda^1(d\theta) = \frac{1}{2}\sin\theta d\theta.
\end{equation}
In the future, when we work with spherical coordinate representations, we will often leave out the pre-composition with $G$ in our expressions.

A consequence of the fact that $P$ preserves the measure $\Lambda^1$ is that the Markov kernel $K$ for the velocity process preserves the following measure on $\mathbb{S}^2$:
\begin{equation} \label{eq:defLambda}
\Lambda^2(dv) = \frac{1}{2\pi}|\langle v, n_2 \rangle|\sigma^2(dv),
\end{equation}
where $\sigma^2$ is surface measure on $\mathbb{S}^2$, induced by the metric $\langle \cdot, \cdot \rangle$. The normalization factor guarantees that $\Lambda^2$ is a probability measure. We may extend the spherical coordinates $(\theta,\psi)$ to the whole sphere minus the poles by extending the domain to $(0,2\pi) \times (0,\pi)$. We let $\widetilde{G} : (0,2\pi) \times (0,\pi) \to \mathbb{S}^2 \smallsetminus \{(v_0,v_1,v_2) : v_0 = v_1 = 0\}$ denote the corresponding spherical coordinate map, defined by the same formula \eqref{eq:spherecoord}. The resulting coordinate representation of $\Lambda^2$ is given by
\begin{equation} \label{eq:lambda2coordrep}
\Lambda^2(d\theta d\psi) = \frac{1}{2\pi}|\sin\theta| \sin^2\psi d\theta d\psi, \quad (\theta,\psi) \in (0,2\pi) \times (0,\pi). 
\end{equation}
If we restrict the measure to the upper hemisphere $\mathbb{S}^2_+$, then this expression can also be written as 
\begin{equation}
\Lambda^2(d\theta d\psi) = \frac{1}{\pi} \Lambda^1(d\theta) \times \sin^2\psi d\psi, \quad (\theta,\psi) \in (0,\pi) \times (0,\pi),
\end{equation}
where $\Lambda^1$ is defined by \eqref{eq:defLambda1}.

We will see in Theorem \ref{thm:colchar} that the class of Markov kernels $K_+$ which can be obtained through a certain ``physically realistic'' limiting procedure is characterized by the property that $K_+$ has the coordinate representation \eqref{eq:roughcolform0}. This limiting procedure is meant to describe collisions between a freely moving disk and a fixed wall whose surfaces have been equipped with microscopic rough features, as the scale of these features tends to zero. The form \eqref{eq:roughcolform0} of a rough collision law is quite directly related to the choice of microstructures with which we equip the bodies, as we will explain in \S\ref{ssec:rough}. The Markov kernel $P(\theta,d\theta')$ describes the way in which a point particle reflects from a rough surface in $\mathbb{R}^2$, by giving the joint distribution of the angle of approach $\theta$ and the angle of exit $\theta'$. For an appropriate choice of rough microstructure on the disk, the rough microstructure which gives rise to $P$ may be obtained by foreshortening the original rough microstructure on the lower boundary of the strip by a factor of $(1 + mJ^{-1})^{-1/2}$.

We now describe the relationship between the alternating random walk and the velocity process for the disk-in-strip system. Recall the 180 degree rotation $R$, given by 
\begin{equation} \label{eq:180degree}
R(v_0,v_1,v_2) = (v_0, -v_1, -v_2).
\end{equation}
Let 
\begin{equation} \label{eq:gammadef}
\gamma = \arccos(\langle n_1, R(n_1) \rangle) = \arccos \left( \frac{J^{-1} - m^{-1}}{J^{-1} + m^{-1}} \right).
\end{equation}
Consider the homeomorphism $\Phi^+ : E \to \mathbb{S}^2_+$ given by 
$$
\Phi^+(u,v) = u R(n_1) + v n_1 + (1 - u^2 + v^2)^{1/2}n_2.
$$
To see that the image of $\Phi$ is indeed $\mathbb{S}^2_+$, note that 
\begin{equation} \label{eq:todisk}
\|u R(n_1) + v n_1\|^2 = u^2 + 2\langle R(n_1),n_1 \rangle uv + v^2 = u^2 + 2uv\cos\gamma + v^2.
\end{equation}
Since $n_1$ and $R(n_1)$ are linearly independent, $(u,v) \mapsto u R(n_1) + v n_1$ is a bijective linear map from $\mathbb{R}^2$ to the plane $\mathbf{P} := \{(v_0,v_1,v_2) : v_2 = 0\}$. The image of $E$ under this mapping is therefore the open unit disk in $\mathbf{P}$, in view of \eqref{eq:todisk} and the definition \eqref{eq:Edef} of $E$. Hence, the image of $\Phi^+$ is $\mathbb{S}^2_+$. 

Similarly, the mapping $\Phi^- : E \to \mathbb{S}^2_-$, given by 
$$
\Phi^-(u,v) = u R(n_1) + v n_1 - (1 - u^2 + v^2)^{1/2}n_2,
$$
is a homeomorphism. We define a homeomorphism $\Phi : E^* \to \hat{\mathbb{S}}^2$ by 
$$
\Phi(u,v,s) = \begin{cases}
\Phi^-(u,v) & \text{ if } s = -1, \\
\Phi^+(u,v) & \text{ if } s = 1.
\end{cases}
$$
We let $H : \hat{\mathbb{S}}^2 \to E^*$ denote the inverse of $\Phi$. Recall the Markov kernel $K$ on $\hat{\mathbb{S}}^2$, as well as the Markov kernel $P$ on $(0,\pi)$, appearing in the coordinate representation \eqref{eq:roughcolform0} of $K_+$. Noting that $h := \cos : (0,\pi) \to (-1,1)$ is a homeomorphism, we define a Markov kernel $q(x, dx')$ on $(-1,1)$ by 
\begin{equation} \label{eq:qtoP}
q(x,dx') = h_*P(x, dx').
\end{equation}
Let $\hat{q}$ and $Q$ be defined as in \eqref{eq:hatqdef} and \eqref{eq:Qdef}, respectively, with $q$ defined as above. We have 
\begin{lemma} \label{lem:iso}
Let $q$ be defined as in \eqref{eq:qtoP}, and let $\hat{q}$ and $Q$ be defined as in \eqref{eq:hatqdef} and \eqref{eq:Qdef}, respectively. Then
\begin{enumerate} 
\item[(i)] uniform measure $m^1(dx)$ on $(-1,1)$ is reversible for $q$; and
\item[(ii)] $Q = H_*K$.
\end{enumerate}
\end{lemma}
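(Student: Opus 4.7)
Part (i) follows from the fact that reversibility of a Markov kernel with respect to a measure is preserved under pushforward by a measurable bijection. The map $h = \cos : (0,\pi) \to (-1,1)$ is a homeomorphism, and the change of variables $\sin\theta\,d\theta = -dx$ immediately gives $h_*\Lambda^1 = m^1$. Since $P$ is reversible with respect to $\Lambda^1$ by property (P) from Section~\ref{ssec:phys}, it follows that $q = h_*P$ is reversible with respect to $m^1 = h_*\Lambda^1$.

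For part (ii), my plan is to verify $Q = H_*K$ by direct coordinate computation, treating the two cases $s = -1$ and $s = 1$ in turn. The algebraic core is the identity $R(n_1) = \sin\gamma\cdot\chi + \cos\gamma\cdot n_1$, which one checks by computing the kinetic inner products $\langle R(n_1),\chi\rangle$ and $\langle R(n_1),n_1\rangle$ directly from the definitions \eqref{eq:orthonormal} and \eqref{eq:gammadef}. Substituting into $\Phi^{\pm}(u,v) = u R(n_1) + v n_1 \pm (\cdots)n_2$ and rewriting in the orthonormal frame $(\chi, n_1, n_2)$, one reads off the spherical coordinates of the input to $K_+$: $\cos\psi$ is affine in a single one of the variables $u, v$, while $\cos\theta \sin\psi$ is affine in both. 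The coordinate representation \eqref{eq:roughcolform0} of $K_+$ then says that the deterministic factor $\delta_{\pi-\psi}(d\psi')$ preserves the first coordinate (up to the sign flip induced by the $-v$ in \eqref{eq:twosides}), while the factor $P(\theta, d\theta')$ randomizes the other coordinate along the chord of $E$ on which the first is held fixed.

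Translating back to $E$ via $(\Phi^\pm)^{-1}$ and invoking $q = h_*P$ from part (i), the distribution of the randomized coordinate acquires the form of the pushforward by the affine rescaling $\ell_u$ (or $\ell_v$) of $\hat{q}(\cdot, dx')$, matching the definition \eqref{eq:Qdef} of $Q$; the flip $\hat{q}(x,dx') = q(-x,dx')$ relative to $q$ is accounted for precisely by the sign inversion in the input to $K_+$, which reverses the sign of the argument of $P$ under the identification through $\cos$. The main bookkeeping obstacle will be the case $v \in \mathbb{S}^2_+$ (i.e.\ $s = 1$), where $K$ involves an additional pushforward by $R$ in \eqref{eq:twosides}; this can be handled by exploiting the action of $R$ on the frame $(\chi, n_1, n_2)$ (one computes $R(\chi) = -\cos\gamma\cdot\chi + \sin\gamma\cdot n_1$ and $R(n_2) = -n_2$), which effectively interchanges the roles of the two coordinates and makes the $s=1$ computation mirror the $s = -1$ one with $u$ and $v$ swapped. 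Matching the resulting distributions against both branches of \eqref{eq:Qdef} then completes the proof.
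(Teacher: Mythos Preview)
Your proposal is correct and follows essentially the same approach as the paper: both reduce part (ii) to a coordinate computation via the identity $R(n_1) = (\sin\gamma)\,\chi + (\cos\gamma)\,n_1$, from which one reads off $u = \csc\gamma\cos\psi$ and $v = \ell_u(\cos\theta)$ and then pushes $P$ through $h = \cos$ to recover $\hat q$. The only difference is organizational---the paper first establishes the symmetries $R_*K = K$ and $\widetilde R_*Q = Q$ (with $\widetilde R(u,v,s)=(v,u,-s)$) so as to reduce to the single case $s=-1$, whereas you plan to handle both cases directly using the action of $R$ on the frame---but this is the same computation, packaged differently.
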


It follows from Lemma \ref{lem:iso} and Proposition \ref{prop:ir} that $Q$ preserves the measure $m^2(du dv)$ on $E^*$. Since $H$ and $h$ are homeomorphisms, $h$ induces an isomorphism of the measure preserving systems $((0,\pi), P, \Lambda^1)$ and $((-1,1), q, m^1)$, and by the lemma, $H$ induces an isomorphism of the measure preserving systems $(\hat{\mathbb{S}}^2, K, \Lambda^2)$ and $(E^*, Q, m^2)$. 

Let $P^\dag$ be defined 
$$
P^\dag := (h^{-1})^*q^\dag.
$$
Equivalently, if $\widetilde{\mathcal{N}}(\theta) = \pi - \theta$ for $\theta \in (0,\pi)$, then $P^\dag = \widetilde{\mathcal{N}}_*P$. Lemma \ref{lem:iso} and Theorem \ref{thm:mainresult} immediately imply the following characterization of the collision laws $K$ for which $\Lambda^2$ is an ergodic measure.

\begin{theorem} \label{thm:ergcol}
Let $\gamma$ be given by \eqref{eq:gammadef}, and assume that $\gamma/\pi$ is irrational. Also assume there exists some ergodic measure for $Q$ which is nonsingular with respect to $\Lambda^2$. If $\Lambda^1$ is an ergodic measure for $P$, then $\Lambda^2$ is an ergodic measure for $K$.

Moreover, if $P = P^\dag$, the converse statement holds. That is, if $\Lambda^2$ is an ergodic measure for $K$, then $\Lambda^1$ is an ergodic measure for $P$.
\end{theorem}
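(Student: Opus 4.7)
The plan is to reduce Theorem \ref{thm:ergcol} directly to Theorem \ref{thm:mainresult} using the isomorphisms of measure-preserving systems already established in the paragraph immediately after Lemma \ref{lem:iso}. The homeomorphism $H:\hat{\mathbb{S}}^2\to E^*$ identifies $(\hat{\mathbb{S}}^2,K,\Lambda^2)$ with $(E^*,Q,m^2)$ in the sense that $Q=H_*K$ and $m^2=H_*\Lambda^2$; analogously $h=\cos$ identifies $((0,\pi),P,\Lambda^1)$ with $((-1,1),q,m^1)$. In particular, the map $\mu\mapsto H_*\mu$ is a bijection between $K$-invariant measures and $Q$-invariant measures which carries ergodic measures to ergodic measures and preserves absolute continuity / mutual singularity classes with respect to $\Lambda^2$ and $m^2$; the analogous statement holds for $h$, $P$, $q$, $\Lambda^1$, $m^1$.

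With these identifications in hand, I would simply check the three hypotheses of Theorem \ref{thm:mainresult}. Condition (i) ($m^1$ reversible for $q$) is Lemma \ref{lem:iso}(i). Condition (ii) ($\gamma/\pi$ irrational) is assumed in Theorem \ref{thm:ergcol}. For condition (iii), the assumed ergodic, $\Lambda^2$-nonsingular measure for $K$ pushes forward under $H$ to an ergodic, $m^2$-nonsingular measure for $Q$. Hence the forward direction of Theorem \ref{thm:mainresult} applies, yielding: $\Lambda^1$ ergodic for $P$ $\iff$ $m^1$ ergodic for $q$ $\implies$ $m^2$ ergodic for $Q$ $\iff$ $\Lambda^2$ ergodic for $K$, which is the forward assertion of Theorem \ref{thm:ergcol}.

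For the converse, assume $P=P^\dag$. The one additional piece of bookkeeping is to verify the hypothesis $\hat{q}^\dag=\hat{q}$ required by the converse half of Theorem \ref{thm:mainresult}. From the definition of the pushforward of a Markov kernel by the involution $\mathcal{N}(x)=-x$, one has $\hat{q}^\dag(x,B)=\hat{q}(-x,-B)=q(x,-B)$ while $\hat{q}(x,B)=q(-x,B)$, so $\hat{q}^\dag=\hat{q}$ is equivalent to $q^\dag=q$. Since $h$ conjugates the sign-flip $x\mapsto -x$ on $(-1,1)$ with $\widetilde{\mathcal{N}}(\theta)=\pi-\theta$ on $(0,\pi)$, and $q=h_*P$, a short computation gives $q^\dag = h_*P^\dag$, hence $P=P^\dag$ yields $q=q^\dag$ and therefore $\hat{q}^\dag=\hat{q}$. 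The converse half of Theorem \ref{thm:mainresult} then gives the converse assertion of Theorem \ref{thm:ergcol}.

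No step of this argument presents any essential difficulty; the proof is purely a matter of transporting hypotheses along the two homeomorphisms $H$ and $h$ and invoking Theorem \ref{thm:mainresult}. The only place that merits any care at all is tracking the three involutions (sign-flip on $(-1,1)$, $\theta\mapsto \pi-\theta$ on $(0,\pi)$, and the final-coordinate flip on $E^*$) through the pushforwards, which is done once and for all in the chain $P=P^\dag\iff q=q^\dag\iff \hat q^\dag=\hat q$ above.
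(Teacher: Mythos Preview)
Your proposal is correct and follows precisely the approach the paper intends: the paper itself says just before the theorem that ``Lemma \ref{lem:iso} and Theorem \ref{thm:mainresult} immediately imply'' Theorem \ref{thm:ergcol}, and you have simply written out those implications, including the only nontrivial bookkeeping step of translating $P=P^\dag$ into $\hat q^\dag=\hat q$ via $h\circ\widetilde{\mathcal N}=\mathcal N\circ h$.
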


Note that there is no need to state the condition that $P$ is reversible with respect to $\Lambda^1$, since this condition is already contained in the definition of $K$.

One might wonder if Theorem \ref{thm:ergcol} really makes the matter of verifying whether $\Lambda^2$ is ergodic much easier, since we apparently must still verify whether the measure $\Lambda^1$ is ergodic for $P$. In fact, we will see in \S\ref{ssec:examples} that this theorem, combined with Remark \ref{rem:nonsing} and the main results of \cite{rudzis2022}, allows us to construct explicit examples of Markov kernels $K$ for which $\Lambda^2$ is ergodic. (Here ``explicit'' means that we can describe the microstructure which gives rise to the collision law explicitly, and we can write down some explicit formula for $K$.)

\begin{remark} \label{rem:nonsing} \normalfont
It follows from Theorem 1.4 that if, for almost every $\theta \in (0,\pi)$, $P(\theta, \cdot)$ is nonsingular with respect to $\Lambda^1$, then there exists some ergodic measure for $K$ which is absolutely continuous with respect to $\Lambda^2$. 

Moreover, if for every $\theta \in (0,\pi)$, $P(\theta, \cdot)$ is nonsingular with respect to $\Lambda^1$, then every invariant measure for $K$ is nonsingular with respect to $\Lambda^2$. Thus, if $\Lambda^2$ is ergodic, then it is uniquely ergodic.
\end{remark}

\begin{remark} \label{rem:signs} \normalfont
The Markov kernel $P(\theta,d\theta')$ on $(0,\pi)$ is supposed to define reflections of a point particle from a certain foreshortened rough microstructure. The angle of the \textit{negative} of the incoming velocity with the horizontal direction is given by $\theta$, and the angle of the outgoing velocity with the horizontal is given by $\theta'$. See \S\ref{sssec:charrough} for further discussion. On the other hand, we do not change the sign of the pre-collision velocity in the definition of the Markov kernel $K(v,dv')$ for the velocity process. In view of the isomorphism of measure preserving systems given by Lemma \ref{lem:iso}, this explains from a physical standpoint the ``mismatch'' of $q$ and $\hat{q}$ in the hypothesis and conclusion, respectively, of Theorem \ref{thm:mainresult}. 
\end{remark}

\begin{remark} \label{rem:uniqueerg0} \normalfont
One may also ask under what conditions $\Lambda^2$ is the unique ergodic measure for $K$. If the hypothesis of Theorem \ref{thm:ergcol} holds, and for every $\theta \in (0,\pi)$, $P(\theta,\cdot)$ is nonsingular with respect to $\Lambda^1$, then unique ergodicity is immediate from the theorem and Remark \ref{rem:nonsing}. (Recall that any two ergodic measures will be mutually singular.) More generally, however, the problem is more difficult and may not even be well-defined. Indeed, if $K$ is obtained from a rough microstructure (that is, through the limiting procedure described in \S\ref{sec:billiards}), then $K$ is only defined up to a $\Lambda^2$-null set. For more discussion of this issue, see Remark \ref{rem:uniqueerg}. 
\end{remark}

\subsection{Future work} \label{ssec:future}

The results of this paper can be generalized in a few different directions. One can consider analogues of the alternating random walk in general domains $D \subset \mathbb{R}^d$, where each step is taken in a coordinate direction along some chord of $D$. As we saw in condition (ii) of Theorem \ref{thm:mainresult}, to establish ergodicity, one apparently cannot avoid taking into account the arithmetic properties of the domain, at least if the class of transition kernels $q$ is sufficiently general. However, domain-dependent properties may become less important with a more restricted class of transition kernels $q$. Since finding examples of transition kernels $q$ which give rise to non-ergodic dynamics depends heavily on the symmetries of the domain, this suggests that, generically, an ergodic transition kernel $q$ should make the alternating random walk ergodic, provided that the domain $D$ is ``not very symmetric.'' 

Also of interest, both for the elliptical domain investigated here and for more general domains, is the problem of finding conditions on the transition kernel $q$ which give rates of convergence to stationarity for the alternating random walk. Depending on how general one makes the class of domains and how restrictive one makes the class of transition kernels, these types of questions interpolate to analyses of the coordinate hit-and-run algorithm, mentioned in \S\ref{ssec:relatedwork}.

One can generalize the billiard system considered in this paper to a system in which a rough disk moves in some bounded polygonal domain and undergoes collisions according to the collision law \eqref{eq:roughdecomp}. If the walls of the domain are not parallel, the analysis is more complex, since one must keep track of spatial coordinates, as well, in order to represent the system as a Markov process. Also of interest are billiard systems where there are multiple rough disks; as well as higher dimensional systems, where instead of disks, the particles are, say, hard spheres. Since \cite{rudzis2022} only deals with collisions in the two-dimensions, the appropriate generalization of the class of collision laws \eqref{eq:roughdecomp} to higher dimensions is not so clear. In view of the heuristic argument at the end of \S\ref{ssec:phys}, one reasonable approach would be to restrict the analysis to Markov kernels which conserve quantities analogous to \eqref{eq:conserved1} and \eqref{eq:conserved2}, coming from ``rolling collisions'' in higher dimensions. Such systems would again fall under the umbrella of semi-deterministic processes discussed in the introduction.

\section{Disk-in-strip billiard system} \label{sec:billiards}

This section provides a billiard theoretic justification for the class of collision dynamics considered in this paper. In addition, by connecting the nontrivial factor $P(\theta,d\theta')$ in the coordinate representation \eqref{eq:roughcolform0} to the choice of microstructure on the surfaces of the disk and the walls of the strip, we are able to construct explicit examples of microstructures which produce ergodic dynamics. The relationship between $P$ and the microstructure was originally fleshed out in the monograph \cite{rudzis2022}. We begin with some preliminary notions.

\subsection{Preliminaries} \label{ssec:prelim} 

\subsubsection{Ergodic theory from the ``dual'' point of view} \label{sssec:dual}

In the ergodic theory of Markov processes, results are typically framed in such a way as to elucidate the structure of the families of measures which are stationary or ergodic, relative to some Markov process which is fixed. One can just as well consider the ``dual'' line of inquiry, instead fixing the measure and investigating the structure of the families of Markov processes which make the measure invariant or ergodic. The second framing is more natural in some situations, including the setting of this paper and other situations remarked upon below.

Recall that if $(X,\mathcal{X})$ is a measure space, then $\MK(X,\mathcal{X})$ denotes the set of all Markov kernels on $(X,\mathcal{X})$. If $\mu$ is a measure on $(X,\mathcal{X})$, then we let $\MKI(X,\mathcal{X},\mu)$ denote the set of all Markov kernels on $(X,\mathcal{X})$ which preserve the measure $\mu$. I.e. $p \in \MKI(X,\mathcal{X},\mu)$ if and only if $p \in \MK(X,\mathcal{X})$, and $\int p(x,\cdot) \mu(dx) = \mu$.

We say that a set $A \in \mathcal{X}$ is \textit{$(\mu,p)$-invariant} if for $\mu$-a.e. $x \in A$, $p(x,A) = 1$. It is easy to verify that the collection of $(p,\mu)$-invariant sets in $\mathcal{X}$ forms a $\sigma$-algebra. 

We say that a set $A \in \mathcal{X}$ is \textit{$(p,\mu)$-ergodic} if $A$ is $(\mu,p)$-invariant, and for every $B \in \mathcal{X}$ such that $B \subset A$, if $B$ is $(\mu,p)$-invariant, then $\mu(B) \in \{0,\mu(A)\}$. Furthermore, we say that a measure $\mu$ is \textit{ergodic} with respect to $p \in \MK(X,\mathcal{X})$ if $X$ is a $(p,\mu)$-ergodic set. That is, for every $(\mu,p)$-invariant set $B \in \mathcal{X}$, $\mu(B) \in \{0,1\}$.

This notion of ergodicity of measures is the same as the usual one for Markov processes, which we briefly review now (for more details, we refer the reader to \cite{hairer-notes}). Let $S = X^\mathbb{Z}$ be the space of $X$-valued, bi-infinite sequences, and let $\mathcal{X} = \sigma(\bigotimes_{i \in \mathbb{Z}} A_i : A_i \in \mathcal{X})$ be the product $\sigma$-algebra on $S$. If $\mu$ is a probability measure on $(X,\mathcal{X})$ and $p \in \MKI(X,\mathcal{X},\mu)$, one may construct a probability measure $\mathbb{P}_\mu$ on $(S, \mathcal{S})$ whose finite-dimensional distributions are given as follows: For integers $M \leq N$ and sets $A_{M}, A_{M+1}, A_{M+2}, \dots, A_N \in \mathcal{X}$,
\begin{equation}
\begin{split}
    & \mathbb{P}_\mu(\{\{x_i\}_{i \in \mathbb{Z}} \in \Omega : x_i \in A_i \text{ for } M \leq i \leq N\}) \\
    & = \int_{A_{M}} \cdots \int_{A_{N-1}} \int_{A_{N}} p(x_{N-1}, d x_{N}) p(x_{N-2}, d x_{N-1}) \\
    & \hspace{2.5in}\cdots \ p(x_{M}, d x_{M+1}) \mu(d x_{M}). 
\end{split}
\end{equation}
Define the \textit{shift map} $s : S \to S$ by 
\begin{equation}
    s(\{x_i\}_{i \in \mathbb{Z}}) = \{x_{i+1}\}_{i \in \mathbb{Z}}, \quad \text{ for } (x_i)_{i \in \mathbb{Z}} \in \Omega.
\end{equation}
Using invariance of $\mu$, one may show that the shift map preserves the measure $\mathbb{P}_\mu$. According to the usual definition, the probability measure $\mu$ is ergodic if the corresponding probability measure $\mathbb{P}_\mu$ is ergodic with respect to the action of the shift map, i.e. for all $B \in \mathcal{S}$, if $s^{-1}(B) = B$, then $\mathbb{P}_{\mu}(B) \in \{0,1\}$. We have

\begin{proposition} \label{prop:ergsets}
A probability measure $\mu$ on $(X,\mathcal{X})$ is ergodic with respect to a Markov transition kernel $p$, in the sense given above, if and only if, for every $(\mu,p)$-invariant set $B \in \mathcal{X}$, $\mu(B) \in \{0,1\}$.
\end{proposition}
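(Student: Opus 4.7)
My plan is to establish, modulo null sets, a correspondence between $(\mu,p)$-invariant sets in $\mathcal{X}$ and shift-invariant sets in $\mathcal{S}$, and to use it to transfer triviality of one invariant $\sigma$-algebra to the other in each direction. For the forward direction, suppose $\mathbb{P}_\mu$ is shift-ergodic and let $B \in \mathcal{X}$ be $(\mu,p)$-invariant. Form the cylinder $\tilde{B} := \{(x_i)_{i \in \mathbb{Z}} \in S : x_0 \in B\}$. A direct computation gives
\[
\mathbb{P}_\mu(\tilde{B} \cap s^{-1}\tilde{B}) = \int_B p(x,B)\,\mu(dx) = \mu(B),
\]
where the last equality uses $(\mu,p)$-invariance. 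Since $\mathbb{P}_\mu(\tilde{B}) = \mathbb{P}_\mu(s^{-1}\tilde{B}) = \mu(B)$ by stationarity, $\mathbb{P}_\mu(\tilde{B}\,\triangle\, s^{-1}\tilde{B}) = 0$; iterating yields $\mathbb{P}_\mu(\tilde{B}\,\triangle\, s^{-n}\tilde{B}) = 0$ for every $n \in \mathbb{Z}$. The standard null-set correction $B^* := \bigcup_{N}\bigcap_{n \geq N} s^{-n}(\tilde{B})$ is strictly shift-invariant and satisfies $\mathbb{P}_\mu(B^*\,\triangle\,\tilde{B}) = 0$, so shift-ergodicity forces $\mu(B) = \mathbb{P}_\mu(B^*) \in \{0,1\}$.

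For the backward direction, the hypothesis that every $(\mu,p)$-invariant set has $\mu$-measure $0$ or $1$ is equivalent, by examining level sets, to the assertion that every $L^2(\mu)$-function fixed by the Markov operator $\mathcal{P}f(x) := \int f(y)\,p(x,dy)$ is $\mu$-a.s.\ constant. Von Neumann's mean ergodic theorem for the contraction $\mathcal{P}$ on $L^2(\mu)$ then yields
\[
\frac{1}{N}\sum_{k=0}^{N-1} \mathcal{P}^k h \longrightarrow \int h\,d\mu \qquad \text{in } L^2(\mu)
\]
for every $h \in L^2(\mu)$. I would bootstrap this into a Cesàro-mean mixing statement on $(S,\mathcal{S},\mathbb{P}_\mu)$: for finite cylinder events $A$ (supported on coordinates up to index $a$) and $C$ (supported starting at index $c$), the Markov property applied at time $a$ factors
\[
\mathbb{P}_\mu(A \cap s^{-k}C) = \langle f_A,\, \mathcal{P}^{k + c - a}\, g_C \rangle_{L^2(\mu)}
\]
for all $k$ large enough that $c+k > a$, where $f_A(x) := \mathbb{E}_\mu[\mathbf{1}_A \mid X_a = x]$ and $g_C$ is the bounded measurable function on $X$ obtained by shifting the cylinder $C$ to start at $0$ and conditioning on $X_0$. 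Averaging over $k$ and invoking the mean ergodic theorem yields
\[
\lim_{N \to \infty} \frac{1}{N}\sum_{k=0}^{N-1} \mathbb{P}_\mu(A \cap s^{-k}C) = \mathbb{P}_\mu(A)\,\mathbb{P}_\mu(C),
\]
which extends to all $A, C \in \mathcal{S}$ by a monotone-class argument. Specializing to a shift-invariant $C$ (so $s^{-k}C = C$) and taking $A = C$ gives $\mathbb{P}_\mu(C) = \mathbb{P}_\mu(C)^2$, hence $\mathbb{P}_\mu(C) \in \{0,1\}$.

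The main work is in the backward direction: the forward implication is essentially bookkeeping with the null-set correction, while the backward one requires reducing a statement about bi-infinite trajectories to one about one-step Markov dynamics. The mean ergodic theorem for $\mathcal{P}$ packages this cleanly, but carrying out the factorization of $\mathbb{P}_\mu(A \cap s^{-k}C)$ hinges on careful use of the Markov property and stationarity to isolate the past-valued part of $A$ from the $k$-shifted-future part of $C$.
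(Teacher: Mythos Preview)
The paper does not supply its own proof of this proposition; it simply cites \cite[Corollary 5.11]{hairer-notes}. Your argument is correct and is essentially the standard proof one finds in such references: the forward direction lifts a $(\mu,p)$-invariant set to an almost-shift-invariant cylinder and applies a null-set correction, while the backward direction passes through the equivalence between triviality of $(\mu,p)$-invariant sets and triviality of $\mathcal{P}$-fixed $L^2$ functions, then uses the mean ergodic theorem for the contraction $\mathcal{P}$ to obtain Ces\`aro mixing on cylinders. One small point worth making explicit: the equivalence you invoke (``by examining level sets'') requires the standard Jensen-type observation that $\mathcal{P}f = f$ in $L^2(\mu)$ forces $f$ to be $p(x,\cdot)$-a.s.\ constant for $\mu$-a.e.\ $x$, so that the super-level sets $\{f > c\}$ are genuinely $(\mu,p)$-invariant; this is routine but not purely formal. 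Otherwise the argument is complete.
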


\noindent For a proof of this proposition, we refer the reader to \cite[Corollary 5.11]{hairer-notes}.

\begin{remark} \label{rem:dualerg} \normalfont
Let us mention a few research areas where the ``dual'' perspective, putting spaces of Markov chains front-and-center, is more natural. One case arises in the context of rough billiards, which we discuss in more detail below, in \S\ref{ssec:rough}. See specifically Theorem \ref{thm:colchar} and the work of Plakhov \cite{plakhov2004newtonprob, plakhov2009billiards, plakhov2012}, and Angel, Burdzy, and Sheffield \cite{ABS2013detapprox}. A rather different case may be found in Markov Chain Monte Carlo (MCMC) algorithms \cite{speagle2020}. Here the point of interest is the measure $\mu$ from which one wishes to sample; one accomplishes this by finding \textit{some} Markov kernel $P \in \MKE(\mu)$ which mixes fast. A third example, which does not strictly fall under the category of ergodic theory, but still arguably participates in this framing is optimal mass transport. Here the goal is to minimize a cost functional over couplings of a pair of probability distributions $\mu$ and $\nu$ which have been fixed. By disintegration of measure, any such coupling can be identified with a Markov kernel $P$ such that $P\mu = \nu$. When $\mu = \nu$, the problem of optimal mass transport reduces to that of minimizing a cost functional over the space $\MKI(\mu)$. A good introduction to this subject area may be found in \cite{santambrogio2015}.
\end{remark}

\subsubsection{Pseudometric topology} \label{sssec:pseudo}

Let $X$ be a locally compact and $\sigma$-compact topological space, and let $\mathcal{X}$ be the Borel $\sigma$-algebra on $X$. Let $C_c(X \times X)$ denote the space of compactly supported continuous functions on $X \times X$, equipped with the topology of uniform convergence, and let $d$ be a metric on the dual space $C_c(X \times X)^*$ which induces the vague (or weak$^*$) topology on $C_c(X \times X)^*$. For example, we could take 
$$
d(\nu_1,\nu_2) := \sum_{i = 1}^\infty 2^{-i} \frac{|\int f_i d(\nu_1 - \nu_2)|}{1 + |\int f_i d(\nu_1 - \nu_2)|}, \quad \nu_1,\nu_2 \in C_c(X \times X)^*,
$$
where $\{f_i\}_{i \in \mathbb{N}}$ is some enumeration of a countable dense subset of $C_c(X \times X)$. (For further discussion, see \cite[Chapter 7]{folland1999}. Note that it is important that we consider the dual space $C_c(X \times X)^*$ instead of $C_0(X \times X)^*$; the latter space may not be metrizable.) For any $p \in \MK(X,\mathcal{X})$ and $\mu \in C_c(X)^*$, we define the measure $p \otimes \mu \in C_c(X \times X)^*$ by
$$
p \otimes \mu(dx,dx') = p(x,dx')\mu(dx).
$$
Given a measure $\mu$ on $(X,\mathcal{X})$, we equip $\MK(X,\mathcal{X})$ with the pseudometric
$$
d^\mu(p,p') := d(p \otimes \mu, p' \otimes \mu), \quad p, p' \in \MK(X,\mathcal{X}).
$$
The set $\MK(X,\mathcal{X})$ partitions into equivalence classes defined by the equivalence relation: $p \sim_\mu p'$ if and only if $d^\mu(p,p') = 0$. Denote the set of equivalence classes by $\MK(X,\mathcal{X},\mu)$ (or just $\MK(\mu)$ for short). The pseudometric $d^\mu$ on $\MK(X,\mathcal{X})$ induces a metric on $\MK(X,\mathcal{X},\mu)$, which by abuse of notation we denote also by $d^\mu$. Observe that the equivalence relation $\sim_\mu$ preserves the property that $P \in \MKI(X,\mathcal{X},\mu)$ and the property that $P \in \MKE(X,\mathcal{X},\mu)$. In what follows, we will regard $\MKI(X,\mathcal{X},\mu)$ and $\MKE(X,\mathcal{X},\mu)$ as metric subspaces of $\MK(X,\mathcal{X},\mu)$.

\begin{remark} \label{rem:uniqueerg} \normalfont
One might wonder how unique ergodicity fits into this picture. Note that if $p \in \MKE(\mu)$, then as long as there exists some non-empty $\mu$-null set $N \in \mathcal{X}$, there will always be a Markov kernel $p' \sim_\mu p$ such that $\mu$ is ergodic but not uniquely ergodic with respect to $p'$. Indeed, we may define $p'(x,\cdot) = p(x,\cdot)$ if $x \in X \smallsetminus N$ and $p'(x,\cdot) = \delta_x(\cdot)$ if $x \in N$. For any choice of $x_0 \in N$, the measures $\mu$ and $\delta_{x_0}$ are distinct measures which are ergodic for $p'$. In other words, if $p$ is only defined up to its equivalence class in $\MKE(\mu)$ (e.g. if $p$ defined as a some limit of Markov kernels in the topology induced by $d^\mu$), then it makes no sense to ask the question whether $\mu$ is uniquely ergodic for $p$, as $p$ is only defined up to $\mu$-null sets. This type of issue actually arises in the definition of rough collision laws, presented in the next subsection. On the other hand, not all is lost; for if $\lambda$ is some other measure (not necessarily invariant for $p$), and $p$ is defined up to its equivalence class in $\MK(\lambda)$, then it does make sense to ask whether $\mu$ is the unique ergodic measure for $p$ which is not singular with respect to the measure $\lambda$.
\end{remark}

\subsection{Billiard theoretic derivation of the class of rough collision laws} \label{ssec:rough}

\subsubsection{Rough reflection laws on submanifolds of $\mathbb{R}^d$.} We begin by reviewing a billiard theoretic construction of rough reflection laws, originally presented in \cite[\S 6]{rudzis2022} with variants of the construction appearing previously in \cite{plakhov2004newtonprob, plakhov2009billiards, plakhov2012, ABS2013detapprox}. Any general treatment of mathematical billiards must impose regularity conditions on the boundary of a billiard domain, so that for almost all initial conditions, the billiard trajectory is defined for all time and avoids certain pathological behavior (e.g. tangential collisions or accumulation of collision times). We require our domains to be subsets of $\mathbb{R}^d$ with boundaries which are $C^2$ except possibly on a Hausdorff $(d-1)$-measure zero set of ``singularities.'' To avoid getting distracted by technicalities, we refer the reader to \cite[\S 6.1.1]{rudzis2022} for the precise statements of the regularity conditions, and we will take as given that for almost all initial conditions, our billiard trajectories are defined for all time and avoid the pathological behavior mentioned above. Since this subsection serves mainly to justify the definition of the class $\mathcal{A}$ of collision laws, defined in \S\ref{ssec:ergodic}, and the statements and proofs of our main results do not use definitions and results presented here, a greater level of detail does not seem warranted.

To begin, consider $\mathbb{R}^d$, where $d \geq 1$, equipped with some inner product $\langle \cdot, \cdot \rangle$. Let $\mathcal{M}_1$ be some smooth, embedded submanifold of $\mathbb{R}^d$ with boundary. Suppose $\mathcal{M}_0$ is any closed subset of $\mathbb{R}^d$ with $\mathcal{M}_0 \supset \mathcal{M}_1$, such that 
\begin{enumerate}
\item[(i)] $\sup_{x \in \mathcal{M}_1}\text{dist}(x, \mathbb{R}^d \smallsetminus \mathcal{M}_0) < \infty$, and 
\item[(ii)] $\mathcal{M}_0$ satisfies the regularity conditions of \cite[\S 6.1.1]{rudzis2022}.
\end{enumerate} 
For each $x \in \partial \mathcal{M}_0$, we say that a vector $v \in \mathbb{R}^d$ \textit{points into $\mathcal{M}_0$ at $x$} if there exists $\epsilon > 0$ such that for all $s \in (0,\epsilon)$, $x + sv  \in \text{Int}\mathcal{M}_0$. We refer to a point $x_0$ in $\partial \mathcal{M}_0$ as a \textit{regular point} of the boundary if there is a neighborhood $U$ of $x_0$, relative to $\mathbb{R}^d$, such that $U \cap \partial \mathcal{M}_0$ is a $C^2$-embedded submanifold of $\mathbb{R}^d$. In a neighborhood of any such point in $\partial\mathcal{M}_0$, there is a differentiable field of unit normal vectors $n(x)$, where normality is with respect to the inner product $\langle \cdot, \cdot \rangle$. Note that at such a point $x_0$, a vector $v$ is inward-pointing if $\langle v, n(x_0) \rangle > 0$. Similar such definitions apply to $\mathcal{M}_1$, except note every point in $\partial \mathcal{M}_1$ is a regular point of its boundary.

Consider a billiard particle in $\mathcal{M}_0$ which moves linearly in the interior of $\mathcal{M}_0$ and undergoes \textit{specular reflection} from the boundary. In other words, if $x(t)$ is the trajectory of the billiard particle in $\mathbb{R}^d$, which at some time $t_0$ collides with the boundary of $\mathcal{M}_0$ at a regular point of $\partial \mathcal{M}_0$, then 
\begin{equation}
x'(t_0+) = x'(t_0-) + 2\langle x'(t_0-), n(x(t_0)) \rangle n(x(t_0)), \label{eq:spec}
\end{equation}
where $x'(t_0\pm)$ denotes, respectively, the left (-) and right (+) -hand derivatives of $x$ at $t_0$, and $n(x(t_0))$ is the unit normal vector pointing into $\mathcal{M}_0$ at $x(t_0)$. 

Let $\|\cdot\|$ denote the norm on $\mathbb{R}^d$ determined by the inner product $\langle \cdot, \cdot \rangle$. Since the specular reflection law \eqref{eq:spec} preserves $\| \cdot \|$, the speed of the billiard particle is constant for all time. Let $S^{d-1} \subset \mathbb{R}^d$ denote the unit sphere with respect to the inner product $\langle \cdot, \cdot \rangle$. The state of the billiard particle at any time may be described by a pair $(x,v) \in \mathcal{M}_0 \times S^{d-1}$, where $x$ is the position of the billiard particle, and $v$ is its velocity.

Define
$$
S_+\partial \mathcal{M}_1 = \{(x,v) \in \partial\mathcal{M}_1 \times S^{d-1} : v \text{ points into $\mathcal{M}_1$ at } x\}.
$$
Assume that the billiard has the following property:
\begin{enumerate}
\item[(P)] There exists a full-measure subset $\mathcal{D} \subset S_+\partial\mathcal{M}_1$, such that for every $(x,v) \in \mathcal{D}$, the trajectory starting from $(x,-v)$ is defined for all time and eventually returns to $\mathcal{D}$ in some state $(x',v') \in S_+\partial \mathcal{M}_1$. \label{property-welldef}
\end{enumerate}
We define the mapping $P^{\mathcal{M}_0, \mathcal{M}_1} : \mathcal{D} \to \mathcal{D}$ by 
\begin{equation}
P^{\mathcal{M}_0, \mathcal{M}_1}(x,v) = (x',v'). \label{eq:reflaw}
\end{equation}
We call $P^{\mathcal{M}_0, \mathcal{M}_1}$ the reflection law associated with $\mathcal{M}_0$ and $\mathcal{M}_1$. 

Two key properties of the reflection law are summarized in the following proposition, proved in \cite{rudzis2022}.

\begin{proposition} \label{prop:rrefP}
\cite[Proposition 6.5]{rudzis2022} (i) $P^{\mathcal{M}_0, \mathcal{M}_1}$ is an involution on $\mathcal{D}$, i.e. $P^{\mathcal{M}_0, \mathcal{M}_1} \circ P^{\mathcal{M}_0, \mathcal{M}_1} = \text{Id}_{\mathcal{D}}$.

(ii) Let $m_{\partial\mathcal{M}_1}(dx)$ denote the surface measure on $\partial \mathcal{M}_1$ induced by the inner product $\langle \cdot, \cdot \rangle$ on $\mathbb{R}^d$, and let $\sigma(dv)$ denote surface measure on $S^{d-1}$. Let $\widehat{\Lambda}(dx dv)$ be the measure on $S_+\partial\mathcal{M}_1$ defined by restricting the measure $|\langle v, n(x) \rangle| \sigma(dv) m(dx)$ on $\partial \mathcal{M}_1 \times S^{d-1}$ to $S_+\partial\mathcal{M}_1$. Then $P^{\mathcal{M}_0, \mathcal{M}_1}$ preserves $\widehat{\Lambda}$ in the sense that 
$$
\widehat{\Lambda}((P^{\mathcal{M}_0, \mathcal{M}_1})^{-1}(B)) = \widehat{\Lambda}(B) \text{ for all Borel sets } B \subset S_+\partial \mathcal{M}.
$$
\end{proposition}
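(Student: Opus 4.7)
My plan is to handle parts (i) and (ii) separately, using time-reversibility of specular reflection for (i) and Liouville invariance of the billiard flow via a Poincar\'e cross-section argument for (ii).

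For part (i), I would begin by noting that the specular reflection law \eqref{eq:spec} is time-reversal symmetric: if $t \mapsto x(t)$ is a billiard trajectory in $\mathcal{M}_0$, then so is $t \mapsto x(-t)$. Let $(x,v) \in \mathcal{D}$ with $P^{\mathcal{M}_0,\mathcal{M}_1}(x,v) = (x',v')$. By definition, the trajectory launched from $x$ with velocity $-v$ travels through $\mathcal{M}_0$, reflecting specularly at $\partial \mathcal{M}_0$ as needed, and re-enters $\mathcal{M}_1$ at $x'$ with inward-pointing velocity $v'$. Reversing time, the trajectory launched from $x'$ with velocity $-v'$ retraces the same spatial path in reverse and arrives at $x$ with velocity $v$, so $P^{\mathcal{M}_0,\mathcal{M}_1}(x',v') = (x,v)$. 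To make this fully rigorous one must check that $\mathcal{D}$ can be chosen symmetric under $P^{\mathcal{M}_0,\mathcal{M}_1}$; this follows because the set of pathological trajectories --- those hitting singularities of $\partial \mathcal{M}_0$, making tangential contact, or accumulating collision times --- is itself time-symmetric and $\widehat{\Lambda}$-null by the regularity hypothesis on $\mathcal{M}_0$.

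For part (ii), the central input is the classical fact that the billiard flow in $\mathcal{M}_0$ preserves the Liouville measure $dx \, dv$ on the unit tangent bundle $\mathcal{M}_0 \times S^{d-1}$: straight-line motion in the interior is trivially volume-preserving, and specular reflection is a $\langle \cdot, \cdot \rangle$-isometric involution. I would then view $S_+\partial \mathcal{M}_1$ as a Poincar\'e cross-section for this flow; since particles pass through $\partial \mathcal{M}_1$ freely rather than reflect there, $P^{\mathcal{M}_0,\mathcal{M}_1}$ is (up to the sign flip $v \mapsto -v$ at departure) a first-return map to this section. By Santal\'o's formula --- equivalently, contraction of the Liouville form against the generating geodesic vector field --- the induced invariant measure on the transverse codim-one slice $\{x \in \partial\mathcal{M}_1\}$ is proportional to $|\langle v, n(x) \rangle| \, m_{\partial\mathcal{M}_1}(dx) \, \sigma(dv)$, the factor $|\langle v, n \rangle|$ quantifying the rate at which the flow pierces $\partial \mathcal{M}_1$ at the state $(x,v)$. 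Hence $\widehat{\Lambda}$ is preserved by $P^{\mathcal{M}_0,\mathcal{M}_1}$.

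The main obstacle is technical book-keeping rather than conceptual: specular reflection is only well-defined at regular boundary points, the return-time function can be unbounded, and one must verify that the exceptional set of pathological trajectories has $\widehat{\Lambda}$-measure zero while being invariant under both time-reversal and $P^{\mathcal{M}_0,\mathcal{M}_1}$. The regularity hypotheses carried over from \cite[\S 6.1.1]{rudzis2022} are crafted precisely so that this exceptional set is negligible; with them, both the involution property and the cross-section argument apply on the full-measure subset $\mathcal{D}$.
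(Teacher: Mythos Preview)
The paper does not actually prove this proposition; it is quoted verbatim from \cite[Proposition 6.5]{rudzis2022} and simply invoked as background. So there is no ``paper's own proof'' to compare against here.

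That said, your outline is the standard and correct argument. For (i), time-reversal of specular trajectories is exactly the right mechanism, and your check that the exceptional set can be taken symmetric under $P^{\mathcal{M}_0,\mathcal{M}_1}$ is the only nontrivial point. For (ii), the Poincar\'e-section/Santal\'o argument is the canonical route; the one point worth stating a bit more carefully is that $P^{\mathcal{M}_0,\mathcal{M}_1}$ is not literally the first-return map to $\partial\mathcal{M}_1$, but rather the composition of the sign flip $(x,v)\mapsto(x,-v)$ with the first passage back to $S_+\partial\mathcal{M}_1$. Since the flip preserves $|\langle v,n(x)\rangle|\,\sigma(dv)\,m_{\partial\mathcal{M}_1}(dx)$ and the return map preserves the induced cross-section measure by Liouville invariance, the composition does as well. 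With that clarification your sketch is complete.
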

The measure $\widehat{\Lambda}$ appearing in the proposition above is referred to as \textit{Lambertian measure}.

We say that a Markov kernel $P(x,v; dx' dv')$ on $S_+\partial\mathcal{M}_1$ is a \textit{rough reflection law} if it takes the form
$$
P(x,v; dx' dv') = \delta_x(dx')\widetilde{P}(x,v; dv'),
$$
where, for each $(x,v) \in S_+\mathcal{M}_1$, $\widetilde{P}(x,v; dv')$ is a measure on the fiber $S_+\partial\mathcal{M}_1|_{x} := \{(x',v') \in S_+\partial\mathcal{M}_1 : x' = x\}$; and there exists a sequence $\{\mathcal{M}_\epsilon\}_{\epsilon > 0}$ of closed subsets of $\mathcal{M}_1$ satisfying conditions (i) and (ii), such that as $\epsilon \to 0$, 
$$
\mathcal{M}_\epsilon \to \mathcal{M}_1 \text{ in Hausdorff distance},
$$
and
\begin{equation}
\delta_{P^{\mathcal{M}_\epsilon, \mathcal{M}_1}(x,v)}(dx' dv') \to P(x,v; dx' dv') \text{ in } \MK(S_+\partial\mathcal{M}_1,\widehat{\Lambda}).\label{eq:markovlimit}
\end{equation}
Note that since $\mathcal{M}_\epsilon \subset \mathcal{M}_1$ for all $\epsilon$, the convergence of $\mathcal{M}_\epsilon$ to $\mathcal{M}_1$ in Hausdorff distance is equivalent to 
$$
\lim_{\epsilon \to 0}\sup_{x \in \mathcal{M}_1} \inf_{y \in \mathbb{R}^d \smallsetminus \mathcal{M}_\epsilon} \|x-y\|  \to 0. 
$$
The second convergence is with respect to the metric $d^{\widehat{\Lambda}}$ defined in \S\ref{sssec:pseudo}.

\subsubsection{Rough reflection laws on half-spaces.} The above construction simplifies in some respects if $\mathcal{M}_1$ is a closed half-space of form $\{x \in \mathbb{R}^d : \langle x - x_0, n_0 \rangle \geq 0\}$. In this case, 
$$
S_+\partial \mathcal{M}_1 = L(x_0,n_0) \times S_+(n_0),
$$
where $L(x_0,n_0) = \{x \in \mathbb{R}^d : \langle x - x_0, n_0 \rangle = 0\}$ and $S_+(n_0) = \{v \in S^{d-1} : \langle v, n_0 \rangle > 0\}$. In this setting, we say that $P(x,v; dx' dv')$ is \textit{homogeneous} if it takes the form 
$$
P(x,v; dx' dv') = \delta_x(dx') \widetilde{P}(v, dv'),
$$
for some Markov kernel $P(v, dv')$ on $S_+(n_0)$. For this paper, we will only be concerned with homogeneous rough reflection laws on half-spaces.

\subsubsection{Disk-and-wall system.} We are now ready to describe the precise sense in which the collision dynamics for the rough disk in the strip, described above, are the limiting case of classical rigid body collision dynamics. In the prelimit, we consider a freely moving disk and a fixed (infinitely massive) wall, whose surfaces are equipped with periodic, $\epsilon$-scale rigid rough features. To define the wall precisely, we introduce the following ingredients:
\begin{itemize}
\item A \textit{cell} $\Sigma$, defined to be a closed subset of the cylinder $\mathbb{S}^1 \times \mathbb{R}$ such that 
\begin{enumerate}
\item[1.] $\mathbb{S}^1 \times (-\infty,-1] \subset \Sigma \subset \mathbb{S}^1 \times (-\infty,0]$; \label{cell1}
\item[2.] the complement of $\Sigma$ in $\mathbb{S}^1 \times \mathbb{R}$ is connected; \label{cell2}
\item[3.] there exists a nontrivial loop $\gamma \subset \mathbb{S}^1 \times \mathbb{R}$ starting and ending at the point $(1,0)$ such that $\gamma$ lies entirely in $\Sigma$ (nontrivial means that $\gamma$ cannot be contracted in $\mathbb{S}^1 \times \mathbb{R}$ to a point); and \label{cell3}
\item[4.] the topological boundary of $\Sigma$ is piecewise $C^2$. \label{cell4}
\end{enumerate}
\item A \textit{roughness scale} $\epsilon > 0$.
\end{itemize}
We refer to a pair $(\Sigma,\epsilon)$ satisfying the above conditions as a \textit{shape-scale pair}. We define the wall $W = W(\Sigma, \epsilon)$ be the unique subset of $\mathbb{R}^2$ which is invariant under the translation
$$
(x_1,x_2) \mapsto (x_1 + \epsilon, x_2) : \mathbb{R}^2 \to \mathbb{R}^2,
$$
and whose image under the covering map 
$$
(x_1,x_2) \mapsto (e^{2\pi i x_1/\epsilon}, x_2/\epsilon) : \mathbb{R}^2 \to \mathbb{S}^1 \times \mathbb{R}
$$
is $\Sigma$.

In the definition of the cell $\Sigma$, the conditions 1-3 eliminate certain pathological examples which we do not want to consider. The first condition guarantees that the boundary of the wall $W(\Sigma,\epsilon)$ is bounded in the strip $-\epsilon \leq x_2 \leq 0$; the second condition says that there are no ``bubbles'' in the wall $W$; and the third eliminates examples where there are fissures in the wall which are arbitrarily long in the horizontal direction. Note that we allow the wall to have multiple connected components, and it is only the complement which is required to be connected. For the precise meaning of ``piecewise $C^2$'' in the fourth condition, see \cite[\S1.3.1]{rudzis2022}.

The pre-limiting collision dynamics do not admit a particularly tractable description if we allow the rough features on the disk to be as general as those of the wall (although the heuristic justification for the representation \eqref{eq:roughdecomp}) discussed at the end of \S\ref{ssec:phys} remains in force). Instead, we equip the surface of the disk with a special choice microstructure to interact with the general microstructure on the wall. Let $\rho(\epsilon)$ be some positive, non-increasing function of $\epsilon$ such that 
$$
\rho(\epsilon) \to 0, \text{ and } \frac{\epsilon^{1/2}}{\rho(\epsilon)} \to 0,  \text{ as } \epsilon \to 0.
$$
We define a \textit{reference body} as a certain subset of the plane,
$$
D = D(\epsilon) = \left( \bigcup_{k = 0}^{N(\epsilon) - 1} \{S_k(\epsilon)\} \right) \cup D_0 \subset \mathbb{R}^2,
$$
where 
$$
N(\epsilon) := \left\lceil \frac{2\pi}{\rho(\epsilon)} \right\rceil, \text{ and } S_k(\epsilon) := (\sin k\rho(\epsilon), -\cos k\rho(\epsilon)) \text{ for } 0 \leq k \leq N(\epsilon) - 1,
$$
and 
$$
D_0 \subset \{(x_1,x_2) : x_1^2 + x_2^2 \leq (1 - \rho(\epsilon)^2)^2\}.
$$
The reference body may be described as a ``disk with satellites.'' The quantity $N(\epsilon)$ is the number of satellites, and $S_k(\epsilon)$ is the position of the $k$-satellite, indexed in increasing order counterclockwise around the perimeter of the disk, starting from $S_0(\epsilon)$ at the bottom. 

For $(x_0,x_1,x_2) \in \mathbb{R}^3$, we denote by $D(x_0,x_1,x_2)$ the subset of $\mathbb{R}^2$ obtained by rotating the reference body $D$ counterclockwise about its center by an angle of $x_0$, and translating it by the vector $(x_1,x_2)$. We will sometimes abuse notation by letting $D$ denote both the reference body and the physical body which it represents. 

During a collision, only the satellites $S_k$ can come into contact with the wall $W$. Thus $D_0$ is of no mathematical significance in the analysis but only serves to convince the reader of the physical generality of the model.

We assume that $D$ has some rotationally symmetric mass density $\lambda$, not depending on $\epsilon$, and we let 
$$
m = \int_{\mathbb{R}^2} \lambda(dx), \quad J = \int_{\mathbb{R}^2} |x|^2\lambda(dx),
$$
denote the mass and moment of inertia, respectively, of the body. The configuration space for the disk-and-wall system is given by 
$$
\mathcal{M}(\Sigma, \epsilon) := \overline{\{(x_0,x_1,x_2) \in \mathbb{R}^3 : D(x_0,x_1,x_2) \cap W(\Sigma,\epsilon) = \emptyset\}},
$$
where the topological closure is taken in $\mathbb{R}^3$. We equip the ``velocity space''  $\mathbb{R}^3$ with the inner product $\langle \cdot, \cdot \rangle$ and norm $\| \cdot \|$ defined by \eqref{eq:kinnerproduct}.

The evolution of the system is described by a continuous trajectory $x(t) \in \mathcal{M}(\Sigma,\epsilon)$. The trajectory will follow billiard dynamics under appropriate physical assumptions, which may be stated somewhat informally as follows:
\begin{enumerate}
\item[P1.] The disk $D$ and wall $W$ may not interpenetrate.
\item[P2.] The system is subject to Euler's laws of rigid body motion (see \cite[\S 3]{rudzis2022}).
\item[P3.] When not in contact, the net force applied to each body is zero; and upon contact, a single impulsive force is applied to the disk at the point of contact and directed parallel to the unit normal vector on the wall.

\item[P4.] The kinetic energy of the disk is preserved for all time. 
\end{enumerate}
In view of P3, the trajectory will be linear as long as it does not make contact with the boundary of $\mathcal{M}(\Sigma,\epsilon)$. Moreover, the collision dynamics are given as follows.

\begin{proposition} \label{prop:isbilliard}
The unique collision dynamics which satisfy P1-P4 and guarantee that the trajectory of the system is defined for all time $t \in \mathbb{R}$, on a Lebesgue full-measure subset initial conditions, is specular reflection with respect to the inner product $\langle \cdot, \cdot \rangle$.
\end{proposition}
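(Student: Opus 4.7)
The plan is to recast the physical conditions P1--P4 as a geometric statement about $\partial \mathcal{M}(\Sigma,\epsilon) \subset \mathbb{R}^3$ equipped with the kinetic-energy inner product from \eqref{eq:kinnerproduct}. The only substantive computation is to identify the direction of the impulse prescribed by P3 with the $\langle\cdot,\cdot\rangle$-normal to $\partial\mathcal{M}$ at the collision configuration; once this identification is made, P4 determines the magnitude of the velocity jump and P1 rules out the trivial ``no-reflection'' solution, leaving specular reflection in the sense of \eqref{eq:spec} as the unique possibility.

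Fix a configuration $x_* = (x_0, x_1, x_2) \in \partial\mathcal{M}$ at which the disk $D(x_*)$ touches $W$ at a single regular contact point $p$, and let $n_w \in \mathbb{R}^2$ be the unit normal to $\partial W$ at $p$ pointing into the complement of $W$. Set $r := p - (x_1, x_2)$ and define a local ``gap function'' $g(x_0', x_1', x_2')$ equal to the signed distance from the relevant contact feature of $D(x_0', x_1', x_2')$ to $\partial W$ along $n_w$, with $g \geq 0$ on $\mathcal{M}$ and $g = 0$ on $\partial \mathcal{M}$. A first-order calculation shows that a perturbation $(\delta x_0, \delta x_1, \delta x_2)$ displaces the contact point by $\delta x_0\, r^\perp + (\delta x_1, \delta x_2)$ (with $r^\perp = (-r_2, r_1)$), so its Euclidean differential is
\[
dg = (r \times n_w)\,\delta x_0 + (n_w)_1\,\delta x_1 + (n_w)_2\,\delta x_2,
\]
and hence the $\langle\cdot,\cdot\rangle$-gradient is
\[
\nabla g = \left(\tfrac{2(r \times n_w)}{J},\; \tfrac{2(n_w)_1}{m},\; \tfrac{2(n_w)_2}{m}\right).
\]
By P2 and P3, an impulse of magnitude $\lambda$ along $n_w$ applied at $p$ yields $\Delta v_0 = \lambda(r \times n_w)/J$ and $\Delta(v_1, v_2) = \lambda n_w/m$, so $\Delta v = \tfrac{\lambda}{2}\nabla g$. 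In other words, $\Delta v$ is parallel to the inward $\langle\cdot,\cdot\rangle$-unit normal $N := \nabla g / \|\nabla g\|$ to $\partial\mathcal{M}$ at $x_*$. This identification is the crux of the argument: it is precisely the factor $J/2$ and $m/2$ built into the inner product \eqref{eq:kinnerproduct} that aligns the ``physical'' normal impulse in $\mathbb{R}^2$ with the ``configuration-space'' normal in $\mathbb{R}^3$.

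Writing $v' = v + cN$, condition P4 gives $\|v'\|^2 = \|v\|^2$, forcing $c \in \{0,\, -2\langle v, N\rangle\}$. At a transverse collision the pre-collision velocity satisfies $\langle v, N\rangle < 0$, so the branch $c = 0$ violates P1 (the disk would immediately penetrate the wall), and we are left with $v' = v - 2\langle v, N\rangle N$, i.e.\ specular reflection with respect to $\langle\cdot,\cdot\rangle$. The main obstacle is not this derivation but rather the disposal of degenerate configurations not covered by it --- tangential collisions where $\langle v, N\rangle = 0$, simultaneous multi-satellite contacts, contacts at singular points of $\partial W$, and trajectories along which collision times accumulate. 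For these I would appeal to the regularity hypotheses of \cite[\S 6.1.1]{rudzis2022}, together with the standard no-accumulation and no-focusing arguments in classical billiard theory, to show that all such pathologies may be confined to a Lebesgue-null subset of initial conditions in $S_+\partial\mathcal{M}_1$, which is exactly the ``full-measure'' caveat in the statement.
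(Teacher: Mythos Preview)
The paper does not actually prove Proposition~\ref{prop:isbilliard}; immediately after stating it, the author writes that ``for more precise statements of P1--P4, the above proposition, and its proof, we refer the reader to \cite[\S3]{rudzis2022},'' and then takes the billiard dynamics as given. So there is no in-paper proof to compare against.

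That said, your sketch is the standard and correct argument, and is almost certainly what \cite[\S3]{rudzis2022} does in more detail. The key computation---identifying the Euclidean differential of the gap function with the $\langle\cdot,\cdot\rangle$-gradient via the $J/2$ and $m/2$ weights, and then matching this against the rigid-body impulse formula---is exactly the mechanism that makes configuration-space billiards equivalent to physical collisions, and you have it right (including signs). The quadratic constraint from P4 yielding $c\in\{0,-2\langle v,N\rangle\}$ and P1 eliminating $c=0$ is also correct. Your handling of the degenerate set (tangential contacts, multi-point contacts, singular boundary points, accumulation of collision times) by appeal to the regularity framework in \cite[\S6.1.1]{rudzis2022} is appropriate in spirit, though in a full write-up this is where most of the technical effort lies; the paper itself sidesteps this by citing out.
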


In particular, for almost all initial conditions, the trajectory will only make contact with the boundary of $\mathcal{M}(\Sigma,\epsilon)$ at points where there is a well-defined normal vector with respect to the inner product $\langle \cdot, \cdot \rangle$. For more precise statements of P1-P4, the above proposition, and its proof, we refer the reader to \cite[\S 3]{rudzis2022}. In what follows, we will take the billiard dynamics of the system as given and not concern ourselves further with P1-P4.

\subsubsection{Characterization of rough collision laws.} \label{sssec:charrough}
We refer to configurations $x = (x_0,x_1,x_2)$ of the disk which lie in $\partial \mathcal{M}(\Sigma,\epsilon)$ as \textit{collision configurations}. These may be shown to coincide with the configurations $x$ such that $D(x) \cap \text{Int}W(\Sigma,\epsilon) = \emptyset$, and at least one of the satellites $S_k(\epsilon; x)$ coincides with a point in the boundary of $W(\Sigma,\epsilon)$. From the conditions 1-4 imposed on the cell $\Sigma$,  
$$
\{(x_1,x_2) : x_1 \leq -\epsilon\} \subset W(\Sigma,\epsilon) \subset \{(x_1,x_2) : x_1 \leq 0\}.
$$
Because the satellites lie at unit distance from the center of the disk, if $x_2 \geq 1$, then $(x_0,x_1,x_2) \in \mathcal{M}(\Sigma,\epsilon)$, and there exists a constant $C$ such that if $x_2 \leq 1 - C\epsilon$, then $(x_0,x_1,x_2) \in \mathbb{R}^3 \smallsetminus \mathcal{M}(\Sigma,\epsilon)$. This implies that
$$
H_1^+ := \{(x_0,x_1,x_2) : x_2 \geq 1\} \subset \mathcal{M}(\Sigma,\epsilon),
$$
and 
$$
\sup_{x \in H_1^+} \inf_{y \in \mathbb{R}^3 \smallsetminus \mathcal{M}(\Sigma,\epsilon))} \|x - y\| \leq C\epsilon.
$$

Fix a shape-scale pair $(\Sigma, \epsilon)$. If $\mathcal{M}_1 = H_1^+$ and $\mathcal{M}_0 = \mathcal{M}(\Sigma,\epsilon)$, it follows from \cite[Proposition 4.8]{rudzis2022} that property \ref{property-welldef} is satisfied. Consequently, there exists a full-measure subset $\mathcal{D}_0 \subset \mathbb{R}^2 \times \mathbb{S}^2$ such that the mapping $K^{\Sigma,\epsilon} : \mathcal{D}_0 \to \mathcal{D}_0$, given by
$$
K^{\Sigma,\epsilon}(x,v) = P^{\mathcal{M}(\Sigma,\epsilon),H_1^+}(x,v),
$$
is defined. Here the right-hand side is the reflection law defined by \eqref{eq:reflaw}, where we identify $\partial H_1^+$ with $\mathbb{R}^2$ in the obvious way: $(x_0,x_1) \leftrightarrow (x_0,x_1,1)$. Intuitively, the mapping $K^{\Sigma,\epsilon}$ takes the state of the disk $(x,v)$ the instant before it interacts with the wall to the state of the disk $(x',v')$ the instant after it ceases to interact. By Proposition \ref{prop:rrefP}, $K^{\Sigma,\epsilon}$ is an involution of $\mathcal{D}_0$, and it preserves the measure $\widehat{\Lambda}^2$ on $\mathbb{R}^2 \times \mathbb{S}^2_+$ defined by
\begin{equation} \label{eq:defhatLambda}
\widehat{\Lambda}^2(dx dv) = |dx|\Lambda^2(dv) = \langle v, n_2 \rangle |dx| \sigma^2(dv),
\end{equation}
where $|dx|$ is Lebesgue measure on $\mathbb{R}^2$, $\Lambda^2(dv)$ is the measure defined by \eqref{eq:defLambda}, $\sigma^2$ is the spherical surface measure on $\mathbb{S}^2_+$ induced by the inner product $\langle \cdot, \cdot \rangle$, and the vector $n_2$ is defined as in \eqref{eq:orthonormal}.

Consider a sequence $\{(\Sigma_i,\epsilon_i)\}_{i \in \mathbb{N}}$ of shape-scale pairs, where $\epsilon_i \to 0$ as $i \to \infty$. Our goal is to describe the class of Markov kernels which may be obtained as the limit of deterministic Markov kernels associated with the sequence of mappings $K^{\Sigma_i,\epsilon_i}(x,v)$, in the sense of \eqref{eq:markovlimit}. Such a limiting Markov kernel $K$ will be referred to as a \textit{rough collision law.} The description we present in Theorem \ref{thm:colchar} relates the limit of the collision laws associated with the sequence $(\Sigma_i,\epsilon_i)$ to the limit of reflection laws associated with a related sequence $(\widetilde{\Sigma}_i,\widetilde{\epsilon}_i)$, obtained by foreshortening the walls in one direction. 

We define the half-plane $H_0^+ = \{(x_1,x_2) : x_2 \geq 0\}$, and given any shape-scale pair $(\widetilde{\Sigma}, \widetilde{\epsilon})$, we define 
$$
\mathcal{T}(\widetilde{\Sigma},\widetilde{\epsilon}) = \mathbb{R}^2 \smallsetminus \text{Int}W(\widetilde{\Sigma},\widetilde{\epsilon}).
$$ 
Note that 
$$
\sup_{x \in H_0^+} \inf_{y \in \mathbb{R}^2 \smallsetminus \mathcal{T}(\widetilde{\Sigma},\widetilde{\epsilon}))} \| x - y\| \leq \widetilde{\epsilon},
$$ 
in view of properties \ref{cell1}-\ref{cell4}. By the discussion in \cite[\S1.2.2]{rudzis2022}, if $\mathcal{M}_1 = H_0^+$ and $\mathcal{M}_0 = \mathcal{T}(\widetilde{\Sigma},\widetilde{\epsilon})$, then property \ref{property-welldef} is satisfied. Consequently, there is a full measure subset $\widetilde{\mathcal{D}}_0 \subset \mathbb{R} \times (0,\pi)$ such that the mapping $P^{\widetilde{\Sigma},\widetilde{\epsilon}} : \widetilde{\mathcal{D}}_0 \to \widetilde{\mathcal{D}}_0$, given by 
$$
P^{\widetilde{\Sigma},\widetilde{\epsilon}}(u,\theta) = P^{\mathcal{T}(\widetilde{\Sigma},\widetilde{\epsilon}), H_0^+}(u,\theta),
$$
is defined. Here the right-hand side is defined by \eqref{eq:reflaw}, where $\mathbb{R}^2$ is equipped with the usual Euclidean metric. Note that we are identifying $(u,\theta)$ with the pair $(x,v) \in \partial H_0^+ \times \mathbb{S}^1_+$, where $x = (u,0,0)$ and $\theta$ is the Euclidean angle which $v$ makes in $\mathbb{R}^2$ with the vector $e_1 = (1,0)$. By Proposition \ref{prop:rrefP}, $P^{\widetilde{\Sigma},\widetilde{\epsilon}}$ is an involution on $\widetilde{\mathcal{D}}_0$, and it preserves the measure $\widehat{\Lambda}^1$ on $\mathbb{R} \times (0,\pi)$, defined by 
\begin{equation} \label{eq:defhatLambda1}
\widehat{\Lambda}^1(dx d\theta) = |dx|\Lambda^1(d\theta) = \frac{1}{2}|dx|\sin\theta d\theta,
\end{equation}
where $|dx|$ is Lebesgue measure on $\mathbb{R}$, and $\Lambda^1$ is the measure defined by \eqref{eq:defLambda1}.

For the sequence of walls $\{W(\Sigma_i, \epsilon_i)\}_{i \in \mathbb{N}}$, the shapes of the rough features are captured by the $\Sigma_i$, while their scales are captured by the $\epsilon_i$. The next theorem says that the limiting rough collision law depends on the sequence $\{\epsilon_i\}_{i \in \mathbb{N}}$ only in a very weak sense, namely that the limiting collision law (if it exists) is completely determined by the sequence $\{\Sigma_i\}_{i \in \mathbb{N}}$ provided that the sequence $\epsilon_i$ goes to zero sufficiently fast. Moreover, the limit always exists if the sequence of cells is constant.

\begin{theorem} \label{thm:dichotomy}
\cite[Theorem 1.28]{rudzis2022} For every sequence of cells $\{\Sigma_i\}_{i \in \mathbb{N}}$, there exists a sequence of positive numbers $\{b_i\}_{i \in \mathbb{N}}$ such that exactly one of the following statements is true:
\begin{enumerate}
\item[(A)] There exists a unique Markov kernel $\widetilde{K}(v,dv')$ on $\mathbb{S}^2_+$ such that for any sequence $\epsilon_i \to 0$ with $\epsilon_i \leq b_i$ for all $i$,
\begin{equation} \label{eq:roughlim}
\delta_{K^{\Sigma_i,\epsilon_i}(x,v)}(dx' dv') \to \delta_x(dx') \widetilde{K}(v, dv') \text{ in } \MK((\mathbb{R}^2 \times \mathbb{S}^2_+)^2, \widehat{\Lambda}^2).
\end{equation}
\item[(B)] For any sequence of $\epsilon_i$ with $\epsilon_i \leq b_i$, the limit of $\delta_{K^{\Sigma,\epsilon}(x,v)}(dx' dv')$ in $\MK((\mathbb{R}^2 \times \mathbb{S}^2_+)^2,\widehat{\Lambda}^2)$ does not exist.
\end{enumerate}
Moreover, if the sequence of cells $\Sigma_i = \Sigma$ is constant, then the first option (A) always holds with $b_i = \infty$ for all $i$.
\end{theorem}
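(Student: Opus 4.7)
The plan is to isolate the analytic core in the ``moreover'' clause (convergence for a constant sequence of cells) and derive the full dichotomy from it by a standard diagonal argument. Taking the constant-cell convergence as given, for each $i$ the limit $\widetilde{K}_{\Sigma_i} := \lim_{\epsilon \to 0} K^{\Sigma_i,\epsilon}$ exists as a Markov kernel on $\mathbb{S}^2_+$ (appearing together with the trivial factor $\delta_x(dx')$). I would then choose $b_i > 0$ small enough that
\[
d^{\widehat{\Lambda}^2}\bigl(\delta_{K^{\Sigma_i,\epsilon}(x,v)}(dx' dv'),\ \delta_x(dx')\widetilde{K}_{\Sigma_i}(v,dv')\bigr) < 2^{-i} \quad \text{for all } \epsilon \leq b_i.
\]
For any admissible sequence $\epsilon_i \leq b_i$, the triangle inequality then forces $\delta_{K^{\Sigma_i,\epsilon_i}(x,v)}$ and $\delta_x(dx')\widetilde{K}_{\Sigma_i}(v,dv')$ to have identical limit behavior: either both converge to the same limit kernel, or neither converges. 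This delivers the dichotomy, with case (A) corresponding to convergence of the auxiliary sequence $\{\widetilde{K}_{\Sigma_i}\}$ in the pseudometric $d^{\widehat{\Lambda}^2}$, and case (B) to its divergence.

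The substance of the proof is thus the constant-cell case. Fixing a cell $\Sigma$, the approach is to exploit the decomposition suggested by \eqref{eq:roughcolform0}: decouple the disk-wall collision dynamics into a deterministic ``rolling'' component (the quantity $\langle v,\chi\rangle$ is conserved, which in $(\theta,\psi)$-coordinates amounts to the flip $\psi \mapsto \pi-\psi$) and a stochastic ``reflective'' component governing the evolution of $\theta$. For the second component, a geometric analysis of the satellite-wall interactions shows that, in the $\epsilon \to 0$ limit, each satellite effectively experiences a point-particle collision against a foreshortened copy of the wall, with foreshortening factor $(1 + mJ^{-1})^{-1/2}$ arising from the change of variables implicit in the kinetic inner product $\langle \cdot, \cdot \rangle$. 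I would therefore invoke the convergence theory for rough reflection laws of point particles off the half-plane $H_0^+$ (the homogeneous case of the construction reviewed in \S\ref{ssec:rough}), producing a limit kernel $\widetilde{P}$ preserving $\widehat{\Lambda}^1$, and then translate back to disk variables via the spherical coordinate map $G$. Translational invariance of the wall in the horizontal direction collapses the $x$-dependence into $\delta_x(dx')$, yielding the product form of the limit.

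The main obstacle is the rigorous decoupling in the previous step: one must show that multi-satellite collisions (where two or more satellites touch the wall nearly simultaneously) and tangential or near-grazing contacts do not contribute asymptotically, so that almost every initial $(x,v)$ with respect to $\widehat{\Lambda}^2$ undergoes a clean, single-satellite, transversal collision in the limit. This should follow from the regularity conditions imposed on $\partial W(\Sigma,\epsilon)$ together with the Lambertian-invariance property of Proposition \ref{prop:rrefP}, which prevents exceptional configurations from accumulating mass uniformly in $\epsilon$. Once this decoupling is established, the existence of the limit $\widetilde{K}_\Sigma$ of the form \eqref{eq:roughcolform0} follows from the analogous convergence statement for point-particle reflection laws, and the full dichotomy follows from the diagonal construction above.
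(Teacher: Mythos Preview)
The paper does not prove this theorem: it is quoted as \cite[Theorem 1.28]{rudzis2022} and used, together with the companion characterization Theorem~\ref{thm:colchar}, only to justify the class of collision laws studied in \S\ref{sec:billiards}. There is therefore no proof here to compare your proposal against.

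That said, your reduction of the dichotomy to the constant-cell case via a diagonal choice of the $b_i$ is correct and standard. Once one knows that $\delta_{K^{\Sigma,\epsilon}} \to \delta_x(dx')\widetilde{K}_\Sigma(v,dv')$ for each fixed $\Sigma$, choosing $b_i$ so that $d^{\widehat{\Lambda}^2}\bigl(\delta_{K^{\Sigma_i,\epsilon}},\,\delta_x\widetilde{K}_{\Sigma_i}\bigr) < 2^{-i}$ for all $\epsilon \le b_i$ makes $\{\delta_{K^{\Sigma_i,\epsilon_i}}\}$ and $\{\delta_x\widetilde{K}_{\Sigma_i}\}$ Cauchy-equivalent, so (A) holds precisely when the second sequence converges. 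You should also note, for completeness, that limits of kernels of the form $\delta_x(dx')\widetilde{K}(v,dv')$ in $d^{\widehat{\Lambda}^2}$ retain that product form; this is routine.

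Your sketch of the constant-cell case (decouple into the deterministic $\psi\mapsto\pi-\psi$ flip plus a $\theta$-dynamics governed by point-particle reflection off the foreshortened wall, and control multi-satellite and near-tangential contacts via the Lambertian invariance of Proposition~\ref{prop:rrefP}) does match the architecture that Theorem~\ref{thm:colchar} advertises for \cite{rudzis2022}. But this ``rigorous decoupling'' is the substance of that monograph, not a lemma: obtaining uniform-in-$\epsilon$ control on the exceptional configurations requires hard geometric estimates, and the soft invocation of Proposition~\ref{prop:rrefP} you propose is not by itself enough to exclude them.
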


We let $\widetilde{\mathcal{A}}$ denote the set of all Markov kernels $\widetilde{K}$ on $\mathbb{S}^2_+$ such that a limit of form  \eqref{eq:roughlim} holds, with $\epsilon_i < b_i$ for all $i \in \mathbb{N}$. Recall the spherical coordinates for $\mathbb{S}^2_+$ defined by \eqref{eq:spherecoord}. The next result characterizes the class of Markov kernels $\widetilde{\mathcal{A}}$, showing that $\widetilde{\mathcal{A}}$ is equal to $\mathcal{A}$, the class of Markov kernels introduced in \S\ref{ssec:ergodic}.

\begin{theorem} \label{thm:colchar}
\cite[Theorem 1.31]{rudzis2022} Let $\widetilde{K}$ be a Markov kernel in $\widetilde{\mathcal{A}}$. In the coordinates $(\theta,\psi)$, $\widetilde{K}$ takes the form
\begin{equation} \label{eq:roughcolform}
\widetilde{K}(\theta,\psi; d\theta' d\psi') = \widetilde{P}(\theta, d\theta') \delta_{\pi-\psi}(d\psi'), 
\end{equation}
where $\widetilde{P}(\theta, d\theta')$ is a Markov kernel on $(0,\pi)$ which has the following properties:
\begin{enumerate}
\item[(i)] The measure $\Lambda^1(d\theta) := \sin\theta d\theta$ is a reversible measure for $\widetilde{P}$.
\item[(ii)] Suppose $\{\Sigma_i\}_{i \in \mathbb{N}}$ is a sequence of cells such that $$\delta_{K^{\Sigma_i,\epsilon_i}(x,v)}(dx' dv') \to \delta_x(dx') \widetilde{K}(v, dv'),$$ for any sequence $\epsilon_i \leq b_i$, in the sense of Theorem \ref{thm:dichotomy}(A). For $i \in \mathbb{N}$, let  
\begin{equation} \label{eq:foreshortening}
\begin{split}
& \widetilde{\Sigma}_i := \{(x_1,x_2) \in \mathbb{S}^1 \times \mathbb{R} : (x_1, (1 + mJ^{-1})^{-1/2}x_2) \in \Sigma_i\} \\
& \widetilde{\epsilon_i} = (1 + mJ^{-1})^{-1/2}\epsilon_i.
\end{split}
\end{equation}
Then 
\begin{equation} \label{eq:roughlim2}
\delta_{P^{\widetilde{\Sigma}_i,\widetilde{\epsilon}_i}(x_1,\theta)}(dx_1' d\theta') \to \delta_{x_1}(dx_1')\widetilde{P}(\theta, d\theta') \text{ in } \MK((\mathbb{R} \times (0,\pi))^2, \widehat{\Lambda}^1)
\end{equation}
weakly in the space of measures on $(\mathbb{R} \times (0,\pi))^2$.
\end{enumerate}
Consequently, $\Lambda^2$ is a reversible measure for $\widetilde{K}$.

Conversely, if $\widetilde{K}$ is a Markov kernel on $\mathbb{R}^2 \times \mathbb{S}^2_+$ of form \eqref{eq:roughcolform}, where $\widetilde{P}(\theta,d\theta')$ is any Markov kernel on $(0,\pi)$ for which $\Lambda^1(d\theta)$ is a reversible measure, then $\widetilde{K}$ lies in the class $\widetilde{\mathcal{A}}$.
\end{theorem}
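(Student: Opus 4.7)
The plan is to prove the characterization by analyzing the geometry of individual collision events and then passing to the $\epsilon \to 0$ limit. First, I would establish the product form of $\widetilde{K}$ by showing that every physical collision conserves the $\chi$-component of the velocity. The key observation is that when a satellite of the disk makes contact with the wall, the impulsive force (applied at the satellite and normal to the local wall surface) produces a change in the generalized velocity $(v_0, v_1, v_2)$ which, computed using the kinetic energy inner product $\langle \cdot, \cdot \rangle$, always lies in the span of $\{n_1, n_2\}$. Concretely, an impulse of given magnitude applied at a satellite located at unit distance from the disk center imparts an angular velocity change proportional to $J^{-1}$ times the moment arm and a linear velocity change proportional to $m^{-1}$ times the normal; unwinding the definitions of $J$, $m$, and $\chi$, one sees that the resulting velocity-change vector is kinetic-energy orthogonal to $\chi$. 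This gives conservation of $\langle v, \chi \rangle$ in each collision, which after accounting for the sign convention on $K_+$ (which negates the pre-collision velocity) translates into the factor $\delta_{\pi - \psi}(d\psi')$ in the spherical coordinates of \eqref{eq:spherecoord}.

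Having established that each collision preserves $\psi$ up to the involutive symmetry $\psi \mapsto \pi - \psi$, I would next show that the residual dynamics in the $\theta$-variable coincide with a two-dimensional point-particle reflection from a rescaled wall. Restricting to a level set of $\langle v, \chi \rangle$ and projecting the 3D configuration onto the $(n_1, n_2)$-plane, the disk-and-wall system reduces to a point particle in a half-space with a rough boundary. The apparent foreshortening of the wall by the factor $(1 + mJ^{-1})^{-1/2}$ arises from comparing the kinetic-energy inner product governing 3D specular reflection with the Euclidean metric that governs the 2D projected dynamics; the ratio of these two metrics in the relevant horizontal and vertical directions is exactly $(1 + mJ^{-1})^{-1/2}$, which gives \eqref{eq:foreshortening}. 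With this geometric reduction in hand, the 3D convergence \eqref{eq:roughlim} reduces to the 2D convergence \eqref{eq:roughlim2}, which defines $\widetilde{P}$.

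Reversibility of $\Lambda^1$ for $\widetilde{P}$ is then inherited from the prelimiting reflection laws. By Proposition \ref{prop:rrefP}, each deterministic map $P^{\widetilde{\Sigma}_i, \widetilde{\epsilon}_i}$ is an involution that preserves Lambertian measure $\widehat{\Lambda}^1$, and these two properties together imply a detailed-balance identity for the associated Markov kernels $\delta_{P^{\widetilde{\Sigma}_i, \widetilde{\epsilon}_i}(x_1, \theta)}$. Since $\widehat{\Lambda}^1 = |dx| \otimes \Lambda^1$ factors, passing this identity through the $d^{\widehat{\Lambda}^1}$-limit \eqref{eq:roughlim2} yields $\Lambda^1$-reversibility of $\widetilde{P}$. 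For the converse, given any $\Lambda^1$-reversible $\widetilde{P}$ on $(0,\pi)$, the strategy is to approximate $\widetilde{P}$ by a sequence of deterministic reflection laws $P^{\widetilde{\Sigma}_i, \widetilde{\epsilon}_i}$ from suitably constructed 2D cells, then invert the foreshortening \eqref{eq:foreshortening} to obtain 3D cells $\Sigma_i$ whose collision laws converge to $\widetilde{K}$. This reduces the converse to a density statement: reflection laws from piecewise-$C^2$ cells are $d^{\widehat{\Lambda}^1}$-dense in the class of $\Lambda^1$-reversible Markov kernels.

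The hard part will be the geometric reduction underlying the first two steps. While the conservation of $\langle v, \chi \rangle$ in a single contact is a one-line calculation, a physical collision between the disk and the $\epsilon$-scale rough wall may involve a sequence of several satellite-wall contacts before separation. Tracking this multi-bounce dynamics rigorously requires a careful analysis at the scale of individual rough features, showing that the composite collision map still respects the $(\theta, \psi)$-product structure and that, in the $\epsilon \to 0$ limit, the $\theta$-component factor coincides with the 2D reflection map on the foreshortened cell. Controlling the composition of many contacts uniformly in $\epsilon$, and showing that in the limit the $\chi$-direction genuinely decouples (rather than leaking into the residual dynamics through, say, tangential or grazing bounces), is the central technical obstacle. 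The density argument supporting the converse direction is also nontrivial and is where one draws most heavily on the explicit combinatorial flexibility of the cell construction.
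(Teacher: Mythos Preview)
The paper does not contain a proof of this theorem. Theorem~\ref{thm:colchar} is quoted from the external monograph \cite{rudzis2022} (it is labeled ``\cite[Theorem 1.31]{rudzis2022}'') and is used in this paper only as a black box to justify the axiomatic form \eqref{eq:roughcolform0} of the collision law and to support the examples in \S\ref{ssec:examples}. The paper even says explicitly, at the start of \S\ref{ssec:rough}, that ``the statements and proofs of our main results do not use definitions and results presented here, [so] a greater level of detail does not seem warranted.'' There is therefore nothing in the paper to compare your proposal against.

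That said, your outline is consistent with the heuristic the paper offers at the end of \S\ref{ssec:phys}: the conservation of $\langle v,\chi\rangle$ via a Noether-type argument, the reduction to a 2D reflection law on a foreshortened wall, and the appeal to Proposition~\ref{prop:rrefP} for reversibility. You have also correctly flagged the genuine difficulty, namely that a single macroscopic collision comprises many micro-contacts and one must control this composite map uniformly in $\epsilon$; the paper does not address this, deferring entirely to \cite{rudzis2022}. If you want to verify your strategy, you would need to consult that reference rather than the present paper.
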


We refer to the transformation $\mathcal{F}$ taking a shape-scale pair $(\Sigma, \epsilon)$ to the shape-scale pair 
\begin{equation} \label{eq:Ftransformation}
\mathcal{F}(\Sigma,\epsilon) := (\widetilde{\Sigma}, \widetilde{\epsilon}),
\end{equation}
where $\widetilde{\Sigma}$ and $\widetilde{\epsilon}$ are given by \eqref{eq:foreshortening}, as the \textit{foreshortening transformation}. The transformation is clearly a bijective mapping from shape-scale pairs to shape-scale pairs. We will sometimes abuse notation, also writing $\mathcal{F}(\Sigma) = \widetilde{\Sigma}$ and $\mathcal{F}(\epsilon) = \widetilde{\epsilon}$. 

Somewhat informally, the theorem above says that the following diagram commutes.
\begin{equation} \label{eq:diag}
\begin{tikzcd}
\{(\Sigma_i,\epsilon_i)\}_{i \in \mathbb{N}} \arrow[r, "\mathcal{F}"] \arrow[d] & \{(\widetilde{\Sigma}_i, \widetilde{\epsilon_i})\}_{i \in \mathbb{N}} \arrow[l] \arrow[d] \\
\widetilde{K}(\theta,\psi; d\theta' d\psi') \arrow[r] & \widetilde{P}(\theta, d\theta') \arrow[l]
\end{tikzcd}.
\end{equation}
Here $\{(\Sigma_i,\epsilon_i)\}_{i \in \mathbb{N}}$ is a sequence of shape-scale pairs such that the statement (A) in Theorem \ref{thm:dichotomy} holds. The double arrow on the top represents the foreshortening transformation given by \eqref{eq:Ftransformation}, while the bottom double arrow represents the relation \eqref{eq:roughcolform}. The downward pointing arrows on the left and right, respectively, are realized by taking the limits \eqref{eq:roughlim} and \eqref{eq:roughlim2} as $i \to \infty$.

\subsection{Examples} \label{ssec:examples}

The following section illustrates how particular microstructures on the surface of the strip can give rise to ergodic dynamics. We begin by reviewing a general purpose lemma for constructing rough collision laws, proved in \cite{rudzis2022}. The examples presented in \S\ref{sssec:smandns}-\S\ref{sssec:focusing} were also discussed previously in \cite{rudzis2022} (although not in the context of ergodic theory). Consequently, we omit careful derivations of some of the formulas. Of these examples, the first few are not ergodic, while the remaining ones are. The last example, discussed in \S\ref{sssec:deltanubs} is new and merits a more detailed analysis. 

\subsubsection{General lemma for constructing rough collision laws}

Suppose the Markov kernel $K(v,dv')$ for the velocity process in $\hat{\mathbb{S}^2}$ is determined by some sequence of shape-scale pairs $\{(\Sigma_i,\epsilon_i)\}_{i \in \mathbb{N}}$. In view of Theorem \ref{thm:ergcol}, Theorem \ref{thm:colchar}, and \eqref{eq:diag}, to prove that $\Lambda^2$ is (resp. is not) ergodic with respect to $K$, it is sufficient to carry out the following strategy:
\begin{enumerate}
\item[1.] Find a description of the limiting reflection law $\widetilde{P}(\theta, d\theta')$ associated with the sequence $\{(\widetilde{\Sigma}_i,\widetilde{\epsilon}_i)\}_{i \in \mathbb{N}}$ obtained through the foreshortening transformation \eqref{eq:Ftransformation}; and 
\item[2.] Prove that $\Lambda^1$ is (resp. is not) an ergodic measure for $\widetilde{P}(\theta, d\theta')$.
\end{enumerate}

Step 1 may be achieved using the next lemma, proved previously in \cite{rudzis2022}. Let a shape-scale pair $(\widetilde{\Sigma},\widetilde{\epsilon})$ be fixed. Given $(x,\theta) \in \mathbb{R} \times (0,\pi)$, suppose $(x',\theta')$ is the random variable in $\mathbb{R} \times (0,\pi)$ with law $P^{\widetilde{\Sigma},\widetilde{\epsilon}}(x,\theta, d x' d \theta')$. We define $\widetilde{P}^{\widetilde{\Sigma},\widetilde{\epsilon}}(\theta,d\theta')$ to be the law of $\theta'$ as $x$ varies uniformly in the period $[0,\widetilde{\epsilon}]$. In other words, for any $f \in C_c((0,\pi))$,
\begin{equation} \label{eq:spaceaverage}
    \int_{(0,\pi)} f(\theta') \widetilde{P}^{\widetilde{\Sigma},\widetilde{\epsilon}}(\theta,\dd\theta') := \frac{1}{\widetilde{\epsilon}}\int_{[0,\widetilde{\epsilon}]} \int_{\mathbb{R} \times (0,\pi)} f(\theta')\mathbb{P}^{\widetilde{\Sigma},\widetilde{\epsilon}}(x,\theta, \dd x' \dd\theta') \dd x.
\end{equation}

\begin{lemma}\cite[Lemma 2.1]{rudzis2022} \label{lem:construct}
\begin{enumerate}
\item[(i)] If the sequence $\{P^{\widetilde{\Sigma}_i,\widetilde{\epsilon}_i}\}_{i \in \mathbb{N}}$ converges in $\MK(\mathbb{R} \times (0,\pi), \widehat{\Lambda}^1)$, then the sequence $\{\widetilde{P}^{\widetilde{\Sigma}_i,\widetilde{\epsilon}_i}\}_{i \in \mathbb{N}}$ converges in $\MK((0,\pi),\Lambda^1)$. 

\item[(ii)] If the sequence $\{\widetilde{P}^{\widetilde{\Sigma}_i,\widetilde{\epsilon}_i}\}_{i \in \mathbb{N}}$ converges in $\MK((0,\pi),\Lambda^1)$ to some Markov kernel $\widetilde{P}$, then the sequence $\{P^{\widetilde{\Sigma}_i,\widetilde{\epsilon}_i}\}_{i \in \mathbb{N}}$ converges in $\MK(\mathbb{R} \times (0,\pi), \widehat{\Lambda}^1)$ to $\id \times \widetilde{P}$.

\item[(iii)] Suppose that $\widetilde{\Sigma}_i = \widetilde{\Sigma}$ is constant. Then for any sequence $\widetilde{\epsilon}_i \to 0$, the sequence $\{P^{\widetilde{\Sigma}, \widetilde{\epsilon}_i}\}_{i \in \mathbb{N}}$ converges in $\MK(\mathbb{R} \times (0,\pi), \widehat{\Lambda}^1)$ to $\id \times \widetilde{P}$, where $\widetilde{P}$ satisfies
\begin{equation} \label{eq:unifstart}
    \widetilde{P}(\theta, \dd\theta') = \widetilde{P}^{\widetilde{\Sigma}, 1}(\theta, \dd\theta').
\end{equation}
\end{enumerate}
\end{lemma}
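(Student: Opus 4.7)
The plan rests on one common observation: by the periodicity of the wall $W(\widetilde{\Sigma}_i, \widetilde{\epsilon}_i)$, the kernel $P^{\widetilde{\Sigma}_i, \widetilde{\epsilon}_i}(x, \theta; dx' d\theta')$ is invariant under the simultaneous shift $(x, x') \mapsto (x + \widetilde{\epsilon}_i, x' + \widetilde{\epsilon}_i)$. Consequently, for any bounded measurable $\Psi(\theta, \theta')$, the function
$$h_i^{\Psi}(x, \theta) := \int \Psi(\theta, \theta') P^{\widetilde{\Sigma}_i, \widetilde{\epsilon}_i}(x, \theta; dx' d\theta')$$
is periodic in $x$ with period $\widetilde{\epsilon}_i$, and its mean over a single period equals $\int \Psi(\theta, \theta') \widetilde{P}^{\widetilde{\Sigma}_i, \widetilde{\epsilon}_i}(\theta, d\theta')$. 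The three parts of the lemma are then three incarnations of a classical two-scale homogenization argument: rapid periodic oscillation plus shrinking period forces $\int \phi(x) h_i^{\Psi}(x, \theta)\, dx$ to behave asymptotically like $\bar h^{\Psi}(\theta)\int \phi$, where $\bar h^{\Psi}$ is the limit of the periodic means.

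For part (i), I would test the assumed convergence of $P^{\widetilde{\Sigma}_i, \widetilde{\epsilon}_i}$ against separable functions $F(x, \theta, x', \theta') = \phi(x) g(\theta) f(\theta')$ with no dependence on $x'$. The pairing reduces to $\int\phi(x) h_i^{g\otimes f}(x,\theta)\,dx\,\Lambda^1(d\theta)$, in which the inner factor is $x$-periodic of period $\widetilde{\epsilon}_i \to 0$ and uniformly bounded by $\|g\|_\infty\|f\|_\infty$. The standard homogenization lemma implies that convergence of this integral for every $\phi \in C_c(\mathbb{R})$ is equivalent to convergence of the periodic means as $\theta$ varies, tested against $g \otimes f$. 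Letting $\phi$, $g$, $f$ range over countable dense subsets of their respective $C_c$ spaces and invoking Stone--Weierstrass then produces convergence of $\widetilde{P}^{\widetilde{\Sigma}_i, \widetilde{\epsilon}_i}$ in the metric $d^{\Lambda^1}$.

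For part (ii), I would run the averaging argument in reverse, starting from the assumed convergence of $\widetilde{P}^{\widetilde{\Sigma}_i, \widetilde{\epsilon}_i}$ to $\widetilde{P}$ and now testing against genuinely bivariate products $F(x, \theta, x', \theta') = \phi_1(x) \phi_2(x') g(\theta) f(\theta')$. The extra ingredient needed is a quantitative displacement bound: for almost every initial condition, $|x' - x| = O(\widetilde{\epsilon}_i)$, since the trajectory between successive encounters with $\partial H_0^+$ is confined to the slab $-\widetilde{\epsilon}_i \leq x_2 \leq 0$, and scaling reduces the question to the $\widehat{\Lambda}^1$-a.e.\ finiteness of the unit-scale displacement, which follows from the regularity conditions on cells in \cite[\S 6.1.1]{rudzis2022}. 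Uniform continuity of $\phi_2$ on compact sets then permits replacing $\phi_2(x')$ by $\phi_2(x)$ with $o(1)$ error, reducing the integral to the separable setting of part (i). Passing to the limit via the homogenization lemma gives $\int \phi_1(x)\phi_2(x) g(\theta)\int f(\theta')\widetilde{P}(\theta, d\theta')\,dx\,\Lambda^1(d\theta)$, which is exactly the pairing of $F$ with $\id \times \widetilde{P}$.

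Part (iii) is a one-line consequence of the scale invariance of specular reflection combined with part (ii). When the cell $\widetilde{\Sigma}$ is fixed, the wall $W(\widetilde{\Sigma}, \widetilde{\epsilon})$ is precisely the $\widetilde{\epsilon}$-dilation of $W(\widetilde{\Sigma}, 1)$; this dilation intertwines billiard trajectories and leaves angles untouched, so averaging over $x \in [0, \widetilde{\epsilon}]$ is equivalent to averaging over the rescaled variable in $[0, 1]$. This yields $\widetilde{P}^{\widetilde{\Sigma}, \widetilde{\epsilon}_i}(\theta, d\theta') = \widetilde{P}^{\widetilde{\Sigma}, 1}(\theta, d\theta')$ for every $i$, so the sequence of spatial averages is constant and trivially convergent, and part (ii) finishes the job. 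The main technical obstacle is the displacement bound in part (ii): scaling delivers it cleanly for a fixed cell, but obtaining uniform control as $\widetilde{\Sigma}_i$ varies requires either appropriate regularity of the sequence of cells or a more delicate reduction using the $\widehat{\Lambda}^1$-preservation property of $P^{\widetilde{\Sigma}_i,\widetilde{\epsilon}_i}$ from Proposition \ref{prop:rrefP}.
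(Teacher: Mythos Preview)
The paper does not prove this lemma; it is quoted verbatim from \cite[Lemma 2.1]{rudzis2022} and used as a black box in \S\ref{ssec:examples}. There is therefore no in-paper argument to compare your proposal against.

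That said, your outline is essentially the right shape: periodicity of the wall makes the angular-only pairing $h_i^\Psi(x,\theta)$ periodic in $x$ with shrinking period, and the homogenization/averaging step is exactly what drives (i) and (ii); the scale-invariance observation for (iii) is correct and reduces it to (ii). The one soft spot you already flag is real: for (ii) with a varying sequence of cells, the displacement bound $|x'-x| = O(\widetilde{\epsilon}_i)$ is not uniform in the initial condition, so replacing $\phi_2(x')$ by $\phi_2(x)$ needs more than pointwise a.e.\ finiteness of the unit-scale excursion. One has to control the tail of the displacement in $\widehat{\Lambda}^1$-measure, which is where the invariance of $\widehat{\Lambda}^1$ under $P^{\widetilde{\Sigma}_i,\widetilde{\epsilon}_i}$ (Proposition \ref{prop:rrefP}) and the cell regularity assumptions from \cite[\S6.1.1]{rudzis2022} are actually used in the original proof. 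Your sketch would need to make that step quantitative to close the argument.
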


\subsubsection{Smooth and no-slip collisions} \label{sssec:smandns} The Markov kernel $K$ is determined by the rough collision law $K_+$, as in \eqref{eq:twosides}. The two simplest examples of rough collision laws are the \textit{smooth} (or classical) collision law 
$$
K_+^\text{sm}(\theta,\psi; d\theta' d\psi') := \delta_{(\pi - \theta, \pi - \psi)}(d\theta' d\psi')
$$
and the so-called \textit{no-slip} collision law 
$$
K_+^{\text{ns}}(\theta, \psi; d\theta' d\psi') := \delta_{(\theta, \pi - \psi)}(d\theta' d\psi').
$$
Both describe collisions which are deterministic. The first of these describes collisions in which the wall is completely smooth, i.e. the impact direction is normal to the surface of the wall. Trivially, the sequence of shape-scale pairs $(\Sigma_i,\epsilon_i)$ where $\Sigma_i = \Sigma^{\text{sm}} := \mathbb{S}^1 \times (-\infty,0]$, and $\epsilon_i$ is a sequence of positive numbers converging to zero, gives rise to such collisions.
The reflection law factor in the decomposition \eqref{eq:roughcolform} is the classical \textit{specular} (angle of incidence equals angle of reflection) reflection law $\widetilde{P}^{\text{sp}}(\theta, d\theta') := \delta_{\pi - \theta}(d\theta')$, describing the reflection of a point-particle from a smooth surface.

The no-slip collision may be viewed as a kind of idealized frictional collision. Early studies of no-slip collisions appear in the work of F. B. Pidduck \cite{pidduck1922}, R. L. Garwin \cite{garwin1969}, and D. S. Broomhead and E. Gutkin \cite{broomheadgutkin1993}. This type of collision dynamics has received more recent attention as well, from both applied \cite{hefner2004superball, cross2005, taveres2007, harrowellwang2021} and theoretical \cite{coxferesward2016diffgeo, coxfereszhang2018stability, CCCF2020cylinder, coxfereszhao2021rolling, ahmedcoxwang2022vmass} perspectives. In the decomposition \eqref{eq:roughcolform} of $K^{\text{ns}}_+$, the corresponding reflection law factor is simply the identity Markov kernel $\widetilde{P}^{\text{rr}}(\theta, d\theta') = \delta_\theta(d\theta')$, which we also refer to as the \textit{retroreflection} law. Retroreflections do occur in nature, at least to a high degree of approximation, with examples including cat's eyes and street signs with reflective paint \cite{retrorefWikipedia}. Unlike smooth collisions, it is far from trivial to describe a sequence of microstructures which gives rise to no-slip collisions. Nonetheless, explicit descriptions of periodic microstructures giving rise to retroreflection are obtained in \cite[Chapter 9]{plakhov2012}. By applying the inverse of the foreshortening transformation \eqref{eq:Ftransformation} to such a microstructure, one may in principle obtain a microstructure which gives rise to no-slip collisions.

\begin{figure}
    \centering
    \includegraphics[width = 0.48\linewidth]{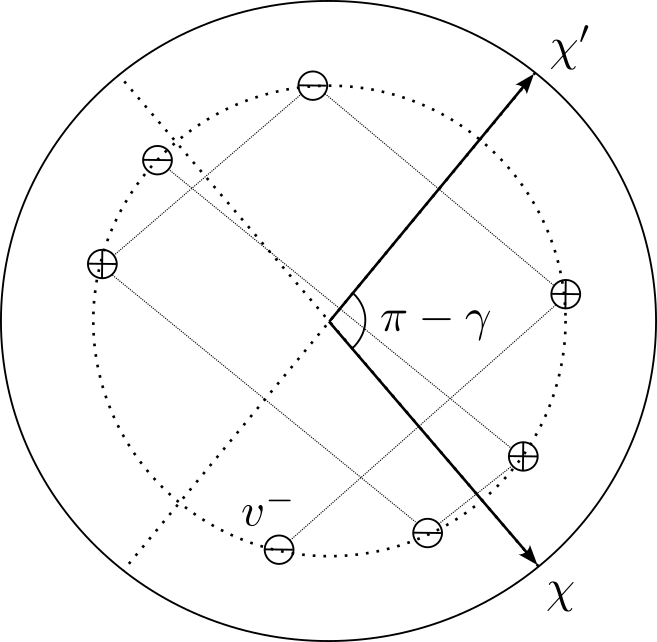}
    \vspace{0.3in}
    \caption{Above is shown the kinetic energy unit sphere $\mathbb{S}^2$, oriented so that the unit vector $n_2 = (0,0,1)$, points ``out of the board.'' The vectors $\chi = \sqrt{2(J+m)^{-1}}(1,-1,0)$ and $\chi' = R(\chi) = \sqrt{2(J+m)^{-1}}(1,1,0)$ determine the locally conserved quantities \eqref{eq:conserved1} and \eqref{eq:conserved2}, respectively. The angle between $\chi$ and $\chi'$, with respect to the inner product $\langle \cdot, \cdot \rangle$, is $\pi - \gamma$, where $\gamma$ is defined by \eqref{eq:gammadef}. The velocity process alternates between points in the hemispheres $\mathbb{S}^2_-$ and $\mathbb{S}^2_+$, represented above by the points labeled $\ominus$ and $\oplus$, respectively. Under the no-slip collision law, the projected images of the velocities alternate between reflections through the vector $\chi$ and through the vector $\chi'$. Consequently, if $v^-$ is the initial velocity, the subsequent velocities will be concentrated in the pair of circles $\mathcal{C}_{v^-} := \{v \in \mathbb{S}^2 : \langle v, n_2\rangle = \pm \langle v^-,n_2\rangle\}$ for all time. If $\gamma/\pi$ is irrational, then uniform measure on $\mathcal{C}_{v^-}$ is ergodic for any $v^- \in \hat{\mathbb{S}}^2$, whereas if $\gamma/\pi$ is rational, then any ergodic measure will be uniform over a finite subset of $\mathcal{C}_{v^-}$.} 
    \label{fig:noslip}
\end{figure}

Let $K^{\text{sm}}$ and $K^{\text{ns}}$ denote the Markov kernels for the velocity processes determined by the collision laws $K_+^{\text{sm}}$ and $K_+^{\text{ns}}$, respectively. The measure $\Lambda^2$ is ergodic neither for the dynamics determined by $K^{\text{sm}}$ nor $K^{\text{ns}}$. Both these collision laws do not satisfy the hypothesis of Theorem \ref{thm:ergcol}, insofar as the reflection factor $\widetilde{P}$ is not ergodic. Passing from the coordinate representation back to expressions in terms of velocities and applying \eqref{eq:twosides}, it is not hard to show that Markov kernels are given by 
$$
K^{\text{sm}}(v, dv') = v - 2\langle v, n_2 \rangle n_2, \quad \quad \quad K^{\text{ns}}(v, dv') = \begin{cases} -v + 2\langle v, \chi \rangle \chi & \text{ if } v_2 < 0, \\
-v + 2\langle v, \chi' \rangle \chi' & \text{ if } v_2 > 0,
\end{cases}
$$
where $\chi$ and $n_2$ are given by \eqref{eq:orthonormal}, $\chi' = R(\chi)$, and $R$ is the 180 degree rotation defined by \eqref{eq:180degree}. If the initial velocity of the system is $v^-$, then the subsequent velocities under the dynamics determined by the corresponding Markov kernel $K^{\text{sm}}$ will be concentrated in two-point set $\{v^-, v^- - 2\langle v^-, n_2 \rangle n_2\}$ for all time. Under $K^{\text{ns}}$, the dynamical evolution is equivalent to the sequence of reflections, described in Figure \ref{fig:noslip}.  For all time the velocities will be concentrated in the pair of circles 
$$
\mathcal{C}_{v^-} := \{v \in \mathbb{S}^2 : \langle v,n_2\rangle = \pm \langle v^-,n_2\rangle \}.
$$ 
If $\gamma/\pi$ is irrational, then uniform measure on $\mathcal{C}_{v^-}$ is ergodic for any $v^- \in \hat{\mathbb{S}}^2$, whereas if $\gamma/\pi$ is rational, then any ergodic measure will be uniform over a finite subset of $\mathcal{C}_{v^-}$.

\subsubsection{Rectangular teeth} \label{sssec:rec} Consider the sequence of walls obtained by putting $\epsilon$-periodic rectangular notches along the boundary of the half-space $x_2 \leq 0$. More precisely, consider periodic walls of form 
$$
\widetilde{W}_i = W(\widetilde{\Sigma}, \widetilde{\epsilon}_i) = \{(x_1,x_2) \in \mathbb{R}^2 : x_2 \leq t_i(x_1)\},
$$
Here for each $i \in \mathbb{N}$ the ``tooth function'' is given by
\begin{equation}
    t_i(x) = \begin{cases}
    0 & \text{ if } 2k \widetilde{\epsilon}_i \leq x \leq (2k + 1) \widetilde{\epsilon}_i, \\
    -r\widetilde{\epsilon}_i & \text{ if } (2k + 1)\widetilde{\epsilon}_i < x < (2k+2)\widetilde{\epsilon}_i 
    \end{cases} \quad \text{ for } k \in \mathbb{Z},
\end{equation}
where $r > 0$ is the ratio of a height of a tooth to its width and the $\widetilde{\epsilon}_i$ are positive and converge to zero.

The limiting collision law will be some random selection between specular and no-slip collisions. By Lemma \ref{lem:construct}, to determine the limiting reflection law $P_r(\theta,d\theta')$ it is sufficient to compute $\widetilde{P}^{\widetilde{\Sigma},1}(\theta, d\theta')$, which is a matter of trigonometry. Figure \ref{fig:microstructures}(B) suggests how to compute this reflection law. We obtain the following formula: 
$$
P_r(\theta, d\theta') = p_r(\theta) \delta_{\pi - \theta}(d\theta') + (1 - p_r(\theta))\delta_{\theta}(d\theta'),
$$
where 
$$
p_r(\theta) = \begin{cases}
\frac{1}{2} + \frac{1}{2}\{2r|\cot\theta|\} & \text{ if } \lfloor 2 r |\cot\theta| \rfloor \text{ is even}, \\
1 - \frac{1}{2}\{2r|\cot\theta|\} & \text{ if } \lfloor 2 r |\cot\theta| \rfloor \text{ is odd}.
\end{cases}
$$
We remark that the foreshortening transformation \eqref{eq:Ftransformation} leaves the class of walls consisting of rectangular teeth with parameter $r$ invariant, only resulting in a change of the parameter $r$. It follows that the corresponding collision law produced by the rough teeth microstructure is 
$$
K^r_+(\theta,\psi; d\theta' d\psi') := \delta_{\pi - \psi}(d\psi')P_{\hat{r}}(\theta, d\theta'), 
$$
where $\hat{r} = (1 + m/J)^{1/2}r$. Let $K^r$ denote the Markov kernel on $\hat{\mathbb{S}}^2$ determined by \eqref{eq:twosides} with $K_+ = K_+^r$. By observing that $K_r^+(\theta,\psi; \cdot) = p_{\hat{r}}(\theta)K^{\text{sm}}_+(\theta,\psi; \cdot) + (1 - p_{\hat{r}}(\theta))K^{\text{ns}}_+(\theta,\psi; \cdot)$, a simple argument shows that if $\gamma/\pi$ is irrational, then uniform measure on the sets $\mathcal{C}_v$, for $v \in \mathbb{S}^2_+$ will be an ergodic measure for the resulting dynamics.

\begin{figure}
    \centering
    \includegraphics[width = 0.6\linewidth]{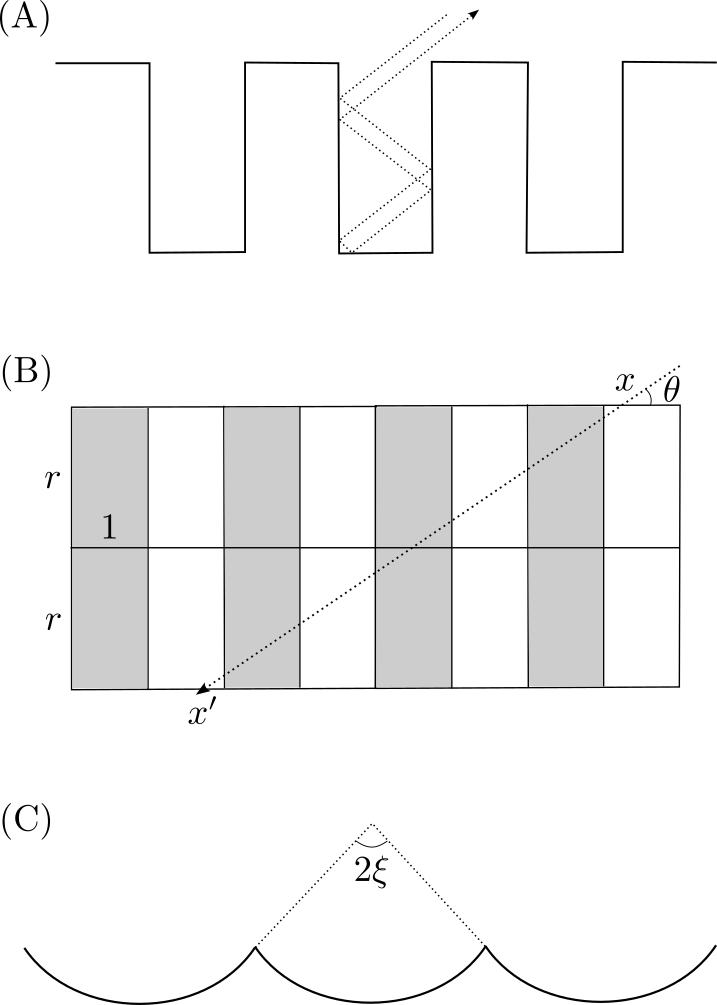}

    \vspace{0.4in}
    
    \caption{(A) Rectangular teeth microstructure. (B) Derivation of the reflection law for the rectangular teeth microstructure: Starting from the initial condition $(x,\theta)$, if the unfolded trajectory between two teeth ends at a point $x'$ in a white region, then the angle of exit $\theta'$ will equal $\pi - \theta$. Otherwise, if $x'$ lies in a gray region, then $\theta' = \theta$. (C) Focusing circular arc microstructure.} 
    \label{fig:microstructures}
\end{figure}

\subsubsection{Focusing circular and elliptical arcs.} \label{sssec:focusing} First let us describe point-particle reflections from a microstructure formed from a periodic pattern of focusing circular arcs. Each arc is focusing (i.e. concave up), and subtends an angle of $2\xi$, as in Figure \ref{fig:microstructures}(C). The derivation of the reflection law $P = P^\text{ca}$ associated with this microstructure is elementary, but somewhat involved. We refer the reader to \cite[\S 2.3.3]{rudzis2022} for details, and give the formula for $P^\text{ca}$ below. Let
$$
\text{Arccos}(x) := \begin{cases}
    2\pi - \arccos(x) & \text{ if } -1 \leq x < 0, \\
    -\arccos(x) & \text{ if } 0 \leq x \leq 1,
\end{cases}
$$
where $\arccos(x)$ is the usual inverse cosine function taking values in the interval $[0,\pi]$.
For each $\theta \in (0,\pi)$, we define the random variable 
$$
\Theta'_\theta := \begin{cases}
    \theta + \lceil \frac{\xi - \Xi_\theta}{2|\pi + \Xi_\theta - \theta|} \rceil 2(\pi + \Xi_\theta - \theta) & \text{ if } \theta \geq \frac{\pi}{2} + \Xi_\theta, \\
    \theta + \lceil \frac{\xi + \Xi_\theta}{2|\theta - \Xi_\theta|} \rceil 2(\theta - \Xi_\theta) & \text{ if } \theta < \frac{\pi}{2} + \Xi_\theta,
    \end{cases}
$$
where 
$$
\Xi_\theta := \text{Arccos}\left(X\cos(\theta - \xi) + (1 - X)\cos(\theta + \xi) \right),
$$
and $X$ is uniform in $[-1,1]$. For each $\theta \in (0,\pi)$, $P^{\text{ca}}(\theta, d\theta')$ is the law of $\Theta'_\theta$. 

To obtain a corresponding collision law, define a periodic microstructure consisting of elliptical arcs, such that after applying the foreshortening transformation \eqref{eq:Ftransformation}, we obtain the microstructure consisting of circular arcs, defined above. More precisely, consider a sequence of positive numbers $\epsilon_n \to 0$ and an $\epsilon_n$-periodic function $t_n$ defined by  
$$
t_n(x) = \frac{\epsilon_n}{2}\cot\xi - (1+m/J)^{-1/2}\sqrt{\frac{\epsilon_n^2}{4}\csc^2\xi - \left(x - \frac{\epsilon_n}{2}\right)^2} \text{ for } x \in [0, \epsilon_n],
$$
and extended periodically to all $x \in \mathbb{R}$. Then the resulting collision law is given by 
$$
K^{\text{ea}}_+(\theta,\psi; d\theta' d\psi') = P^{\text{ca}}(\theta, d\theta') \delta_{\pi - \psi}(d\psi').
$$
We let $K^{\text{ea}}(\theta, \psi; d\theta' d\psi')$ be defined by \eqref{eq:twosides} with $K_+ = K_+^{\text{ea}}$. Since $P^{\text{ca}}(\theta, d\theta')$ is nonsingular, one may show using Theorem \ref{thm:ergcol} that $\Lambda^2$ is ergodic with respect to the dynamics induced by $K^{\text{ea}}$.

\subsubsection{Cells with $\delta$-nubs} \label{sssec:deltanubs} Given any periodic microstructure, by inserting small dispersive arcs (the ``nubs'') into the boundary of the microstructure, one obtains collision dynamics which are ergodic. Moreover, up to a small error, the collision dynamics agree with those determined by the original microstructure (see Theorem \ref{thm:refnub} and Corollary \ref{cor:refnub}).

To describe this construction precisely, fix $\delta > 0$ and let $h : [-\delta, \delta] \to \mathbb{R}$ be a smooth, strictly concave-down function such that, for some constant $c > 0$, $h(-\delta) = h(\delta) \geq -c\delta$ and the range of $h$ is $[h(\delta),0]$. Here, by ``smooth,'' we mean that $h$ extends to some infinitely differentiable function on some open interval containing $[-\delta,\delta]$. Let $\Gamma_h$ denote the graph of $h$. We refer to $\Gamma_h$ as the \textit{nub shape}.

Also let $n_h = (n_h^1, n_h^2) : [-\delta,\delta] \to \mathbb{S}^1 \subset \mathbb{R}^2$ denote the unit normal to the graph of $h$. Note that $n_h$ is smooth, and the first component $n_h^1$ is strictly decreasing.

Fix a cell $\Sigma$, and consider the 1-periodic wall $W(\Sigma,1)$. To avoid certain technicalities, we will assume that the boundary of $W(\Sigma,1)$ consists of a single piecewise $C^2$-curve. (Thus, the wall itself must be connected.) We will also assume without loss of generality that the origin $(0,0)$ is a point of maximal height on the wall. We construct a new 1-periodic wall from the old as follows. For $k \in \mathbb{Z}$, let the portion of the boundary of the new wall below the interval $[-\delta + k, \delta + k]$ be given by the arc $\Gamma_h^{(k)} := \Gamma_h + k e_1 = \{(x_1,x_2) : (x_1 - k, x_2) \in \Gamma_h\}$. Then join the endpoints of each pair of adjacent arcs $\Gamma_h^{(k)}$ and $\Gamma_h^{(k+1)}$ by a similar copy of the portion of the boundary of $W$ which runs between the points $(0,0)$ and $(1,0)$, suitably translated and rescaled, as in Figure \ref{fig:nub1}. The resulting curve forms the boundary of a new wall $W^\delta \subset \mathbb{R} \times (-\infty,0]$. Since the wall is 1-periodic and piecewise smooth, there is a unique cell $\Sigma^\delta$ such that $W^\delta = W(\Sigma^\delta,1)$.

\begin{remark} \label{rem:shortnub} \normalfont
For the purpose of applying Theorem \ref{thm:colchar}, an important property of the above construction is that the foreshortening transformation $\mathcal{F}$, defined by \eqref{eq:Ftransformation}, preserves the class of cells with $\delta$-nubs. In fact, it is easy to check that the following diagram commutes:
\begin{equation} \label{eq:nubdiag}
\begin{tikzcd}
(\Sigma,\epsilon) \arrow[r, "\mathcal{F}"] \arrow[d, "h"] & (\widetilde{\Sigma}, \widetilde{\epsilon}) \arrow[d, "\widetilde{h}"] \\
(\Sigma^\delta,\epsilon) \arrow[r, "\mathcal{F}"] & (\widetilde{\Sigma}^\delta, \widetilde{\epsilon}) 
\end{tikzcd},
\end{equation}
where the downward pointing arrow on the left represents attaching a $\delta$-nub with nub shape given by $h$, and the downward pointing arrow on the right represents attaching a $\delta$-nub with nub shape given by $\widetilde{h} := (1 + mJ^{-1})^{1/2}h$. That is, if we start with some shape-scale pair $(\Sigma,\epsilon)$ and attach a $\delta$-nub with shape given by $h$, as in the construction above, and then apply the foreshortening transformation $\mathcal{F}$, the resulting shape-scale pair is the same as if we first apply $\mathcal{F}$ and then attach a $\delta$-nub with shape given by $\widetilde{h}$.

\end{remark}

\begin{figure}
    \centering
    \includegraphics[width = \linewidth]{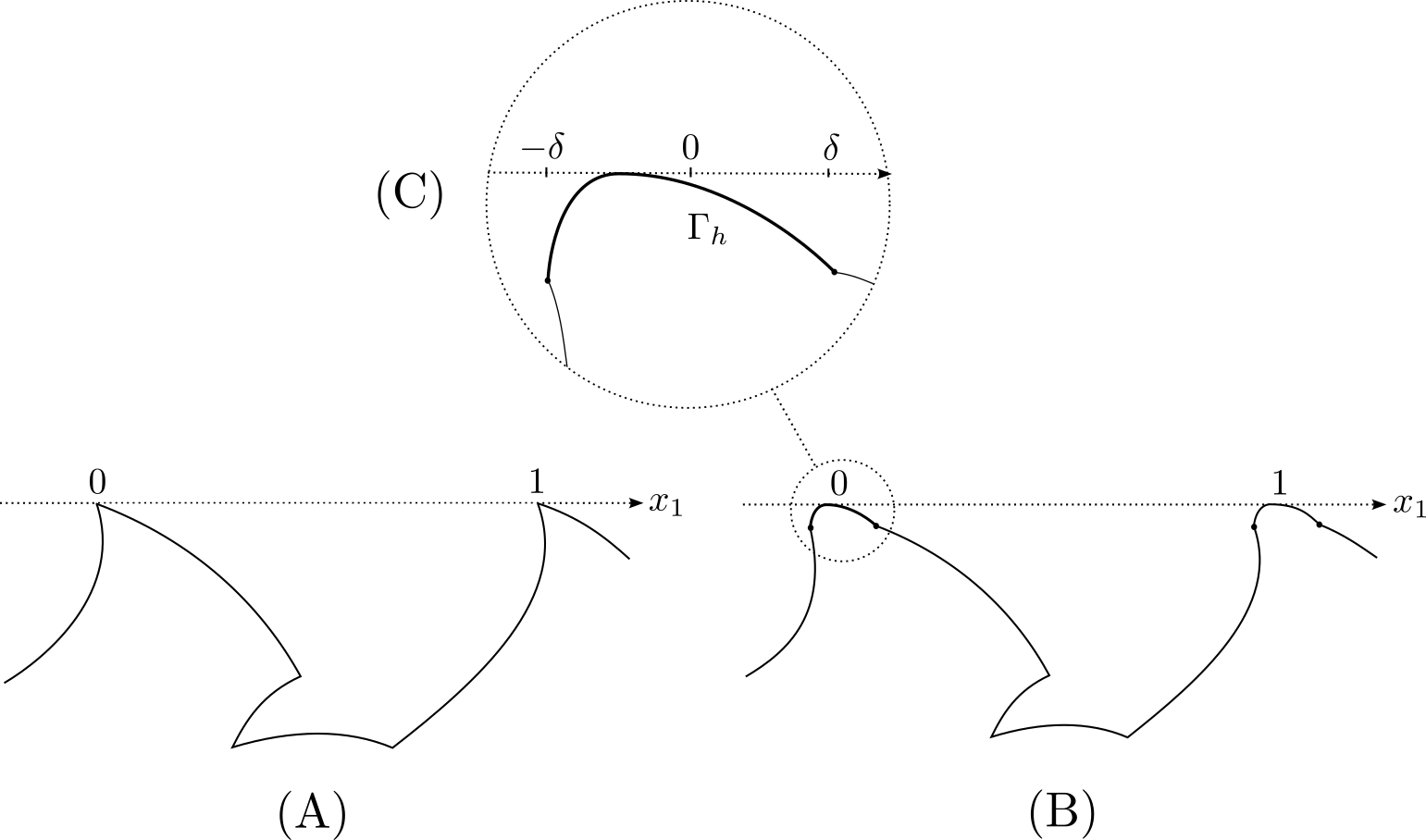}
    \caption{(A) Original periodic microstructure $W$. (B) New microstructure $W^\delta$. (C) Detail of a $\delta$-nub.}
    \label{fig:nub1}
\end{figure}

Recall from Lemma \ref{lem:construct} that if $(\Sigma_i,\epsilon_i)$ is a sequence of shape-scale pairs where $\Sigma_i = \Sigma$ is constant, then the limiting reflection law always exists and is given by $\widetilde{P}^{\Sigma, 1}(\theta, d\theta')$, defined by \eqref{eq:unifstart}. Thus the following theorem establishes ergodicity for reflection laws obtained from the constant sequence of shapes $\Sigma_i = \Sigma^\delta$.

\begin{theorem} \label{thm:refnub}
Put $P(\theta, d\theta') = \widetilde{P}^{\Sigma, 1}(\theta, d\theta')$ and $P^\delta(\theta, d\theta') = \widetilde{P}^{\Sigma^\delta, 1}(\theta, d\theta')$.

(i) $\Lambda^1(d\theta)$ is an ergodic measure for $P^\delta(\theta, d\theta')$.

(ii) There exist constants $c, \delta_0 > 0$ and a subset $\widetilde{A} \subset (0,\pi)$ with $|(0,\pi) \smallsetminus \widetilde{A}| \leq c\delta^{2/9}$ such that, for all $\delta \in (0,\delta_0)$, $\theta \in \widetilde{A}$, and $B \subset (0,\pi)$,
$$
|P(\theta,B) - P^\delta(\theta,B)| \leq c\delta^{2/9}.
$$
\end{theorem}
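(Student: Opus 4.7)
Part (i) exploits the dispersive geometry of the $\delta$-nubs to produce a minorization on $P^\delta$ which, combined with the reversibility of $\Lambda^1$, yields ergodicity. Part (ii) is a coupling/perturbation argument: the walls $W$ and $W^\delta$ differ only in small regions (the $O(\delta)$-neighborhoods of each lattice point housing a nub, together with the similarity rescaling of the connecting portion), so the reflection laws are close in total variation for most entry angles.

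\emph{For part (i),} I would fix an entry angle $\theta$ and examine the set of starting positions $x$ whose first-bounce trajectory lies on a nub: this is a union of intervals of total length $\Theta(\delta)$. Since $\Gamma_h$ is strictly concave down, the unit normal field $n_h$ varies strictly monotonically and sweeps out a nondegenerate interval of directions; thus the map from first-bounce position to exit direction is a $C^1$-diffeomorphism, and the pushforward of uniform measure yields an absolutely continuous distribution on an interval $I \subset (0,\pi)$ of length bounded below independently of $\delta$, with density bounded below by some $c = c(\delta, h) > 0$. This gives the minorization
\[
P^\delta(\theta, \cdot) \geq c\, \mathbf{1}_I \,\Lambda^1
\]
for $\theta$ in a set $A$ of positive $\Lambda^1$-measure. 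Combined with the reversibility of $\Lambda^1$ for $P^\delta$ (which follows from the time-reversibility of the underlying billiard dynamics via Proposition \ref{prop:rrefP}) and a standard iteration/irreducibility argument to extend the minorization across the state space, this implies that any $P^\delta$-invariant set is $\Lambda^1$-null or $\Lambda^1$-full.

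\emph{For part (ii),} set $\widetilde{A} := (\delta^{7/9}, \pi - \delta^{7/9})$, so $|(0,\pi) \smallsetminus \widetilde{A}| = 2\delta^{7/9} \leq c\delta^{2/9}$ for $\delta$ small. For $\theta \in \widetilde{A}$, the number of bounces $N(\theta)$ of a $W$-trajectory starting from $(x, \theta)$ is bounded by $C/\sin\theta \leq C\delta^{-7/9}$, uniformly in $x$ outside a null set. For each such bounce, a Lipschitz-type bound on the billiard flow (valid away from near-tangential collisions) shows that the set of $x \in [0,1]$ for which the bounce lies in the ``modification region'' (where $W$ and $W^\delta$ differ non-trivially) has Lebesgue measure $O(\delta)$. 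Summing over the $N(\theta)$ bounces, the total bad measure is $O(N(\theta) \delta) = O(\delta^{2/9})$; for $x$ outside this bad set, the $W$- and $W^\delta$-trajectories agree (up to the similarity rescaling of the connecting portions) and produce identical exit angles, giving the total variation bound.

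\emph{Main obstacle.} The hardest part is the quantitative billiard-flow analysis in (ii): justifying the Lipschitz bounds on bounce positions, the bound $N(\theta) \leq C/\sin\theta$, and the claim that the $W$- and $W^\delta$-trajectories yield identical exit angles outside the bad set. These all require careful exclusion of near-tangential collisions and a correct handling of the similarity rescaling on the connecting portions. The iteration argument in (i), extending the one-step minorization to full ergodicity, also relies non-trivially on reversibility and some form of irreducibility inherited from the dispersive nubs.
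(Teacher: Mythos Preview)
Your approach to part (i) is in the right spirit---using the strict concavity of the nub to get a local diffeomorphism from impact position to exit angle---but it is less complete than the paper's argument and leaves a real gap at the ``iteration/irreducibility'' step. The paper does not use a minorization-plus-iteration scheme. Instead, it shows that for \emph{every} $\theta\in(0,\pi)$ (not just a positive-measure set), the trajectory hitting the nub at its point of maximal height $u^*$ reflects exactly once and yields a $C^1$ local diffeomorphism $(x,\theta)\mapsto(\theta,\theta')$ near $(\theta,\pi-\theta)$; thus the joint measure $\mu^\delta(d\theta\,d\theta')=P^\delta(\theta,d\theta')\Lambda^1(d\theta)$ dominates Lebesgue on an open set whose $\theta$-projection is all of $(0,\pi)$. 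Ergodicity then follows from a short connectedness argument: any invariant set $A$ must be locally full or locally null at every point, and since $(0,\pi)$ is connected, $\Lambda^1(A)\in\{0,1\}$. Your proposed route could in principle be made to work, but the interval $I$ in your minorization depends on $\theta$, and you give no mechanism to link these intervals across the state space; the paper's topological argument avoids this entirely.

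Your approach to part (ii) has a genuine gap and misses the key structural observation that makes the paper's proof work. You propose to bound the number of bounces by $N(\theta)\leq C/\sin\theta$ and then argue bounce-by-bounce that each reflection has $O(\delta)$ chance of landing in the modified region. But no such bound on $N(\theta)$ is available for a general piecewise-$C^2$ cell $\Sigma$: the wall may have cusps, corners, or focusing arcs that produce arbitrarily many reflections even for non-grazing entry angles. Likewise, the Lipschitz bounds on bounce positions that you need fail near tangential collisions and singularities, and you give no way to control the measure of such configurations. The paper sidesteps all of this with two clean geometric observations: (a) because the nub normal always has a positive vertical component, a trajectory can touch a nub at most on its first and last reflections; and (b) the portion of $\partial W^\delta$ between adjacent nubs is an exact \emph{similar} copy of the original $\partial W$. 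Hence, conditional on missing the nubs entirely (an event whose complement has probability $O(\delta^{2/3})$ after restricting $\theta$ to $(\delta^{1/3},\pi-\delta^{1/3})$), the $W^\delta$-exit angle equals the $W$-exit angle exactly---no bounce-tracking or Lipschitz estimates are needed. The $\delta^{2/9}$ exponent then comes from a Chebyshev-type argument upgrading the $\widehat\Lambda^1$-bound on the bad set to a pointwise-in-$\theta$ bound on a large set $\widetilde A$.
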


\begin{remark} \normalfont
Part (i) of Theorem \ref{thm:refnub} is quite similar to \cite[Proposition 5.6]{feres2007RW}, which says that in a random billiard system similar to ours, if the microstructure contains a point of maximal height at which the curvature is positive and nonzero, then the Markov chain is ergodic. Part (ii) of Theorem \ref{thm:refnub} is, as far as we know, new.
\end{remark}

Let $K_+$ and $K_+^\delta$ be the collision laws determined by the sequences of shape-scale pairs $\{(\Sigma,\epsilon_i)\}_{i \in \mathbb{N}}$ and $\{(\Sigma^\delta,\epsilon_i)\}_{i \in \mathbb{N}}$, respectively. In view of Remark \ref{rem:shortnub}, the foreshortening transformation takes walls with $\delta$-nubs to walls with $\delta$-nubs. Consequently, the reflection factor in the decomposition \eqref{eq:roughcolform} for $K_+^\delta$ will be of form $\widetilde{P}^{\widetilde{\Sigma}^{\delta},1}$. From this, we get the following corollary.

\begin{corollary} \label{cor:refnub}
Assume that $\gamma/\pi$ is irrational. Given a cell $\Sigma$, consider the sequences of shape-scale pairs $\{(\Sigma_i, \epsilon_i)\}_{i \in \mathbb{N}}$ and $\{(\Sigma'_i, \epsilon'_i)\}_{i \in \mathbb{N}}$, where $\Sigma_i = \Sigma$ and $\Sigma_i' = \Sigma^\delta$ for all $i \in \mathbb{N}$. Let $K_+$ and $K_+^\delta$ be the rough collision laws given rise to by the first and second sequence respectively, and let $K$ and $K^\delta$ denote the corresponding Markov kernels on $\hat{\mathbb{S}}^2$ given by \eqref{eq:twosides}. Then

(i) $\Lambda^2$ is the unique ergodic measure for $K^\delta$, and 

(ii) There exists constants $c, \delta_0 > 0$ and a subset $\widetilde{R} \subset \mathbb{S}^2_+$ with $\sigma^2(\mathbb{S}^2_+ \smallsetminus \widetilde{R}) \leq c\delta^{2/9}$ such that, for all $\delta \in (0,\delta_0)$, $\theta \in \widetilde{R}$, and $B \subset \mathbb{S}^2_+$, 
$$
|K_+(v,B) - K_+^\delta(v,B)| \leq c\delta^{2/9}.
$$
\end{corollary}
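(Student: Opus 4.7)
The strategy is to push the ergodicity and approximation bounds of Theorem \ref{thm:refnub} from the reflection-law level up to the collision-law level via Theorem \ref{thm:colchar}, Theorem \ref{thm:ergcol}, and Remark \ref{rem:nonsing}. The central structural observation is the commutative diagram \eqref{eq:nubdiag} of Remark \ref{rem:shortnub}: the foreshortening transformation $\mathcal{F}$ maps a cell-with-$\delta$-nubs to another cell-with-$\delta$-nubs (with the nub shape rescaled by $(1+mJ^{-1})^{1/2}$). Combined with Lemma \ref{lem:construct}(iii), which identifies the reflection-law factor of $K_+^\delta$ as $P^\delta := \widetilde{P}^{\mathcal{F}(\Sigma^\delta),1}$, this places $P^\delta$ squarely within the scope of Theorem \ref{thm:refnub}.

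For part (i), I would write out the coordinate representation \eqref{eq:roughcolform0},
\[
K_+^\delta(\theta,\psi;d\theta' d\psi') = P^\delta(\theta,d\theta')\,\delta_{\pi-\psi}(d\psi'),
\]
and similarly for $K_+$. Theorem \ref{thm:refnub}(i) applied to $\mathcal{F}(\Sigma^\delta)$ yields that $\Lambda^1$ is ergodic for $P^\delta$. To invoke Theorem \ref{thm:ergcol}, one must exhibit an ergodic measure for $K^\delta$ that is nonsingular with respect to $\Lambda^2$; by Remark \ref{rem:nonsing}, it suffices to verify that $P^\delta(\theta,\cdot)$ is nonsingular with respect to $\Lambda^1$ for every $\theta \in (0,\pi)$. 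For each fixed $\theta$ one decomposes incoming point-particle trajectories according to the first boundary component they encounter: those striking a nub meet a smooth, strictly concave-down arc, and the map from the impact parameter to the outgoing angle is a smooth local diffeomorphism on a positive-measure set of starting offsets, producing an absolutely continuous contribution to $P^\delta(\theta,\cdot)$. Theorem \ref{thm:ergcol} then yields ergodicity of $\Lambda^2$ for $K^\delta$, and the second half of Remark \ref{rem:nonsing} upgrades this to unique ergodicity.

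For part (ii), the common product structure in \eqref{eq:roughcolform0} for $K_+$ and $K_+^\delta$ yields, for any Borel $B \subset \mathbb{S}^2_+$,
\[
(K_+ - K_+^\delta)(G(\theta,\psi), B) = (P - P^\delta)\bigl(\theta,\ \{\theta' \in (0,\pi) : G(\theta', \pi-\psi) \in B\}\bigr),
\]
so the claimed bound follows directly from Theorem \ref{thm:refnub}(ii) upon setting $\widetilde{R} := G(\widetilde{A} \times (0,\pi))$, where $\widetilde{A} \subset (0,\pi)$ is the set of Theorem \ref{thm:refnub}(ii). The bound on $\sigma^2(\mathbb{S}^2_+ \smallsetminus \widetilde{R})$ is obtained from the form of the spherical-surface element on $\mathbb{S}^2_+$ in the $(\theta,\psi)$ coordinates (a constant times $\sin\psi\, d\theta\, d\psi$), giving $\sigma^2(\mathbb{S}^2_+ \smallsetminus \widetilde{R}) \leq C\,|(0,\pi) \smallsetminus \widetilde{A}|$, which is at most a multiple of $\delta^{2/9}$ by Theorem \ref{thm:refnub}(ii); absorbing this constant into $c$ completes the estimate.

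The principal obstacle is verifying the pointwise (rather than merely almost-everywhere) nonsingularity of $P^\delta(\theta,\cdot)$ needed for unique ergodicity. This amounts to a geometric claim about reflections off a smooth, strictly concave-down arc: for every incidence angle $\theta \in (0,\pi)$, one must isolate a positive-measure set of impact locations on the nub whose outgoing-angle images form an absolutely continuous distribution. The remainder of the argument, including the translation between spherical coordinates and velocity coordinates and the propagation of the quantitative bound, is routine once this nonsingularity is in hand.
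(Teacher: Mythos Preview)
Your proposal is correct and, for part (i), matches the paper's argument: Theorem \ref{thm:refnub}(i) gives ergodicity of $\Lambda^1$ for the reflection factor, pointwise nonsingularity of $P^\delta(\theta,\cdot)$ is extracted from the local-diffeomorphism argument in the proof of Lemma \ref{lem:fullprojection} (exactly the geometric claim you identify as the ``principal obstacle''), and Theorem \ref{thm:ergcol} together with Remark \ref{rem:nonsing} then yields unique ergodicity of $\Lambda^2$.

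For part (ii), your route is genuinely more direct than the paper's. You observe that in the $(\theta,\psi)$ coordinates the surface element is $\sigma^2(d\theta\,d\psi)=\sin\psi\,d\theta\,d\psi$, so $\sigma^2(\mathbb{S}^2_+\smallsetminus\widetilde{R})=2\,|(0,\pi)\smallsetminus\widetilde{A}|\le 2c\,\delta^{2/9}$ immediately from the Lebesgue bound of Theorem \ref{thm:refnub}(ii). The paper instead detours through $\Lambda^2$: it invokes the sharper bound $\Lambda^1((0,\pi)\smallsetminus\widetilde{A})\le c\,\delta^{4/9}$ recorded in Remark \ref{rem:Lambbd}, passes to $\Lambda^2(\mathbb{S}^2_+\smallsetminus\widetilde{R})\le c\,\delta^{4/9}$, and then compares $\Lambda^2$ with $\sigma^2$ via the density $\tfrac{1}{\pi}\sin\theta\sin\psi$, losing a factor $\delta^{2/9}$ in the process. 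Both arrive at the same exponent $\delta^{2/9}$ for $\sigma^2$, but your computation avoids the need for Remark \ref{rem:Lambbd} and the density comparison; the paper's approach, on the other hand, records along the way the stronger intermediate estimate $\Lambda^2(\mathbb{S}^2_+\smallsetminus\widetilde{R})\le c\,\delta^{4/9}$.
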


We prove Theorem \ref{thm:refnub} and Corollary \ref{cor:refnub} in \S\ref{ssec:refnub}.

\subsection{Proof of Lemma \ref{lem:iso}} \label{ssec:isoproof}

Recall the definitions of the maps $R$, $\Phi$, and $H$, from \S\ref{ssec:ergodic}.

\begin{proof}[Proof of Lemma \ref{lem:iso}]
\textit{Step 1. We show that $R_*K = K$.} To see this, let $f$ be any test function on $\hat{\mathbb{S}}^2$, and consider the following calculation 
\[
\begin{split}
& \int_{\hat{\mathbb{S}}^2} f(v')R_*K(v, dv') = \int_{\hat{\mathbb{S}}^2} f(R(v'))K(R^{-1}(v), dv') \\
& = \int_{\mathbb{S}^2_+} f(R(v'))K(R^{-1}(v), dv') + \int_{\mathbb{S}^2_-} f(R(v'))K(R^{-1}(v), dv') \\
& = \int_{\mathbb{S}^2_+} f(R(v'))K_+(-R^{-1}(v), dv') + \int_{\mathbb{S}^2_-} f(v')K_+(-v, dv') \\
& = \int_{\hat{\mathbb{S}}^2_+} f(v')K(v, dv'),
\end{split} 
\]
where the last two lines use \eqref{eq:twosides} and the fact that $R$ is a linear involution.

Next, define $\widetilde{R} : E^* \to E^*$ by $\widetilde{R} = \Phi^{-1} \circ R \circ \Phi$. Note that since $H = \Phi^{-1}$, it is immediate that 
\begin{equation} \label{eq:commute}
H \circ R = \widetilde{R} \circ H. 
\end{equation}

\textit{Step 2. We will show that $\widetilde{R}_*Q = Q$.} To see this, first observe that for any $(u,v,s) \in E$, 
$$
\widetilde{R}(u,v,s) = (v,u,-s).
$$ 
This follows from the definition of $\Phi$ and involutivity of $R$:
$$
R \circ \Phi(u,v,s) = u n_1 + v R(n_1) - s(1 - u^2 - v^2)^{1/2} n_2 = \Phi(v, u, -s),
$$
noting also that $R$ maps $n_2$ to $-n_2$. Consequently, for any test function $f$, 
\[
\begin{split}
& \int_{E^*} f(u',v',s')\widetilde{R}_*Q(u,v,s; du' dv' ds') \\
& = \int_{E^*} f(\widetilde{R}(u',v',s')) Q(\widetilde{R}^{-1}(u,v,s); du' dv' s') \\
& = \int_{E^*} f(v',u',-s') Q(v,u,-s; du' dv' s') \\
& = \begin{cases}
\int_{(-1,1)} f(u,\ell_u(x'),-1)\hat{q}(\ell_u^{-1}(x),dx') & \text{ if } s = 1, \\
\int_{(-1,1)} f(\ell_v(x'),v,1)\hat{q}(\ell_v^{-1}(x),dx') & \text{ if } s = -1,
\end{cases} \\
& = \int_{E^*} f(u',v',s')Q(u,v,s; du' dv' ds'),
\end{split}
\]
where the last two equalities follow by \eqref{eq:Qdef}, which completes Step 2.

\textit{Completion of proof of Lemma.} We claim that to prove the lemma, it is sufficient to show that, for all $(u,v,s) \in E^-$,
\begin{equation} \label{eq:pushtoprove}
H_*K(u,v,s; du' dv' ds') = Q(u,v,s; du' dv' ds').
\end{equation}
Indeed, once we have \eqref{eq:pushtoprove}, by Steps 1 and 2, for any $(u,v,s) \in E^+$ and measurable set $A \subset E^*$, 
\[
\begin{split}
H_*K(u,v,s; A) & = H_* R_* K(u,v,s; A) \\
& = K((H \circ R)^{-1}(u,v,s); (H \circ R)^{-1}(A)) \\
& = K((\widetilde{R} \circ H)^{-1}(u,v,s); (\widetilde{R} \circ H)^{-1}(A)) \\
& = H_*K(\widetilde{R}^{-1}(u,v,s); \widetilde{R}^{-1}(A)) \\
& = Q(\widetilde{R}^{-1}(u,v,s); \widetilde{R}^{-1}(A)) \\
& = \widetilde{R}_*Q(u,v,s; A) = Q(u,v,s; A),
\end{split}
\]
where we have used \eqref{eq:commute} in the third line.

To prove \eqref{eq:pushtoprove}, we first compute the spherical coordinate representations of $K$ and $H$. Let $\widehat{H}(\theta,\psi) = H \circ \widetilde{G}(\theta,\psi)$, and let $\widehat{K}(\theta,\psi; d\theta' d\psi') = \widetilde{G}_*K(\theta,\psi; d\theta' d\psi')$. 


To obtain a formula for $\widehat{H}$, suppose that $(u,v,s) = \widehat{H}(\theta,\psi) \in E^*$, where $(\theta,\psi) \in (0,2\pi) \times (0,\pi)$. Then 
\begin{equation} \label{eq:tosolve}
\Phi(u,v,s) = \widetilde{G}(\theta,\psi).
\end{equation}
By definition of $\Phi$ and orthonormality of the basis $(\chi,n_1,n_2)$,
\[
\begin{split}
\Phi(u,v,s) & = \langle \chi, \Phi(u,v,s) \rangle \chi + \langle n_1, \Phi(u,v,s) \rangle n_1 + \langle n_2, \Phi(u,v,s) \rangle n_2 \\
& = u \langle R(n_1), \chi \rangle \chi + (u \langle R(n_1), n_1 \rangle + v)n_1 + s(1 - u^2 - v^2)^{1/2} n_2 \\
& = (u\sin\gamma) \chi + (u \cos\gamma + v)n_1 + s(1 - u^2 - v^2)^{1/2} n_2.
\end{split}
\]
But in view of the definition of $\widetilde{G}$, \eqref{eq:tosolve} holds if and only if $$
\cos\psi = u\sin\gamma, \quad \cos\theta\sin\psi = u \cos\gamma + v, \quad \text{ and } s = \sgn(\sin\theta);
$$
or equivalently, 
$$
u = \csc\gamma\cos\psi, \quad v = \cos\theta\sin\psi - \cos\psi\cot\gamma = \ell_u(\cos\theta), \quad s = \sgn(\sin\theta),
$$
where $\ell_u$ is given by the formula \eqref{eq:Sform}. We have proved:
\begin{equation} \label{eq:Hrep}
\widehat{H}(\theta,\psi) = \left( \csc\gamma\cos\psi, \ell_{\csc\gamma\cos\psi}(\cos\theta), \sgn(\sin\theta)\right).
\end{equation}

Note that $H$ maps $E^-$ onto $\mathbb{S}^2_-$, and $\mathbb{S}^2_-$ coincides with $\{(\theta,\psi) : \pi < \theta < 2\pi, 0 < \psi < \pi\}$ in the spherical coordinates. Thus, it is possible to invert $\widehat{H}$ in \eqref{eq:Hrep} to obtain the formula,
\begin{equation} \label{eq:Hinvrep}
\widehat{H}^{-1}(u,v,-1) = \left(2\pi - \arccos(\ell_u^{-1}(v)), \arccos(u\sin\gamma) \right), \quad \text{ for all } (u,v) \in E.
\end{equation}

We may also obtain a formula for $\widehat{K}$ restricted to $(\pi,2\pi) \times (0,\pi)$. Observe that the negation map $v \mapsto -v$ from $\mathbb{S}^2_-$ to $\mathbb{S}^2_+$ is given in coordinates by $(\theta,\psi) \mapsto (\theta - \pi, \pi - \psi)$. Using this, as well as \eqref{eq:twosides}
and \eqref{eq:roughcolform0}, we have
\begin{equation} \label{eq:Kpmrep}
\begin{split}
\widehat{K}(\theta,\psi; d\theta' d\psi') & = K_+(\theta - \pi, \pi - \psi; d\theta' d\psi') \\
& = P(\theta - \pi, d\theta')\delta_{\psi}(d\psi'), \quad \text{ for all } (\theta,\psi) \in (\pi,2\pi) \times (0,\pi).
\end{split}
\end{equation}

To finish the argument, we make the following calculation. Let $(u,v,-1) \in E^-$, and let $f$ be any test function on $E^*$. We compute 
\[
\begin{split}
& \int_{E^*} f(u',v',s')\widehat{H}_*\widehat{K}(u,v,-1; du' dv' ds') \\
& = \int_{(0, 2\pi) \times (0,\pi)} f(\widehat{H}(\theta',\psi'))\widehat{K}(\widehat{H}^{-1}(u,v,-1); d\theta' d\psi') \\
& = \int_{(0, 2\pi) \times (0,\pi)} f\left( \csc\gamma\cos\psi', \ell_{\csc\gamma\cos\psi'} (\cos\theta'), 1 \right) \\
& \hspace{2in} \cdot \widehat{K}(2\pi - \arccos(\ell_u^{-1}(v)), \arccos(u\sin\gamma); d\theta' d\psi'),
\end{split}
\]
where we use \eqref{eq:Hrep} and \eqref{eq:Hinvrep} in the last line. Applying \eqref{eq:Kpmrep}, this is equal to 
\[
\begin{split}
&\int_{(0, \pi)} f\left(u, \ell_{u}(\cos\theta'), 1 \right) P(\pi - \arccos(\ell_u^{-1}(v)),d\theta') \\
& = \int_{(-1,1)} f(u,\ell_u(x), 1)h_*P(-\ell_u^{-1}(v), dx') \\
& = \int_{(-1,1)} f(u, \ell_u(x), 1)\hat{q}(\ell_u^{-1}(v),dx') \\
& = \int_{(-\csc\gamma,\csc\gamma)} \int_{E_{u'}} f(u',v',1)(\ell_u)_*\hat{q}(v, dv') \delta_u(du') \\
& = \int_{E^*} f(u',v',s')Q(u,v,-1; du' dv' ds'),
\end{split}
\]
and this completes the proof.
\end{proof}

\subsection{Proofs of Theorem \ref{thm:refnub} and Corollary \ref{cor:refnub}} \label{ssec:refnub}

\subsubsection{Proof of Theorem \ref{thm:refnub}(i)}

Define a measure on $(0,\pi) \times (0,\pi)$ by $\mu^\delta(d\theta d\theta') = P^\delta(\theta, d\theta') \Lambda^1(d\theta)$. Let 
$$
M^\delta := \{(\theta, \theta') \in (0,\pi) \times (0,\pi): \mu^\delta \gg |\cdot|_U \text{ for some open } U \ni (\theta, \theta')\},
$$
where $|\cdot|_U$ denotes Lebesgue measure on $U$. For $i = 1,2$, we let $\pi_i : (0,\pi) \times (0,\pi) \to (0,\pi)$ denote projection onto the first and second coordinate, respectively. To show that $\Lambda^1$ is ergodic, the following fact is key.

\begin{lemma} \label{lem:fullprojection}
$\pi_1 M^\delta = (0,\pi)$.
\end{lemma}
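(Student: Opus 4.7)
The plan is to fix an arbitrary $\theta_0 \in (0,\pi)$ and exhibit a point $\theta_0' \in (0,\pi)$ together with an open neighborhood $U$ of $(\theta_0,\theta_0')$ in $(0,\pi)^2$ on which $\mu^\delta$ has a strictly positive continuous Radon--Nikodym derivative with respect to Lebesgue measure. This forces $|\cdot|_U \ll \mu^\delta$, so $(\theta_0,\theta_0') \in M^\delta$ and hence $\theta_0 \in \pi_1 M^\delta$; the lemma then follows by arbitrariness of $\theta_0$.

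First I would isolate a family of simple one-bounce trajectories. By Lemma~\ref{lem:construct}(iii) applied with $\widetilde{\epsilon}=1$, $P^\delta(\theta,\cdot)$ is the distribution of the exit angle of the billiard trajectory in $W^\delta$ starting from $(x,0)$ with velocity $(-\cos\theta,-\sin\theta)$, as $x$ is uniform on $[0,1]$. Taking without loss of generality the apex of the nub at $t=0$ with $h(0)=0$, the nub $\Gamma_h$ is strictly above every other point of $\partial W^\delta \subset \mathbb{R} \times (-\infty,0]$. A short geometric argument then produces a nonempty open subinterval $I = I(\theta_0) \subset [0,1]$ such that, for every $x \in I$, the incoming ray first meets $\partial W^\delta$ at an interior point $(t(x),h(t(x)))$ of $\Gamma_h$ with $t(x)$ in a compact subinterval of $(-\delta,\delta)$, reflects specularly once, and then exits $\{x_2 > 0\}$ with no further wall contact.

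Next I would compute the reflection map explicitly on $I$. A direct computation for specular reflection from a graph with tangent angle $\beta(t) := \arctan h'(t)$ gives
\begin{equation*}
\theta'(x,\theta_0) \;=\; \pi - \theta_0 + 2\beta(t(x)).
\end{equation*}
Strict concavity of $h$ gives $\beta'(t) = h''(t)/(1+h'(t)^2) < 0$ throughout $(-\delta,\delta)$, while the implicit equation $h(t) = -\tan\theta_0\,(t-x)$ shows $t(x)$ is smooth with $t'(x) \neq 0$. Hence $x \mapsto \theta'(x,\theta_0)$ is a smooth diffeomorphism of $I$ onto an open interval $J \subset (0,\pi)$; pick any $\theta_0' \in J$ and $x_0 \in I$ with $\theta'(x_0,\theta_0) = \theta_0'$. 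Now observe that the construction in the previous paragraph depends continuously on $\theta$, so $\Psi(x,\theta) := (\theta,\theta'(x,\theta))$ is smooth on an open neighborhood $V \ni (x_0,\theta_0)$, with Jacobian determinant $\partial_x\theta'(x_0,\theta_0) \neq 0$ at that point. The inverse function theorem yields an open $V' \subset V$ mapped diffeomorphically by $\Psi$ onto an open neighborhood $U$ of $(\theta_0,\theta_0')$. Restricting the $x$-averaging in Lemma~\ref{lem:construct}(iii) to $I(\theta)$ and to $V'$ gives, for every Borel $A \subset U$,
\begin{equation*}
\mu^\delta(A) \;\geq\; \int_{V'} \mathbf{1}_A\bigl(\Psi(x,\theta)\bigr)\, dx\,\Lambda^1(d\theta) \;=\; \Psi_*\bigl(|dx|\otimes\Lambda^1(d\theta)|_{V'}\bigr)(A).
\end{equation*}
Because $\Lambda^1 = \tfrac{1}{2}\sin\theta\,d\theta$ is positive and smooth on $(0,\pi)$ and $\Psi$ is a diffeomorphism, the right-hand measure has strictly positive continuous density on $U$, forcing $|\cdot|_U \ll \mu^\delta$.

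The main obstacle will be the geometric isolation in Step~1: rigorously verifying that a nondegenerate open family of one-bounce trajectories exists for \emph{every} $\theta_0 \in (0,\pi)$, including grazing angles. At $\theta_0$ close to $0$ or $\pi$ the shadow of the nub on $\{x_2 = 0\}$ becomes long, but only the portion of it that hits the nub sufficiently close to its apex yields an outgoing ray whose vertical component is bounded away from zero, which is essential both for ruling out secondary collisions with neighboring periods and for ensuring smooth dependence of $\theta'$ on $x$. Making this quantitative in terms of $\delta$, $\theta_0$, and the base cell $\Sigma$ requires care; the key fact to exploit is the separation between the apex (at height $0$) and the remaining part of $\partial W^\delta$ (at height at most $-c\delta$ inside the nub's neighborhood), together with the fact that specular reflection is continuous in all its inputs away from tangential incidence.
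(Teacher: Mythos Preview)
Your approach is essentially the same as the paper's: both exhibit, for each $\theta_0$, a one-bounce reflection off the nub near its apex, compute the exit angle via $\theta' = 2\alpha - \theta$ (equivalently your $\pi - \theta_0 + 2\beta$, since the normal angle $\alpha$ and tangent angle $\beta$ differ by $\pi/2$), and invoke the inverse function theorem to get a local diffeomorphism $(x,\theta) \mapsto (\theta,\theta')$.

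The concern you flag in your final paragraph about grazing angles is not actually an obstacle, and the paper's parametrization dissolves it cleanly. Rather than parametrizing by the starting point $x$ and solving implicitly for the contact point, the paper parametrizes directly by the contact point $u$ on the nub via $\Psi_0(u,\theta) = (u - h(u)\cot\theta,\theta)$ and $\Psi_1(u,\theta) = (\theta, 2\alpha(u)-\theta)$. At the apex $u^*$ one has $h(u^*)=h'(u^*)=0$, so $\Psi_0(u^*,\theta)=(u^*,\theta)$ with $D\Psi_0(u^*,\theta)=\mathrm{Id}$, and the reflected ray has angle $\pi-\theta \in (0,\pi)$, hence exits upward. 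Since the apex is the \emph{unique} point of $\partial W^\delta$ at height $0$ and every other boundary point lies strictly below, this reflected ray cannot touch the wall again, for \emph{any} $\theta\in(0,\pi)$. No separate analysis near $\theta=0$ or $\theta=\pi$ is needed.
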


\begin{proof}
Define $F^\delta : \mathbb{R} \times (0,\pi) \to (0,\pi) \times (0,\pi)$ by 
$$
F^\delta(x,\theta) = (\theta, p_2 \circ P^{\Sigma^\delta,1}(x,\theta)),
$$
where $p_2 : \mathbb{R} \times (0,\pi) \to (0,\pi)$ denotes projection onto the second (angular) coordinate. We note that $F^\delta(x,\theta)$ is $1$-periodic in $x$ since $P^{\Sigma^\delta,1}$ is. Moreover, if $X \sim \text{Unif}[0,1]$ and $\Theta \sim \Lambda^1$ are independent, then $F^\delta(X, \Theta) \sim \mu^\delta$. Consequently, to prove the claim, it suffices to show that for every $\theta \in (0,\pi)$, there exists $(u^*, \theta') \in \mathbb{R} \times (0,\pi)$ and open sets $U_0 \ni (u^*,\theta)$ and $U \ni (\theta,\theta')$ such that $F^\delta$ restricts to a diffeomorphism from $U_0$ onto $U$. (Note that by 1-periodicity, it does not matter if $u^*$ lies in $[0,1]$ or not.)

To this end, for $u \in (-\delta, \delta)$, let $\alpha(u) \in (0,\pi)$ denote the counterclockwise angle which the unit normal vector $n(u)$ makes with the positive $x_1$-axis. Since $h$ is smooth and strictly concave, $\alpha$ is smooth and $\alpha'(u) < 0$ for all $u \in (-\delta, \delta)$. Consider functions $\Psi_0 : (-\delta, \delta) \times (0,\pi) \to \mathbb{R} \times (0,\pi)$ and $\Psi_1 : (-\delta, \delta) \times (0,\pi) \to (0,\pi) \times \mathbb{R}$, defined as follows:
$$
\Psi_0(u,\theta) = (u - h(u)\cot\theta, \theta), \quad\quad \Psi_1(u,\theta) = (\theta, 2\alpha(u) - \theta).
$$
Suppose a billiard particle starts from the state $(x,\theta)$, reflects specularly from the nub at the point $(u,h(u))$, and returns to the horizontal axis without reflecting additional times from the boundary. If $(\theta,\theta') = F^\delta(x,\theta)$, it is easy to check that $\Psi_0(u,\theta) = (x,\theta)$ and $\Psi_1(u,\theta) = (\theta,\theta')$. See Figure \ref{fig:nubreflection}.

\begin{figure}
    \centering
    \includegraphics[width = 0.7\linewidth]{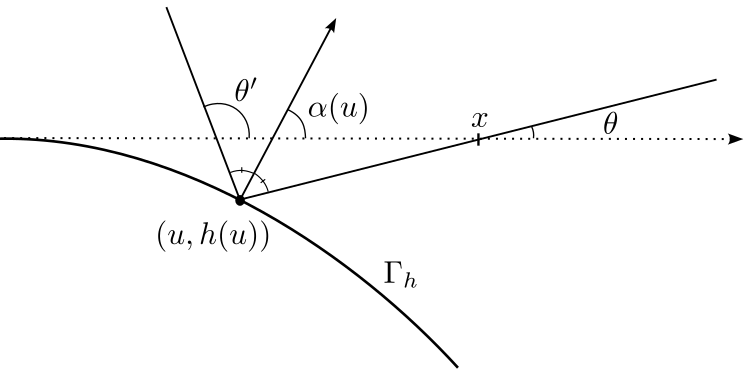}
    \caption{Reflection from a nub.}
    \label{fig:nubreflection}
\end{figure}

Let $u^* \in (-\delta,\delta)$ be the point at which $h$ attains its maximum, and note that $h(u^*) = h'(u^*) = 0$ and $\alpha(u^*) = \pi/2$. Also note if a billiard trajectory reflects from the nub at the point $(u^*,0)$, it cannot reflect additional times from the boundary. It is enough to show that for every $\theta \in (0,\pi)$, the total derivatives $D\Psi_0$ and $D\Psi_1$ are invertible at the point $(u^*,\theta)$. For then $\Psi_0$ and $\Psi_1$ are invertible in a neighborhood of the point $(u^*,\theta)$ by the inverse function theorem, and $F^\delta$ restricts to a diffeomorphism of form $\Psi_1 \circ \Psi_0^{-1} : U_0 \to U$, where $U_0$ is some neighborhood of $\Psi_0(u^*,\theta) = (u^*,\theta)$ and $U$ some neighborhood of $\Psi_1(u^*,\theta) = (\theta,\pi - \theta)$. For $(u,\theta) \in \mathbb{R} \times (0,\pi)$, we compute
$$
D\Psi_0(u,\theta) = \begin{bmatrix}
1 - h'(u)\cot\theta & h(u) \csc^2\theta \\
0 & 1
\end{bmatrix}, \quad D\Psi_1(u,\theta) = \begin{bmatrix}
0 & 1 \\
2\alpha'(u) & -1
\end{bmatrix}.
$$
We see that $D\Psi_0(u^*,\theta)$ and $D\Psi_1(u^*,\theta)$ are invertible, since $D\Psi_0(u^*,\theta)$ is just the identity, while $\det D\Psi_1(u,\theta) = -2\alpha'(u) > 0$ for any $u \in (-\delta,\delta)$. This proves the lemma.
\end{proof}

We can now prove the first part of the theorem.

\begin{proof}[Proof of Theorem \ref{thm:refnub}(i)] Let $A \subset (0,\pi)$ be an invariant set for $P^\delta$. We must show that $\Lambda^1(A) \in \{0,1\}$. Let $W$ be the set of all $\theta \in (0,\pi)$ such that for some open $I \subset (0,\pi)$, $\theta \in I$ and $\Lambda^1(I \smallsetminus A) = 0$. Let $W'$ be the set of all $\theta \in (0,\pi)$ such that for some open $I \subset (0,\pi)$, $\theta \in I$ and $\Lambda^1(I \cap A) = 0$. Clearly $W$ and $W'$ are disjoint open subsets of $(0,\pi)$. We will show that if the claim holds, then 
\begin{equation} \label{eq:unionisall}
W \cup W' = (0,\pi).
\end{equation}
Noting connectedness of $(0,\pi)$, it is immediate from \eqref{eq:unionisall} that either $W = (0,\pi)$ or $W' = (0,\pi)$. But this implies ergodicity, since if $W = (0,\pi)$, then choosing some countable cover for $(0,\pi)$ by open intervals $I$ such that $\Lambda^1(A \smallsetminus I) = 0$, we deduce that $\Lambda^1(A) = 1$; and similarly if $W' = (0,\pi)$, we deduce that $\Lambda^1(A) = 0$. 

To see why the claim implies \eqref{eq:unionisall}, first note by invariance of the measure $\Lambda^1$,
\begin{equation} \label{eq:fullmeas}
\mu^\delta((A \times A) \cup (A^c \times A^c)) = \Lambda^1(A) + \Lambda^1(A^c) = 1.
\end{equation}
In addition, by the claim, if $\theta \in (0,\pi)$, there exists an open set $U \subset (0,\pi) \times (0,\pi)$ such that $\theta \in \pi_1 U$ and $\mu^\delta \gg |\cdot|_U$. Without loss of generality, we may suppose that $U = I \times J$ for some open $I$ and $J$. In view of \eqref{eq:fullmeas}, $\mu^\delta(A \times A^c) = \mu^\delta(A^c \times A) = 0$. Consequently, 
$$
0 = |A \times A^c|_U = |A|_I |A^c|_J,
$$
and 
$$
0 = |A^c \times A|_U = |A^c|_I |A|_J.
$$
Since $|A|_J$ and $|A^c|_J$ cannot  both be zero, either $|A|_I = 0$ or $|A^c|_I = 0$. Equivalently, either $\Lambda^1(I \cap A) = 0$ or $\Lambda^1(I \smallsetminus A) = 0$. In the first case, $\theta \in W'$ and in the second case, $\theta \in W$, as desired. 
\end{proof}

\subsubsection{Proof of Theorem \ref{thm:refnub}(ii)}

\begin{proof}[Proof of Theorem \ref{thm:refnub}(ii)] Consider a billiard particle which enters the lower half-plane $x_2 \leq 0$ and reflects from the wall $W^\delta$ some number of times before leaving the lower half-plane and ceasing to reflect from the wall. We make some elementary observations:
\begin{enumerate}
\item All points of reflection must lie on the portion of the boundary of $W^\delta$ bounded between points of maximal height on adjacent nubs.

\item Since the normal vector on a nub has a positive $x_2$-component, the billiard particle can hit a nub at most twice, possibly once when the particle first hits the wall and possibly again when it last hits the wall.
\end{enumerate}

Let $V_0 \subset [0,1] \times (0,\pi)$ be the set of all initial configurations $(x,\theta)$ such that the billiard particle starting from $(x,\theta)$ does not reflect from the nub initially. Let $V_1 \subset [0,1] \times (0,\pi)$ be the set of all initial configurations $(x,\theta)$ such that the billiard particle does not reflect from a nub on its final reflection from the wall. Let $V = V_0 \cap V_1$.

Let $H : V_0 \to [0,1] \times (0,\pi)$ be defined by 
$$
H(x,\theta) = \left( \frac{x - |h(\delta)|\cot\theta}{1 - 2\delta}, \theta \right).
$$
Geometrically, $H$ has the following interpretation. Let $P_1 = (\delta,h(\delta))$ and $P_2 = (1 - \delta, h(\delta))$ be, respectively, the right and left endpoints of the nubs bounding the period of $\partial W^\delta$ below the interval $[0,1]$. The trajectory of a billiard particle starting from $(x,\theta) \in V_0$ will intersect the line segment $P_1 P_2$ at a point $R = u P_1 + (1 - u) P_2$, where $(u,\theta) = H(x,\theta)$. We define
$$
V_0' = H(V_0), V_1' = H(V_1), \text{ and } V' = H(V).
$$

Let $A := p_2V = p_2V'$, where $p_2 : [0,1] \times (0,\pi) \to (0,\pi)$ is projection onto the second (angular) coordinate, and let $X \sim \text{Unif}[0,1]$.

\textbf{Observation:} \textit{For any $\theta \in A$, $p_2 \circ P^{\Sigma^\delta,1}(X,\theta)$, conditional on $(X,\theta) \in V$, is equal in distribution to $p_2 \circ P^{\Sigma,1}(X,\theta)$, conditional on $(X,\theta) \in V'$.}

To see this, first note that the portion of the boundary of $W^\delta$ which runs between the points $P_1$ and $P_2$ is geometrically similar to the portion of the boundary of $W$ between the points $(0,0)$ and $(1,0)$. Moreover, conditional on $(X,\theta) \in V$, the trajectory starting from $(X,\theta)$ never reflects from a nub. Consequently, the angle of exit for the trajectory starting from $(X,\theta)$ and reflecting from $W^\delta$ is equal to the angle of exit for the trajectory starting from $H(X,\theta)$ and reflecting from $W$, i.e. 
\begin{equation} \label{eq:similarity}
p_2 \circ P^{\Sigma^\delta,1}(X,\theta) = p_2 \circ P^{\Sigma, 1}(H(X,\theta)).
\end{equation}
Since for each $\theta \in A$, $H(\cdot,\theta)$ is an affine isomorphism mapping $\{x \in [0,1] : (x,\theta) \in V\}$ onto $\{u \in [0,1] : (u,\theta) \in V'\}$, we see that $(X,\theta)$, conditional on $(X,\theta) \in V'$, is equal in distribution to $H(X,\theta)$, conditional on $(X,\theta) \in V$. The observation follows from this fact and \eqref{eq:similarity}.

Let $\widetilde{A} \subset A$. For any $\theta \in \widetilde{A}$, and $B \subset (0,\pi)$,
\begin{align*}
P(\theta,B) - P^\delta(\theta,B) & = \mathbb{P}(P^{\Sigma,1}(X,\theta) \in B) - \mathbb{P}(P^{\Sigma^\delta,1}(X,\theta) \in B) \\
& = \mathbb{P}(P^{\Sigma,1}(X,\theta) \in B \ | \ (X,\theta) \in V')\mathbb{P}((X,\theta) \in V') \\
& \quad + \mathbb{P}(P^{\Sigma,1}(X,\theta) \in B \ | \ (X,\theta) \notin V')\mathbb{P}((X,\theta) \notin V') \\
& \quad - \mathbb{P}(P^{\Sigma^\delta,1}(X,\theta) \in B \ | \ (X,\theta) \in V)\mathbb{P}((X,\theta) \in V) \\
& \quad - \mathbb{P}(P^{\Sigma^\delta,1}(X,\theta) \in B \ | \ (X,\theta) \notin V)\mathbb{P}((X,\theta) \notin V).
\end{align*}
By the observation, the conditional probabilities appearing in the second and fourth line above are equal. Using this and the fact that probabilities are bounded by 1, we obtain 
\begin{equation} \label{eq:probstobd}
\begin{split}
& |P(\theta,B) - P^\delta(\theta,B)| \\
& \leq c_1\max\left\{\left| \mathbb{P}((X,\theta) \in V') - \mathbb{P}((X,\theta) \in V) \right|, \mathbb{P}((X,\theta) \notin V'), \mathbb{P}((X,\theta) \notin V)\right\} \\
& \leq c_1\max\{\mathbb{P}((X,\theta) \notin V'), \mathbb{P}((X,\theta) \notin V)\}.
\end{split}
\end{equation}
We will be done if we can find a constant $c > 0$ and some $\widetilde{A} \subset A$ such that, for $\delta$ sufficiently small, $|(0,\pi) \smallsetminus \widetilde{A}| \leq c\delta^{2/9}$ and, for all $\theta \in \widetilde{A}$, the right-hand side of \eqref{eq:probstobd} is bounded above by $c\delta^{2/9}$.

\begin{figure}
    \centering
    \includegraphics[width = 0.8\linewidth]{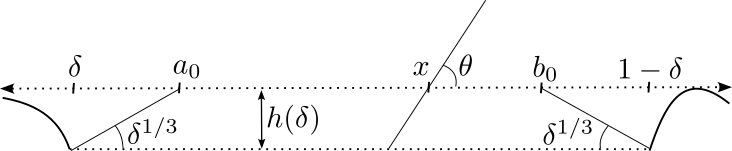}
    \caption{If $\widetilde{A}_0$ and $I_0$ are defined as in \eqref{eq:intervaldefs}, then $I_0 = (a_0,b_0)$ and the billiard trajectory starting from $(x,\theta) \in I_0 \times \widetilde{A}_0$ will not hit the nubs initially.}
    \label{fig:controlin}
\end{figure}

To begin the construction of $\widetilde{A}$, we define intervals
\begin{equation} \label{eq:intervaldefs}
\widetilde{A}_0 = (\delta^{1/3}, \pi - \delta^{1/3}) \quad \text{ and } \quad I_0 = (\delta + |h(\delta)|\cot\delta^{1/3}, 1 - \delta - |h(\delta)|\cot\delta^{1/3}).
\end{equation}
It is a matter of trigonometry to check that $I_0 \times \widetilde{A}_0 \subset V_0$ (see Figure \ref{fig:controlin}). Hence,
\begin{equation} \label{eq:R0inV}
R_0 := I_0 \times \widetilde{A}_0 \cap (P^{\Sigma^\delta,1})^{-1}(I_0 \times \widetilde{A}_0) \subset V.
\end{equation}
Since $P^{\Sigma^\delta,1}$ preserves $\widehat{\Lambda}^1$,  
\begin{align*}
& \widehat{\Lambda}^1((P^{\Sigma^\delta,1})^{-1}(I_0 \times \widetilde{A}_0)) = \widehat{\Lambda}^1(I_0 \times \widetilde{A}_0) \\
& \quad = (1 - 2\delta - 2|h(\delta)|\cot \delta^{1/3}) \cos \delta^{1/3} \\
& \quad \geq 1 - c_2 \delta^{2/3},
\end{align*}
where the last line uses the fact that $|h(\delta)| \leq c_3 \delta$, $\cos \delta^{1/3} \leq 1 - c_4 \delta^{2/3}$, and $\sin \delta^{1/3} \geq c_5 \delta^{1/3}$, for $\delta$ sufficiently small. Thus, 
\begin{equation} \label{eq:r0lb}
\widehat{\Lambda}^1(R_0) \geq 1 - c_6 \delta^{2/3}.
\end{equation}
Let 
$$
T_\delta = \{\theta \in (0,\pi) : \mathbb{P}((X,\theta) \notin R_0) > \delta^{2/9}\}.
$$
We have 
\begin{equation} \label{eq:r0clb}
\widehat{\Lambda}^1([0,1] \times (0,\pi) \smallsetminus R_0) = \int_{T_\delta} \mathbb{P}((X,\theta) \notin R_0) \Lambda^1(d\theta) \geq \delta^{2/9}\Lambda^1(T_\delta).
\end{equation}
Moreover, 
\begin{equation} \label{eq:tdeltalb}
\Lambda^1(T_\delta) \geq \int_{\delta^{2/9}}^{\pi - \delta^{2/9}} \mathbf{1}_{T_\delta}(\theta)\frac{1}{2}\sin\theta d\theta \geq \frac{1}{2}\delta^{2/9}|T_\delta \cap (\delta^{2/9}, \pi - \delta^{2/9})| \geq \frac{1}{2}\delta^{2/9}|T_\delta| - \delta^{4/9}. 
\end{equation}
Set 
$$
\widetilde{A}_1 = (0,\pi) \smallsetminus T_\delta = \{\theta \in (0,\pi) : \mathbb{P}((X,\theta) \notin R_0) \leq \delta^{2/9}\}.
$$
Combining \eqref{eq:r0lb}, \eqref{eq:r0clb}, and \eqref{eq:tdeltalb}, we obtain 
\begin{equation} \label{eq:A1compbd}
|(0,\pi) \smallsetminus \widetilde{A}_1| = |T_\delta| \leq c_7 \delta^{2/9},
\end{equation}
and by \eqref{eq:R0inV}, for any $\theta \in \widetilde{A}_1$, 
\begin{equation} \label{eq:notinv_bd}
\mathbb{P}((X,\theta) \notin V) \leq \mathbb{P}((X,\theta) \notin R_0) \leq \delta^{2/9}.
\end{equation}

\begin{figure}
    \centering
    \includegraphics[width = 0.8\linewidth]{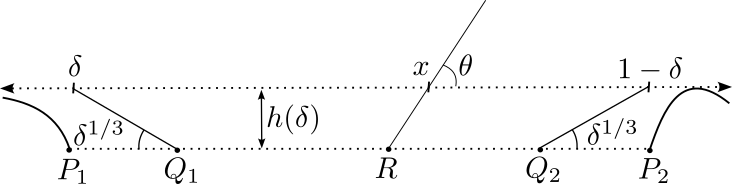}
    \caption{In the figure above, $P_1 = (\delta,h(\delta))$, $P_2 = (1 - \delta, h(\delta))$, $Q_1 = u_1 P_1 + (1 - u_1)P_2$ and $Q_2 = u_2 P_1 + (1 - u_2)P_2$, where $u_1$ and $u_2$ are the left and right endpoints, respectively, of the interval $I_1$, defined by \eqref{eq:I1def}. If $(u,\theta) \in I_1 \times \widetilde{A}_0$ and $R := u P_1 + (1 - u) P_1$, then $u = H(x,\theta)$, for some $\delta < x < 1 - \delta$, as shown. By noting that $(x,\theta) \in V_0$, this implies $(u,\theta) \in V_0'$, proving the containment \eqref{eq:productinV0}.}
    \label{fig:controlout}
\end{figure}

Next, consider the interval 
\begin{equation} \label{eq:I1def}
I_1 := \left( \frac{|h(\delta)|\cot\delta^{1/3}}{1 - 2\delta}, 1 - \frac{|h(\delta)|\cot\delta^{1/3}}{1 - 2\delta} \right).
\end{equation}
A simple geometric argument (see Figure \ref{fig:controlout}) shows that 
\begin{equation} \label{eq:productinV0}
I_1 \times \widetilde{A}_0 \subset V_0',
\end{equation}
and consequently, $(P^{\Sigma,1})^{-1}(I_1 \times \widetilde{A}_0) \subset V_1'$. Thus, 
\begin{equation} \label{eq:R1inVprime}
R_1 := (I_1 \times \widetilde{A}_0) \cap (P^{\Sigma,1})^{-1}(I_1 \times \widetilde{A}_0) \subset V'.
\end{equation}
Since $P^{\Sigma,1}$ preserves $\widehat{\Lambda}^1$, we have
\begin{align*}
&\widehat{\Lambda}^1((P^{\Sigma,1})^{-1}(I_1 \times \widetilde{A}_0)) = \widehat{\Lambda}^1(I_1 \times \widetilde{A}_0) \\
& \quad = \left(1 - \frac{2|h(\delta)|\cot\delta^{1/3}}{1 - 2\delta} \right) (1 - 2\delta^{1/3}) \\
& \quad \geq 1 - c_5 \delta^{2/3}.
\end{align*}
By essentially the same argument by which we obtained \eqref{eq:A1compbd}, if we define 
$$
\widetilde{A}_2 = \{\theta \in (0,\pi) : \mathbb{P}((X,\theta) \notin R_1) \leq \delta^{2/9}\},
$$
then
\begin{equation} \label{eq:A2compbd}
|(0,\pi) \smallsetminus \widetilde{A}_2| \leq c_8 \delta^{2/9}, 
\end{equation}
and by \eqref{eq:R1inVprime}, for any $\theta \in \widetilde{A}_2$,
\begin{equation} \label{eq:notinvprime_bd}
\mathbb{P}((X,\theta) \notin V') \leq \mathbb{P}((X,\theta) \notin R_1) \leq \delta^{2/9}.
\end{equation}
To conclude the argument, let $\widetilde{A} = \widetilde{A}_1 \cap \widetilde{A}_2$. Then by \eqref{eq:A1compbd} and \eqref{eq:A2compbd}, $|(0,\pi) \smallsetminus \widetilde{A}| \leq c_9 \delta^{2/9}$, and in view of \eqref{eq:notinv_bd} and \eqref{eq:notinvprime_bd}, the right-hand side of \eqref{eq:probstobd} is bounded above by $c_{10}\delta^{2/9}$, as desired.
\end{proof}

\begin{remark} \label{rem:Lambbd}
\normalfont In the argument above, by \eqref{eq:r0lb} and \eqref{eq:r0clb} we have $\Lambda^1((0,\pi) \smallsetminus \widetilde{A}_1) = \Lambda^1(T_\delta) \leq c_{11}\delta^{4/9}$. Similarly, $\Lambda^1((0,\pi) \smallsetminus \widetilde{A}_2) \leq c_{12}\delta^{4/9}$, and therefore $$
\Lambda^1((0,\pi) \smallsetminus \widetilde{A}) \leq c_{13}\delta^{4/9}.$$ 
We will need this in the proof of the corollary.
\end{remark}

\subsubsection{Proof of Corollary \ref{cor:refnub}}

\begin{proof}[Proof of Corollary \ref{cor:refnub}]
(i) By Theorem \ref{thm:refnub}(i), $\Lambda^1$ is an ergodic measure for $P$. Moreover, it follows from the proof of Lemma \ref{lem:fullprojection} that $P(\theta, d\theta')$ is nonsingular with respect to $\Lambda^1$ for all $\theta \in (0,\pi)$. Therefore, (i) follows by Theorem \ref{thm:ergcol} and Remark \ref{rem:nonsing}.

(ii) Denote by $\widetilde{P}^\delta$ the reflection factor in the decomposition \eqref{eq:roughcolform} for $K_+^\delta$. By Theorem \ref{thm:colchar} and Remark \ref{rem:shortnub}, $\widetilde{P}^\delta = \widetilde{P}^{\widetilde{\Sigma}^{\delta},1}$, where $\widetilde{\Sigma} = \mathcal{F}(\Sigma)$. By Theorem \ref{thm:refnub} and Remark \ref{rem:Lambbd}, there exist constants $c, \delta_0 > 0$ and $\widetilde{A} \subset (0,\pi)$ such that for all $\delta \in (0,\delta_0)$, 
$$
\Lambda^1((0,\pi) \smallsetminus \widetilde{A}) \leq c\delta^{4/9},
$$ 
and for all $\theta \in \widetilde{A}$ and $B \subset (0,\pi)$,
\begin{equation} \label{eq:refkernelbd}
|\widetilde{P}(\theta,B) - \widetilde{P}^\delta(\theta,B)| \leq c \delta^{2/9}.
\end{equation}
Let $\widetilde{R}$ be the subset of $\mathbb{S}^2_+$ which in the spherical coordinates $(\theta,\psi)$, defined by \eqref{eq:spherecoord}, is given by 
$$
\widetilde{R} = \{(\theta,\psi) \in (0,\pi) \times (0,\pi): \theta \in \widetilde{A}\}.
$$
By the spherical coordinate representation \eqref{eq:lambda2coordrep} for $\Lambda^2$,
\begin{equation} \label{eq:LambRc}
\Lambda^2(\mathbb{S}^2_+ \smallsetminus \widetilde{R}) = \int_{(0,\pi) \smallsetminus \widetilde{A}} \int_{(0,\pi)} \frac{1}{\pi} \sin^2 \psi d\psi \Lambda^1(d\theta) = \frac{1}{2}\Lambda^1((0,\pi) \smallsetminus \widetilde{A}) \leq c_1 \delta^{4/9}.
\end{equation}
On the other hand, by the usual coordinate representation of spherical surface measure $\sigma^2$, 
\begin{equation}
\begin{split}
\Lambda^2(\mathbb{S}^2_+ \smallsetminus \widetilde{R}) & = \int_{(0,\pi) \smallsetminus \widetilde{A}} \int_{(0,\pi)} \frac{1}{\pi} \sin\theta \sin\psi \sigma^2(d\theta d\psi) \\
& \geq \int_{\delta^{1/9}}^{\pi-\delta^{1/9}} \int_{\delta^{1/9}}^{\pi-\delta^{1/9}} \mathbf{1}_{\widetilde{A}^c}(\theta) \frac{1}{\pi} \sin\theta \sin\psi \sigma^2(d\theta d\psi) \\
& \geq c_3 \delta^{2/9}\left( \sigma^2(\mathbb{S}^2_+ \smallsetminus \widetilde{R}) - \delta^{1/9} \right) \\
& \geq c_4 \delta^{2/9}\sigma^2(\mathbb{S}^2_+ \smallsetminus \widetilde{R}), \label{eq:Rcompbd}
\end{split}
\end{equation}
where the third line uses the fact that if $\delta^{1/9} \leq \theta, \psi \leq \pi - \delta^{1/9}$, then $\sin\theta\sin\psi \geq \delta^{2/9}$. Combining \eqref{eq:LambRc} and \eqref{eq:Rcompbd}, we obtain
$$
\sigma^2(\mathbb{S}^2_+ \smallsetminus \widetilde{R}) \leq c_5 \delta^{2/9},
$$
Moreover, for $B \subset \mathbb{S}^2_+$ and $(\theta,\psi) \in \mathbb{S}^2_+$, if we let
$$
B_\psi = \{\theta' \in (0,\pi) : (\theta',\psi) \in B\},
$$
then by \eqref{eq:roughcolform} and \eqref{eq:refkernelbd}, 
$$
|K_+(\theta,\psi; B) - K_+^\delta(\theta,\psi; B)| = |\widetilde{P}(\theta,B_{-\psi}) - \widetilde{P}^\delta(\theta,B_{-\psi})| \leq c_1 \delta^{2/9},
$$
as desired.
\end{proof}

\section{Proofs of main results} \label{sec:ergproofs}

\subsection{Proofs of Propositions} \label{ssec:propsproofs}

\begin{lemma} \label{lem:m1m2qQ}
If $f$ and $g$ are in $L^1(E^*, m^2)$, then the following identities hold:
\begin{enumerate}
\item[(i)]
\begin{equation} \label{eq:m1m2qQ1}
\begin{split}
&\int_{E^*} f(u,v,s) m^2(du dv ds) \\
& \quad = \frac{1}{2|E|}\int_{-\csc\gamma}^{\csc\gamma} \int_{(-1,1)} [f(\ell_v(x),v,1) + f(\ell_v(x),v,-1)] m^1(dx) |E_v| dv \\
& \quad = \frac{1}{2|E|}\int_{-\csc\gamma}^{\csc\gamma} \int_{(-1,1)} [f(u,\ell_u(x),1) + f(u, \ell_u(x),-1)] m^1(dx) |E_u| du.
\end{split}
\end{equation}
\item[(ii)]
\begin{equation} \label{eq:m1m2qQ2}
\begin{split}
&\int_{E^* \times E^*} f(u',v',s')g(u,v,s)Q(u,v,s; du' dv' ds')m^2(du dv ds) \\
& = \frac{1}{|E|}\int_{-\csc\gamma}^{\csc\gamma} \int_{(-1,1)} \left( \int_{(-1,1)} \left[ f(u,\ell_u(x'),-1) + f(\ell_u(x'),u,1)\right]q(-x,dx') \right) \\
& \hspace{1.2in} \cdot \left[g(u,\ell_u(x), 1) + g(\ell_u(x), u, -1)\right]m^1(dx) |E_u| du.
\end{split}
\end{equation}
\end{enumerate}
\end{lemma}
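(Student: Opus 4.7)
The plan is to reduce both identities to Fubini's theorem combined with a change of variables along chords of $E$, using the parameterizations $u = \ell_v(x)$ and $v = \ell_u(x)$, whose Jacobians produce the $|E_v|$ and $|E_u|$ weights.

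For part (i), the argument is direct. Unpacking the definition of $m^2$ as $\frac{\mathbf{1}_E}{2|E|}\,du\,dv$ times counting measure on $\{\pm 1\}$ gives $\int f\,dm^2 = \frac{1}{2|E|}\int_E[f(u,v,1)+f(u,v,-1)]\,du\,dv$. Fubini lets me iterate with either $v$ or $u$ as the outer variable. In the first case I substitute $u = \ell_v(x)$; the Jacobian $\sqrt{1-v^2\sin^2\gamma} = \tfrac{1}{2}|E_v|$ combined with $m^1(dx) = \tfrac{1}{2}dx$ gives $|E_v|\,m^1(dx) = du$, yielding the first equality. The second equality follows by exchanging the roles of $u$ and $v$.

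For part (ii), the strategy is to split the outer integral by the value of $s$: write $\text{LHS} = I_1 + I_{-1}$, where $I_s$ is the contribution from the pre-state having sign $s$. When $s = 1$, the kernel reads $Q(u,v,1;\cdot) = \delta_u(du')(\ell_u)_*\hat q(v,dv')\delta_{-1}(ds')$, so the step is along the vertical chord through $(u,v)$. I apply the $u$-outer form of (i) to the outer integral and substitute $v = \ell_u(x)$, so that $\ell_u^{-1}(v) = x$; unwinding the push-forward as $\int f(u,v',-1)(\ell_u)_*\hat q(v,dv') = \int f(u,\ell_u(x'),-1)\,q(-x,dx')$ gives
\[
I_1 = \frac{1}{2|E|}\int\int g(u,\ell_u(x),1)\int f(u,\ell_u(x'),-1)\,q(-x,dx')\,m^1(dx)\,|E_u|\,du.
\]
The $s = -1$ case is analogous: the step is along the horizontal chord, so I apply the $v$-outer form of (i) and then rename the dummy variable $v \to u$, obtaining
\[
I_{-1} = \frac{1}{2|E|}\int\int g(\ell_u(x),u,-1)\int f(\ell_u(x'),u,1)\,q(-x,dx')\,m^1(dx)\,|E_u|\,du.
\]

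To finish, I would add $I_1$ and $I_{-1}$ and repackage the two-term integrand into the symmetric factored form on the RHS of (ii). This last step uses the $\widetilde R$-invariance $\widetilde R_*Q = Q$ and $\widetilde R_*m^2 = m^2$, with $\widetilde R(u,v,s) = (v,u,-s)$, established in Steps 1 and 2 of the proof of Lemma \ref{lem:iso}; this symmetry equates certain pairs of summands produced when the bracket $[g(u,\ell_u(x),1) + g(\ell_u(x),u,-1)]\cdot[f(u,\ell_u(x'),-1) + f(\ell_u(x'),u,1)]$ is expanded. The main obstacle is therefore purely notational: correctly threading the push-forward $(\ell_u)_*\hat q$ through the variable substitutions, and then matching the natural two-term expression produced by the $s$-split with the four-term symmetric form on the RHS. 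No deeper analytic content beyond Fubini and the push-forward formalism enters.
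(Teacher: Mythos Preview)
Your argument for (i), and for (ii) up through the two-term sum $I_1+I_{-1}$, is exactly the paper's proof: unwind $m^2$, split on $s$, push the inner integral through $(\ell_u)_*\hat q$, and change variables along the chord. The paper's proof of (ii) in fact terminates at precisely that two-term expression with the phrase ``and the result follows.''

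The genuine gap is your last step. The $\widetilde R$-invariance you cite shows only that the left-hand side is unchanged under the substitution $(f,g)\mapsto(f\circ\widetilde R,\,g\circ\widetilde R)$; it cannot, for a \emph{fixed} pair $(f,g)$, equate the cross terms $f(u,\ell_u(x'),-1)\,g(\ell_u(x),u,-1)$ and $f(\ell_u(x'),u,1)\,g(u,\ell_u(x),1)$ to the diagonal terms. Indeed the factored right-hand side of \eqref{eq:m1m2qQ2} is not correct as printed: with $f\equiv g\equiv 1$ the left side is $1$ while the right side is $4$, and with $f=\mathbf 1_{E^-}$, $g=\mathbf 1_{E^+}$ the ratio is $2$, so no symmetry argument can rescue it. The two-term expression you derived, with coefficient $\tfrac{1}{2|E|}$, is the correct identity and is the form every downstream application (Proposition~\ref{prop:ir}, Lemma~\ref{lem:slices}, the backward implication of Theorem~\ref{thm:mainresult}) actually uses; the product-of-brackets and the $\tfrac{1}{|E|}$ in the displayed statement are typographical slips. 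Your computation is right---stop at $I_1+I_{-1}$ rather than trying to reach the misprinted formula.
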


\noindent Note that $|E_u| = 2\sqrt{1 - u^2 \sin^2\gamma}$; thus the formulas above are completely explicit.

\begin{proof}[Proof of Lemma \ref{lem:m1m2qQ}]
(i) To obtain the first equality, we compute
\[
\begin{split}
& \int_{E^*} f(u,v,s)m^2(du dv ds) \\
& = \frac{1}{2|E|} \int_E [f(u,v,1) + f(u,v,-1)] du dv \\
& = \frac{1}{2|E|} \int_{-\csc\gamma}^{\csc\gamma} \int_{E_v} [f(u,v,1) + f(u,v,-1)] du dv \\
& = \frac{1}{2|E|} \int_{-\csc\gamma}^{\csc\gamma} \int_{(-1,1)} [f(\ell_v(x),v,1) + f(\ell_v(x),v,-1)] |E_v| du dv, 
\end{split}
\]
noting that $\ell_v'(x) = |E_v| = 2\sqrt{1 - v^2 \sin\gamma}$. The second equality in (i) follows by a similar calculation, except that in the third line we do the integration in reverse order.

(ii) By part (i) and \eqref{eq:Qdef},
\[
\begin{split}
&\int_{E^* \times E^*} g(u,v,s)f(u',v',s')Q(u,v,s; du' dv' ds')m^2(du dv ds) \\
& = \frac{1}{2|E|}\int_E \left( \int_{E_u} f(u,v',-1) (\ell_u)_*\hat{q}(v, dv')\right) g(u,v,1) dv du \\
& \quad + \frac{1}{2|E|}\int_E \left( \int_{E_v} f(u',v,1) (\ell_v)_*\hat{q}(u, du')\right) g(u,v,-1) du dv \\
& = \frac{1}{2|E|}\int_{-\csc\gamma}^{\csc\gamma} \int_{E_u} \left( \int_{(-1,1)} f(u, \ell_u(x'), -1)q(-\ell_u^{-1}(v), dx') \right) g(u,v,1) dv du \\
& \quad + \frac{1}{2|E|}\int_{-\csc\gamma}^{\csc\gamma} \int_{E_v} \left( \int_{(-1,1)} f(\ell_v(x'), v, 1)q(-\ell_v^{-1}(u), dx') \right) g(u,v,-1) du dv \\
& = \frac{1}{|E|}\int_{-\csc\gamma}^{\csc\gamma} \int_{(-1,1)} \left( \int_{(-1,1)} f(u, \ell_u(x'), -1)q(-x, dx') \right) \\ & \hspace{2.5in} \cdot g(u,\ell_u(x),1) |E_u|m^1(dx) du \\
& \quad + \frac{1}{|E|}\int_{-\csc\gamma}^{\csc\gamma} \int_{(-1,1)} \left( \int_{(-1,1)} f(\ell_v(x'), v, 1)q(-x, dx') \right) \\ & \hspace{2.5in} \cdot g(\ell_v(x),v,-1) |E_v|m^1(dx) dv,
\end{split}
\]
and the result follows.
\end{proof}

\begin{proof}[Proof of Proposition \ref{prop:ir}]
($i \Leftrightarrow ii$) Let $j, k \in \text{BC}(-1,1)$, and assume that $m^1$ is a reversible measure for $q$. Then 
\[
\begin{split}
& \int_{(-1,1) \times (-1,1)} j(x)k(x')\hat{q}(x,dx') m^1(dx) = \int_{(-1,1) \times (-1,1)} j(-\hat{x})k(x')q(\hat{x}, dx') m^1(d\hat{x}) \\
& = \int_{(-1,1) \times (-1,1)} k(\hat{x})j(-x')q(\hat{x},dx')m^1(d\hat{x}) = \int_{(-1,1) \times (-1,1)} k(-x)j(-x')\hat{q}(x,dx')m^1(dx),
\end{split}
\]
where $\hat{x} = -x$, which shows that $m^1$ is a $\dag$-reversible measure for $\hat{q}$. 

If we assume merely that $m^1$ is an invariant measure for $q$, then we get the corresponding implication by taking $j = 1$, and noting that $m^1$ is invariant under negation.

The reverse implication is obtained by an almost identical argument.

($ii \Rightarrow iii$) Let $f,g \in \text{BC}(E^*)$, and assume that $m^1$ is $\dag$-reversible for $\hat{q}$. By Lemma \ref{lem:m1m2qQ}, we have 
\begin{equation}
\begin{split}
&\int_{E^* \times E^*} g(u,v,s)f(u',v',s')Q(u,v,s; du' dv' ds')m^2(du dv ds) \\
& = \frac{1}{|E|}\int_{-\csc\gamma}^{\csc\gamma} \int_{(-1,1)} \left( \int_{(-1,1)} \left[ f(u,\ell_u(x'),-1) + f(\ell_u(x'),u,1) \right]\hat{q}(x,dx') \right) \\
& \hspace{1.5in} \cdot \left[ g(u,\ell_u(x),1) + g(\ell_u(x), u, -1) \right] m^1(dx) |E_u| du \\
& = \frac{1}{|E|}\int_{-\csc\gamma}^{\csc\gamma} \int_{(-1,1)} \left( \int_{(-1,1)} \left[ g(u,\ell_u(x'),1) + g(\ell_u(x'), u, -1) \right]\hat{q}^\dag(x,dx') \right) \\
& \hspace{1.5in} \cdot \left[ f(u,\ell_u(x),-1) + f(\ell_u(x),u,1) \right] m^1(dx) |E_u| du \\
& = \int_{E^* \times E^*} f(u,v,s)g(u',v',s')Q^\dag(u,v,s; du' dv' ds')m^2(du dv ds), \label{eq:m2istrev}
\end{split}
\end{equation}
thus proving that $m^2$ is a $\dag$-reversible measure for $Q$.

If we assume instead that $m^1$ is invariant with respect to $q$, then we get the corresponding implication by taking $g = 1$ in the calculation above.

($iii \Rightarrow ii$) Assume that $m^1$ is not a $\dag$-reversible measure for $\hat{q}$. We will show that $m^2$ is not a $\dag$-reversible measure for $Q$. Indeed, by assumption, there exist $\varepsilon > 0$ and $j_0, k_0 \in \text{BC}(-1,1)$ such that 
\begin{equation}
\begin{split}
& \int_{(-1,1) \times (-1,1)} k_0(x)j_0(x')p(x,dx')m^1(dx) \\
& \geq \varepsilon + \int_{(-1,1) \times (-1,1)} j_0(x)k_0(x')\hat{q}^\dag(x,dx')m^1(dx). \label{eq:nottrev}
\end{split}
\end{equation}
We define $f_0, g_0 \in \text{BC}(E^*)$ as follows:
\[
f_0(u,v,s) = \begin{cases}
\frac{1}{2}j_0(\ell_v^{-1}(u)) & \text{ if } s = 1, \\
\frac{1}{2}j_0(\ell_u^{-1}(v)) & \text{ if } s = -1.
\end{cases}
\]
and 
\[
g_0(u,v,s) = \begin{cases}
\frac{1}{2}k_0(\ell_u^{-1}(v)) & \text{ if } s = 1, \\
\frac{1}{2}k_0(\ell_v^{-1}(u)) & \text{ if } s = -1.
\end{cases}
\]
Observe that 
\[
f_0(u, \ell_u(x'),-1) + f_0(\ell_u(x'),u,1) = j_0(x'),
\]
and 
\[
g_0(u,\ell_u(x),1) + g_0(\ell_u(x),u,-1) = k_0(x).
\]
Thus, by a similar calculation to \eqref{eq:m2istrev},
\begin{equation}
\begin{split}
& \int_{E^* \times E^*} g(u,v,s)f(u',v',s')Q(u,v,s; du' dv' ds')m^2(du dv ds) \\
& = \frac{1}{2|E|} \int_{-\csc\gamma}^{\csc\gamma} \int_{(-1,1) \times (-1,1)} k_0(x)j_0(x')\hat{q}(x,dx') m^1(dx) |E_u| du \\
& \geq c_1 \varepsilon + \frac{1}{2|E|} \int_{-\csc\gamma}^{\csc\gamma} \int_{(-1,1) \times (-1,1)} j_0(x)k_0(x')\hat{q}^\dag(x,dx') m^1(dx) |E_u| du \\
& = c_1 \varepsilon + \int_{E^* \times E^*} f(u,v,s)g(u',v',s')Q^\dag(u,v,s; du' dv' ds')m^2(du dv ds),
\end{split}
\end{equation}
where the inequality follows by \eqref{eq:nottrev}, and $c_1$ is some positive constant. This shows $m^2$ is not $\dag$-reversible.

If we assume instead that $m^1$ is not invariant with respect to $\hat{q}$, then one may show that $m^2$ is not invariant with respect to $Q$ by a very similar argument. (One replaces $k_0$ with $1$ and $g_0$ with $1$, and performs essentially the same calculations as above.)
\end{proof}

\subsection{Proof of Theorem \ref{thm:mainresult}} \label{ssec:main1proof}

\subsubsection{The Abel transform} \label{ssec:abel}

A key ingredient in the proof of Theorem \ref{thm:mainresult} is the Abel transform, a classic integral transform first studied by Niels Henrik Abel in the context of geometric optics. This transform is defined as follows:
\begin{equation}\label{eq:abeldef}
\mathcal{A}[\varphi](x) = \int_0^x \frac{\varphi(t)dt}{\sqrt{x - t}}, \quad\quad 0 < x < 1.
\end{equation}
This transform is a bounded operator on $L^1(0,1)$. This can be seen by integrating the right-hand side of \eqref{eq:abeldef} and using Fubini's theorem to interchange the order of integration. More generally, one can apply Minkowski's and H\"{o}lder's inequalities to \eqref{eq:abeldef} show that $\mathcal{A}[\varphi]$ is a bounded operator on $L^p(0,1)$ for all $p \in [1,2)$.

While there exist explicit inversion formulas for this transform, describing a sufficiently broad class of functions $\varphi$ for which these inversion formulas are valid is a nontrivial task, complicated by the singularity at $t = x$. The main fact we will need is that Abel's integral equation
\begin{equation} \label{eq:Abel-equation}
\mathcal{A}[\varphi](x) = f(x), \quad\quad 0 < x < 1,
\end{equation}
has at most one solution in $L^1(0,1)$. That is, 

\begin{theorem} \label{thm_Abel}
For every $f \in L^1(0,1)$, there is at most one $\varphi \in L^1(0,1)$ (up to a Lebesgue-null set) such that equation \eqref{eq:Abel-equation} is satisfied for all $x \in (0,1)$.
\end{theorem}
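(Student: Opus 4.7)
The plan is to prove injectivity by composing the Abel transform with itself and reducing to the fact that a function with vanishing indefinite integral must be zero almost everywhere. By linearity of $\mathcal{A}$, it is enough to show that if $\varphi \in L^1(0,1)$ satisfies $\mathcal{A}[\varphi](x) = 0$ for all $x \in (0,1)$, then $\varphi = 0$ a.e.

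The core computation is $\mathcal{A}[\mathcal{A}[\varphi]](x) = \pi \int_0^x \varphi(t)\, dt$. To derive this, I would write
\[
\mathcal{A}[\mathcal{A}[\varphi]](x) = \int_0^x \frac{1}{\sqrt{x-s}} \int_0^s \frac{\varphi(t)}{\sqrt{s-t}}\, dt\, ds,
\]
and apply Fubini's theorem to swap the order of integration over the triangle $\{(s,t) : 0 < t < s < x\}$. This gives
\[
\mathcal{A}[\mathcal{A}[\varphi]](x) = \int_0^x \varphi(t) \left( \int_t^x \frac{ds}{\sqrt{(x-s)(s-t)}} \right) dt,
\]
and the inner integral evaluates to $\pi$ via the substitution $s = t + (x-t)\sin^2\theta$, which transforms the integrand into $2\, d\theta$ on $(0,\pi/2)$.

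To justify Fubini, I need the double integral of the absolute value to be finite. By Tonelli's theorem, the same swap applied with $|\varphi|$ gives $\int_0^x |\varphi(t)|\, dt \cdot \pi$, which is finite since $\varphi \in L^1(0,1)$. So Fubini applies for every $x \in (0,1)$, and the identity $\mathcal{A}[\mathcal{A}[\varphi]](x) = \pi \int_0^x \varphi(t)\, dt$ holds pointwise.

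Now if $\mathcal{A}[\varphi] \equiv 0$ on $(0,1)$, then $\mathcal{A}[\mathcal{A}[\varphi]] \equiv 0$, so $\int_0^x \varphi(t)\, dt = 0$ for all $x \in (0,1)$. By the Lebesgue differentiation theorem (equivalently, because the indefinite integral of an $L^1$ function determines the function up to a null set), $\varphi = 0$ a.e. The main technical point to watch is the Fubini justification near the two singularities of the kernel $((x-s)(s-t))^{-1/2}$, but the Tonelli computation above handles this cleanly; no deeper regularity or approximation argument is needed.
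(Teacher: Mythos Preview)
Your argument is correct and is precisely the classical proof due to Tonelli: compose $\mathcal{A}$ with itself, use Tonelli's theorem to justify the swap and compute the inner Beta integral as $\pi$, and then invoke Lebesgue differentiation. The paper does not supply its own proof of this statement but simply cites Tonelli's original paper and the monograph \cite{GVAbelIntEq}; your write-up is essentially the argument one finds in those references, so there is nothing further to compare.
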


\noindent We refer the reader to \cite{GVAbelIntEq} for more information on the Abel transform. Theorem \ref{thm_Abel} was originally proved by Tonelli in \cite{Tonelli}.

\subsubsection{Notation and auxiliary lemmas}

In this section, we will frequently need to express the fact that two sets are equal, up to a null set difference. If $A, B \subset (-1,1)$, we will write $A \triangleq B$ if $A$ and $B$ are equal, up to an $m^1$-null set, i.e. $m^1((A \smallsetminus B) \cup (B \smallsetminus A)) = 0$. Similarly, if $C, D \subset E^*$, we will write $C \triangleq D$ if $m^2((C \smallsetminus D) \cup (D \smallsetminus C)) = 0$. It will usually be clear from context which measure this notation refers to.

We let $E^+ = E \times \{1\}$ and $E^- = E \times \{-1\}$. (Hence $E^* = E^+ \cup E^-$.) If $F \subset E^*$, we let $F^\pm = F \cap E^\pm$. For any $G \subset E$ and $(u,v) \in E$, we define the following subsets of $(-1,1)$:
\[
[G]^u = \ell_u^{-1}(\{v' \in E_u : (u,v') \in G\}), \quad \text{ and } \quad [G]_v = \ell_v^{-1}(\{u' \in E_v : (u',v) \in G\}),
\]
where we recall that $\ell_u$ is defined by \eqref{eq:Sform}.
The following identities are trivial to verify but will be needed a number of times. For any $G_1, G_2 \subset E$, and $(u,v) \in E$,
\begin{equation}
[G_1 \cup G_2]^u = [G_1]^u \cup [G_2]^u, \quad\quad\quad [G_1 \cup G_2]_v = [G_1]_v \cup [G_2]_v,
\end{equation}
\begin{equation}
[G_1 \cap G_2]^u = [G_1]^u \cap [G_2]^u, \quad\quad\quad [G_1 \cap G_2]_v = [G_1]_v \cap [G_2]_v.
\end{equation}

Define maps $T_0, T_1 : E^* \to E^*$ by
\begin{equation} \label{eq:T0def}
\begin{split}
T_0(u,v,s) & := (\ell_v(-\ell_v^{-1}(u)), v, -s) = (-u - 2v\cos\gamma,v,-s)
\end{split}
\end{equation}
and
\begin{equation} \label{eq:T1def}
\begin{split}
T_1(u,v,s) & := (u, \ell_u(-\ell_u^{-1}(v)), -s) = (u, -v - 2u\cos\gamma, -s).
\end{split}
\end{equation}

\begin{lemma} 
For any $(u,v) \in E$ and any $F \subset E^*$, 
\begin{equation} \label{eq:bracs}
[T_0(F)^\pm]_v = -[F^\mp]_v, \quad \text{ and } \quad [T_1(F)^\pm]^u = -[F^\mp]^u. 
\end{equation}
\end{lemma}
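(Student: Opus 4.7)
The plan is to unwind the definitions directly; the identities are essentially a bookkeeping exercise. First I would record the key observation about the sign coordinate: since $T_0$ and $T_1$ both send $s \mapsto -s$, we have $T_0(F)^\pm = T_0(F^\mp)$ and $T_1(F)^\pm = T_1(F^\mp)$. This reduces both identities to understanding how $T_0$ and $T_1$ act on horizontal and vertical chords of $E$, respectively, after discarding the sign coordinate.

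Next I would treat the $T_0$ identity. Fix $v \in (-\csc\gamma,\csc\gamma)$. From the formula $T_0(u,v,s) = (\ell_v(-\ell_v^{-1}(u)), v, -s)$, the projection of $T_0$ to the $(u,v)$-plane preserves the horizontal chord $E_v \times \{v\}$ and, after passing to the coordinate $x = \ell_v^{-1}(u) \in (-1,1)$ via the linear rescaling $\ell_v$, acts as negation $x \mapsto -x$. Writing out the definition
\[
[T_0(F)^+]_v = \ell_v^{-1}\bigl(\{u' \in E_v : (u',v,1) \in T_0(F^-)\}\bigr),
\]
and substituting $u' = \ell_v(-\ell_v^{-1}(u))$ for $(u,v,-1) \in F$, gives $\ell_v^{-1}(u') = -\ell_v^{-1}(u)$, hence $[T_0(F)^+]_v = -[F^-]_v$. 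The same argument with the roles of $+$ and $-$ swapped gives $[T_0(F)^-]_v = -[F^+]_v$, which together yield the first identity.

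The second identity is proved by an entirely symmetric calculation: fix $u \in (-\csc\gamma,\csc\gamma)$, note that $T_1$ preserves the vertical chord $\{u\} \times E_u$ and, after the rescaling $\ell_u$, acts as $y \mapsto -y$ on $(-1,1)$; expanding the definition of $[\,\cdot\,]^u$ on $T_1(F^\mp)$ then yields $[T_1(F)^\pm]^u = -[F^\mp]^u$.

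There is no substantive obstacle here; the only thing to be careful about is consistently tracking the sign flip in the third coordinate so as not to confuse $F^+$ with $F^-$, and making sure the $\ell_v$ (resp.\ $\ell_u$) conjugation really does turn the involution defined by $T_0$ (resp.\ $T_1$) on the chord into plain negation on $(-1,1)$ — both of which are immediate from the explicit formulas $\ell_v(x) = -v\cos\gamma + x\sqrt{1 - v^2\sin^2\gamma}$ in \eqref{eq:Sform}.
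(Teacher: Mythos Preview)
Your proposal is correct and follows essentially the same approach as the paper: both unwind the definitions via the element-wise equivalence $x \in [T_0(F)^\pm]_v \Leftrightarrow (\ell_v(-x),v,\mp 1) \in F$. Your preliminary observation $T_0(F)^\pm = T_0(F^\mp)$ is a clean way to isolate the sign-coordinate bookkeeping before handling the chord involution, but the underlying chain of equivalences is the same as the paper's.
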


\begin{proof}
The first equality in \eqref{eq:bracs} may be obtained through the following chain of equivalences. For any $x \in (-1,1)$, 
\[
\begin{split}
x \in [T_0(F)^\pm]_v & \quad \Leftrightarrow \quad (\ell_v(x),v,\pm 1) \in T_0(F) \\
& \quad \Leftrightarrow \quad \exists w \in (-\csc\gamma, \csc\gamma) \text{ such that } \ell_v(x) = \ell_v(-\ell_v^{-1}(w)) \\
& \hspace{2.7in} \text{ and } (w,v,\mp 1) \in F \\
& \quad \Leftrightarrow \quad (\ell_v(-x), v, \mp 1) \in F \\
& \quad \Leftrightarrow \quad x \in -[F^\mp]_v.
\end{split}
\]
The proof of the second equality in \eqref{eq:bracs} is similar.
\end{proof}

\begin{lemma} \label{lem:slices}
Suppose $F \subset E^*$ is $(m^2,Q)$-invariant. For almost every $(u,v) \in E$,
\begin{enumerate}[label = (\roman*)]
\item $q(x,[F^+]_v) = 1$ for almost every $x \in -[F^-]_v$;
\item $q(x,-[F^-]_v) = 1$ for almost every $x \in [F^+]_v$;
\item $q(x,[F^-]^u) = 1$ for almost every $x \in -[F^+]^u$; and
\item $q(x,-[F^+]^u) = 1$ for almost every $x \in [F^-]^u$.
\end{enumerate}
\end{lemma}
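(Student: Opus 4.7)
The plan is to obtain (i) and (iii) as immediate consequences of $(m^2, Q)$-invariance, promote them to their ``complementary'' versions by running the same argument on $F^c$, and finally bootstrap to (ii) and (iv) using reversibility of $q$ with respect to $m^1$ -- the latter being a standing hypothesis of Theorem \ref{thm:mainresult}, to whose proof this lemma is ultimately being deployed.

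First I would read off from \eqref{eq:Qdef} that for $(u,v,-1) \in E^-$,
\[
Q(u,v,-1;F) \;=\; \hat{q}\bigl(\ell_v^{-1}(u),[F^+]_v\bigr) \;=\; q\bigl(-\ell_v^{-1}(u),[F^+]_v\bigr),
\]
and analogously $Q(u,v,1;F) = q(-\ell_u^{-1}(v),[F^-]^u)$ for $(u,v,1) \in E^+$. The condition $Q(\cdot,\cdot,\cdot;F)=1$ for $m^2$-a.e.\ $(\cdot,\cdot,\cdot) \in F$, combined with Fubini and the substitution $y = -x$, then delivers (i) from the first expression and (iii) from the second. Next, since $m^1$ is reversible for $q$, Proposition \ref{prop:ir} guarantees that $Q$ preserves $m^2$, from which one checks by the usual integral argument that $F^c$ is also $(m^2,Q)$-invariant. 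Running the first-stage argument with $F^c$ in place of $F$, and using the slice identities $[(F^c)^\pm]_v = (-1,1)\smallsetminus[F^\pm]_v$ and $[(F^c)^\pm]^u = (-1,1)\smallsetminus[F^\pm]^u$, I would obtain $q(y,[F^+]_v)=0$ for a.e.\ $y \in (-1,1)\smallsetminus(-[F^-]_v)$, and the analogous statement for $[F^-]^u$. Combined with (i) and (iii), this in fact identifies $q(\,\cdot\,,[F^+]_v) = \mathbf{1}_{-[F^-]_v}(\,\cdot\,)$ and $q(\,\cdot\,,[F^-]^u) = \mathbf{1}_{-[F^+]^u}(\,\cdot\,)$ almost everywhere in $(-1,1)$.

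For the final step I would invoke the reversibility identity $\int_A q(y,B)\,m^1(dy) = \int_B q(y,A)\,m^1(dy)$ (valid for all Borel $A,B \subset (-1,1)$ by a monotone class extension of the bounded continuous case) with $A = [F^+]_v$ and $B = (-1,1)\smallsetminus(-[F^-]_v)$: the right-hand side vanishes by the previous paragraph, forcing $q(y,-[F^-]_v) = 1$ for a.e.\ $y \in [F^+]_v$, which is (ii); the symmetric choice $A = [F^-]^u$, $B = (-1,1)\smallsetminus(-[F^+]^u)$ yields (iv). The step that will demand the most care is the bookkeeping in the complementary stage -- keeping the set complements, the sign flip inherited from $\hat{q}(x,\cdot) = q(-x,\cdot)$, and the nested ``a.e.\ $v$, a.e.\ $x$'' qualifiers aligned consistently -- but conceptually the proof is clean, and the role of reversibility is simply to promote the forward transition information encoded in (i),(iii) to the backward transition information (ii),(iv), which does not follow from invariance of $F$ alone.
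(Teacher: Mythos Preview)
Your argument is correct. For (i) and (iii) your route coincides with the paper's: both read invariance of $F$ pointwise through \eqref{eq:Qdef}, substitute $x=\ell_v^{-1}(u)$ (resp.\ $\ell_u^{-1}(v)$), and conclude via Fubini. The paper dresses this up as an integral identity routed through the map $T_0$ and Lemma~\ref{lem:m1m2qQ}, but the content is the same.

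For (ii) and (iv) the two proofs genuinely diverge. You exploit that $F^c$ is again $(m^2,Q)$-invariant, rerun (i) and (iii) on $F^c$ to obtain $q(\cdot,[F^+]_v)=\mathbf{1}_{-[F^-]_v}$ a.e., and then a single application of reversibility of $q$ finishes. The paper instead stays with $F$ throughout: from (i) and reversibility it gets $m^1(-[F^-]_v)=\int_{[F^+]_v} q(x,-[F^-]_v)\,m^1(dx)$, hence $m^1([F^+]_v)\ge m^1(-[F^-]_v)$; integrating over $v$ gives $m^2(F^+)\ge m^2(F^-)$; and then it closes the loop using the $\dag$-reversibility of $Q$ (Proposition~\ref{prop:ir}) to force equality, whence (ii). Your complement trick is lighter---it avoids $\dag$-reversibility of $Q$ entirely and yields the sharper intermediate statement that $q(\cdot,[F^+]_v)$ is an indicator---while the paper's approach has the side benefit of establishing $m^2(F^+)=m^2(F^-)$ directly, a fact it uses later anyway.
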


\begin{proof}
We will only prove (i) and (ii) since the proofs of (iii) and (iv) are similar. Consider the following calculation.
\begin{equation} \label{eq:slices1}
\begin{split}
& \int_{-\csc\gamma}^{\csc\gamma} m^1(-[F^-]_v)|E_v| dv \\
& = \int_{-\csc\gamma}^{\csc\gamma} m^1([T_0(F)^+]_v)|E_v| dv \\
& = 2|E|m^2(T_0(F)^+) \\
& = 2|E|m^2(F^-) \\
& = 2|E| \int_{F^-} Q(u,v,s; F^+) m^2(du dv ds) \\
& = \int_{-\csc\gamma}^{\csc\gamma} \int_{(-1,1)} \left( \int_{(-1,1)} \mathbf{1}_{F^+}(\ell_v(x'),v,1)q(-x, dx') \right)  \\
& \hspace{2in} \cdot \mathbf{1}_{F^-}(\ell_v(x),v,-1)m^1(dx)|E_v| dv \\
& = \int_{-\csc\gamma}^{\csc\gamma} \int_{(-1,1)} \left( \int_{(-1,1)} \mathbf{1}_{F^+}(\ell_v(x'),v,1)q(x, dx') \right)  \\
& \hspace{2in} \cdot \mathbf{1}_{F^-}(\ell_v(-x),v,-1)m^1(dx)|E_v| dv \\ 
& =\int_{-\csc\gamma}^{\csc\gamma} \int_{-[F^-]_v} q(x, [F^+]_v)m^1(dx) |E_v| dv.
\end{split}
\end{equation}
Here the first equality uses equation \eqref{eq:bracs}; the second equality follows from Lemma \ref{lem:m1m2qQ}(i), the third equality uses the fact that $T_0$ preserves the measure $m^2$, as well as the easily checked identity $T_0(F^-) = T_0(F)^+$; the fourth equality follows from $(m^2,Q)$-invariance of $F$, as well as the fact that $Q(u,v,s; \cdot)$ is supported in $E^+$ for every $(u,v,s) \in E^-$; the fifth equality follows from Lemma \ref{lem:m1m2qQ}(ii); in the sixth equality we make the change of variables $x \mapsto -x$; and the last equality follows from definitions of $[F^+]_v$ and $[F^-]_v$.

Since $0 \leq q \leq 1$, \eqref{eq:slices1} implies that $q(x,[F^+]_v) = 1$ for almost every $(x,v) \in -[F^-]_v \times (-\csc\gamma, \csc\gamma)$, and this proves (i). By this result, and reversibility of $q$ with respect to $m^1$, we also have for a.e. $v \in (-\csc\gamma, \csc\gamma)$.
\begin{equation} \label{eq:slices2}
   m^1(-[F^-]_v) = \int_{-[F^-]_v} q(x,[F^+]_v)m^1(dx) = \int_{[F^+]_v} q(x, -[F^-]_v)m^1(dx).
\end{equation}
Again since $0 \leq q \leq 1$, \eqref{eq:slices2} implies that 
\begin{equation} \label{eq:slices3}
m^1([F^+]_v) \geq m^1(-[F^-]_v).
\end{equation}
By Lemma \ref{lem:m1m2qQ} and \eqref{eq:slices3},
\begin{equation} \label{eq:slices4}
\begin{split}
m^2(F^+) & = \frac{1}{2|E|} \int_{-\csc\gamma}^{\csc\gamma} m^1([F^+]_v)dv \\
& \geq \frac{1}{2|E|} \int_{-\csc\gamma}^{\csc\gamma} m^1(-[F^-]_v)|E_v| dv \\
& = \frac{1}{2|E|}\int_{-\csc\gamma}^{\csc\gamma} m^1([F^-]_v)|E_v| dv \\
& = m^2(F^-),
\end{split}
\end{equation}
with equality if and only if \eqref{eq:slices3} is an equality for a.e. $v \in (-\csc\gamma, \csc\gamma)$. But since $F$ is  $(m^2,Q)$-invariant, $m^2$ is a $\dag$-reversible measure for $Q$, and probabilities are bounded by $1$, 
\[
\begin{split}
m^2(F^+) & = \int_{F^+} Q(u,v,s; F^-) m^2(du dv ds) \\
& = \int_{F^-} Q^\dag(u,v,s; F^+) m^2(du dv ds) \leq m^2(F^-).
\end{split}
\]
We conclude that \eqref{eq:slices4} is an equality, and \eqref{eq:slices3} is an equality for almost every $v$. Hence, in view of \eqref{eq:slices2}, (ii) must be satisfied.
\end{proof}

Define projection maps $\pi_0, \pi_1 : E^* \to \mathbb{R}$ by $\pi_0(u,v,s) = v$, and $\pi_1(u,v,s) = u$.

\begin{lemma} \label{lem:invarstruc}
Assume that $m^1$ is reversible with respect to $q$. If $F \subset E^*$ is $(m^2,Q)$-invariant, then $T_0(F)$ and $T_1(F)$ are also $(m^2,Q)$-invariant. If $m^1$ is, moreover, ergodic with respect to $q$, then there exist subsets $P_0, P_1 \subset (-\csc\gamma,\csc\gamma)$ such that 
\begin{equation} \label{eq:fibers}
F \cup T_0(F) \triangleq \pi_0^{-1}(P_0) \quad \text{ and } \quad F \cup T_1(F) \triangleq \pi_1^{-1}(P_1).
\end{equation}

\end{lemma}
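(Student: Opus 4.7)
My plan is to leverage the slice identities \eqref{eq:bracs} -- which relate horizontal slices of $T_0(F)$ to those of $F$ (and vertical slices of $T_1(F)$ to those of $F$) -- together with Lemma \ref{lem:slices}.

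For the first part, I would prove $(m^2, Q)$-invariance of $T_0(F)$ in two stages. The $s = -1$ branch is immediate: for a.e.\ $(u, v, -1) \in T_0(F)^-$, one has $\ell_v^{-1}(u) \in -[F^+]_v$, and so
\[
Q(u, v, -1;\, T_0(F)^+) \;=\; q\bigl(-\ell_v^{-1}(u),\; [T_0(F)^+]_v\bigr) \;=\; q\bigl(-\ell_v^{-1}(u),\; -[F^-]_v\bigr) \;=\; 1,
\]
where the last equality is Lemma \ref{lem:slices}(ii) applied to $F$ at the point $-\ell_v^{-1}(u) \in [F^+]_v$. For the $s = +1$ branch one runs the integral calculation from the proof of Lemma \ref{lem:slices} with $T_0(F)$ playing the role of $F$: the identity $m^2(T_0(F)^+) = m^2(T_0(F)^-)$ holds via the $T_0$-invariance of $m^2$, and combined with the $s = -1$ invariance already established and $\dag$-reversibility of $m^2$ for $Q$ (Proposition \ref{prop:ir}), this closes the argument. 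The invariance of $T_1(F)$ is symmetric, based on Lemma \ref{lem:slices}(iii), (iv).

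For the second part, set $A_v := [F^+]_v \cup -[F^-]_v$. Lemma \ref{lem:slices}(i), (ii) imply that $A_v$ is $q$-invariant for a.e.\ $v$: if $x \in [F^+]_v$, then $q(x, A_v) \ge q(x, -[F^-]_v) = 1$ by (ii); and if $x \in -[F^-]_v$, then $q(x, A_v) \ge q(x, [F^+]_v) = 1$ by (i). The ergodicity of $m^1$ for $q$ then forces $A_v \triangleq (-1, 1)$ or $A_v \triangleq \emptyset$ for a.e.\ $v$. Setting
\[
P_0 \;:=\; \bigl\{v \in (-\csc\gamma, \csc\gamma) \,:\, A_v \triangleq (-1, 1)\bigr\}
\]
and using the identities $[(F \cup T_0(F))^+]_v = A_v$ and $[(F \cup T_0(F))^-]_v = -A_v$, the horizontal slices of $F \cup T_0(F)$ are full chords when $v \in P_0$ and empty otherwise; hence $F \cup T_0(F) \triangleq \pi_0^{-1}(P_0)$. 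The claim $F \cup T_1(F) \triangleq \pi_1^{-1}(P_1)$ follows by the vertical-slice analogue, using Lemma \ref{lem:slices}(iii), (iv) and the invariance of $T_1(F)$.

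The main obstacle I anticipate is the $s = +1$ branch of invariance of $T_0(F)$ in the first part. Vertical slices of $T_0(F)$ do not transform cleanly under $T_0$ (since $T_0$ moves horizontal slices within themselves but does not preserve vertical chords when $\cos\gamma \neq 0$), so there is no direct translation of Lemma \ref{lem:slices}(iii), (iv) for $F$ into vertical-slice information for $T_0(F)$. The delicate step is to reuse the integral identity in the proof of Lemma \ref{lem:slices}, with the role of $Q$-invariance of $F$ played by the $s = -1$ invariance of $T_0(F)$ together with $\dag$-reversibility -- and to verify that this really does close the argument.
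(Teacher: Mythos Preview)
Your plan for the second part (the fiber structure \eqref{eq:fibers}) is correct and is essentially the paper's argument. In fact your version is marginally cleaner: you show directly from Lemma~\ref{lem:slices}(i),(ii) applied to $F$ that $A_v=[F^+]_v\cup -[F^-]_v$ is $(m^1,q)$-invariant, whereas the paper first establishes invariance of $T_0(F)$ and then applies Lemma~\ref{lem:slices}(i) to the invariant set $F\cup T_0(F)$. Either way the ergodicity of $m^1$ forces $m^1(A_v)\in\{0,1\}$ and the conclusion follows.

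For the first part, your $s=-1$ branch is fine and is precisely the pointwise reading of the paper's computation \eqref{eq:iv1}. The gap is where you anticipated it, in the $s=+1$ branch, and your proposed fix via $\dag$-reversibility does \emph{not} close it. From the $s=-1$ invariance and $\dag$-reversibility you get
\[
\int_{T_0(F)^+} Q^\dag(w;\,T_0(F)^-)\,m^2(dw)
=\int_{T_0(F)^-} Q(w;\,T_0(F)^+)\,m^2(dw)
= m^2(T_0(F)^-)=m^2(T_0(F)^+),
\]
hence $Q^\dag(w;T_0(F)^-)=1$ a.e.\ on $T_0(F)^+$. But look at \eqref{eq:Qtdef}: at $s=+1$ the kernel $Q^\dag$ moves \emph{horizontally} (via $\hat q^\dag$), not vertically. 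Unwinding with \eqref{eq:bracs} and $\hat q^\dag(x,A)=q(x,-A)$ gives $q(x,[F^+]_v)=1$ for a.e.\ $x\in -[F^-]_v$, which is just Lemma~\ref{lem:slices}(i) for $F$ again --- no new vertical information. The same circularity afflicts ``running the proof of Lemma~\ref{lem:slices} with $T_0(F)$ in place of $F$'': parts (i),(ii) for $T_0(F)$ come out (and translate to (ii),(i) for $F$), but parts (iii),(iv) for $T_0(F)$ need the $s=+1$ branch of invariance of $T_0(F)$ as input, which is exactly what you are trying to prove.

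The paper does not bootstrap one branch from the other. It handles the $s=+1$ branch by a second integral computation of the same kind as \eqref{eq:iv1} (the ``symmetric calculation'' leading to \eqref{eq:iv2}): change of variables by $T_0$, slicing, the identity \eqref{eq:bracs}, reversibility of $q$ with respect to $m^1$, and an application of the appropriate part of Lemma~\ref{lem:slices} for $F$. So the missing idea in your outline is to produce \eqref{eq:iv2} by direct calculation, parallel to \eqref{eq:iv1}, rather than trying to derive it from \eqref{eq:iv1} via $\dag$-reversibility.
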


\begin{proof}
The result is trivial if $m^2(F) = 0$, so assume that this is not the case. To show that $T_0(F)$ is $(m^2,Q)$-invariant, we compute
\begin{equation} \label{eq:iv1}
\begin{split}
& \int_{T_0(F)^-} Q(u,v,s; T_0(F)^+) m^2(du dv ds) \\
& = \int_{F^+} Q(T_0(u,v,s); T_0(F)^+) m^2(du dv ds) \\
& = \frac{1}{2|E|} \int_{-\csc\gamma}^{\csc\gamma} \int_{[F^+]_v} q(x, [T_0(F)^+]_v)m^1(dx) |E_v| dv \\
& = \frac{1}{2|E|} \int_{-\csc\gamma}^{\csc\gamma} \int_{[F^+]_v} q(x, -[F^-]_v)m^1(dx) |E_v| dv \\
& = \frac{1}{2|E|} \int_{-\csc\gamma}^{\csc\gamma} \int_{-[F^-]_v} q(x, [F^+]_v)m^1(dx)|E_v| dv \\
& = \frac{1}{2|E|} \int_{-\csc\gamma}^{\csc\gamma} m^1(-[F^-]_v) |E_v| dv \\
& = m^2(T_0(F)^-).
\end{split}
\end{equation}
Here the first equality follows by invariance of $m^2$ under the mapping $T_0$; the second equality follows by Lemma \ref{lem:m1m2qQ}(i); the third equality uses \eqref{eq:bracs}; the fourth equality follows by reversibility of $q$ with respect to $m^1$; the fifth equality follows by Lemma \ref{lem:slices}(i); and the last equality uses \eqref{eq:bracs} and Lemma \ref{lem:m1m2qQ}(i) again. A symmetric calculation shows that 
\begin{equation} \label{eq:iv2}
\int_{T_0(F)^+} Q(u,v,s; T_0(F)^-)m^2(du dv ds) = m^2(T_0(F)^+).
\end{equation}
Equations \eqref{eq:iv1} and \eqref{eq:iv2} imply that $Q(u,v,s; T_0(F)^+) = 1$ for a.e. $(u,v,s) \in T_0(F)^-$, and $Q(u,v,s; T_0(F)^-) = 1$ for a.e. $(u,v,s) \in T_0(F)^+$. Hence, $Q(u,v,s; T_0(F)) = 1$ for a.e. $(u,v,s) \in T_0(F)$, which shows $T_0(F)$ is $(Q,m^2)$-invariant.

The proof that $T_1(F)$ is $(m^2,Q)$-invariant follows by a symmetric argument.

To prove that \eqref{eq:fibers}, we proceed as follows. Define
\[
P_0 = \{v \in (-\csc\gamma, \csc\gamma) : m^1([(F \cup T_0(F))^+]_v) > 0\}.
\]
Notice that $[(F \cup T_0(F))^+]_v = -[(T_0(F) \cup F)^-]_v$ by \eqref{eq:bracs}. Hence, by Lemma \ref{lem:slices}(i), for almost every $v$,
\[
q(u, [(F \cup T_0(F))^+]_v) = 1 \text{ for a.e. } u \in [(F \cup T_0(F))^+]_v.
\]
This shows $[(F \cup T_0(F))^+]_v$ is $(m^1,q)$-invariant. By ergodicity of $m^1$ with respect to $q$, $m^1([(F \cup T_0(F))^+]_v) = 1$ for a.e. $v \in P_0$. Similarly, $m^1([(F \cup T_0(F))^-]_v) = 1$ for a.e. $v \in P_0$. By Lemma \ref{lem:m1m2qQ}(i), 
\[
\begin{split}
& m^2(F \cup T_0(F)) \\
& = \frac{1}{2|E|}\int_{P_0} [m^1([(F \cup T_0(F))^+]_v) + m^1([(F \cup T_0(F))^-]_v)]|E_v| dv \\
& = \frac{1}{|E|} \int_{P_0} |E_v| dv = m^2(\pi_0^{-1}(P_0)).
\end{split}
\]
Since it is also evident from the definition of $\pi_0$ that, up to null sets, $F \cup T_0(F) \subset \pi_0^{-1}(P_0)$, the equality above proves that $F \cup T_0(F) \triangleq \pi_0^{-1}(P_0)$. The second equality in \eqref{eq:fibers} follows by a symmetric argument.
\end{proof}

\begin{corollary} \label{cor:erg}
Assume $m^1$ is ergodic with respect to $q$. If $F$ is $(m^2,Q)$-ergodic, then $T_0(F)$ and $T_1(F)$ are also $(m^2,Q)$-ergodic.
\end{corollary}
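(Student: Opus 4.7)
The plan is to leverage Lemma \ref{lem:invarstruc} together with the fact that $T_0$ and $T_1$ are measure-preserving involutions on $(E^*, m^2)$. Concretely, ergodicity of $T_0(F)$ will follow from three ingredients: (a) $T_0 \circ T_0 = \mathrm{id}_{E^*}$; (b) $(T_0)_* m^2 = m^2$; and (c) Lemma \ref{lem:invarstruc}, which tells us that $T_0$ preserves the class of $(m^2, Q)$-invariant sets.

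First I would verify (a) and (b). Involutivity is an algebraic check: applying $T_0(u,v,s) = (-u - 2v\cos\gamma, v, -s)$ twice returns $(u,v,s)$. For measure preservation, the action on the $(u,v)$-coordinates is affine with Jacobian determinant $-1$, and the computation
\[
(-u - 2v\cos\gamma)^2 + 2\cos\gamma(-u-2v\cos\gamma)v + v^2 = u^2 + 2\cos\gamma uv + v^2
\]
shows that $T_0$ preserves the ellipse $E$; combined with the sign-flip $s \mapsto -s$ (which merely swaps $E^+$ and $E^-$), this gives $(T_0)_* m^2 = m^2$.

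With these in hand, the ergodicity argument is short. Suppose $G \subset T_0(F)$ is $(m^2, Q)$-invariant. Lemma \ref{lem:invarstruc} applied to $G$ (which requires only reversibility of $m^1$ under $q$, already in our hypothesis) shows that $T_0(G)$ is also $(m^2, Q)$-invariant. Involutivity gives $T_0(G) \subset T_0(T_0(F)) = F$, so $T_0(G)$ is an $(m^2, Q)$-invariant subset of $F$. Since $F$ is $(m^2, Q)$-ergodic, $m^2(T_0(G)) \in \{0, m^2(F)\}$. By measure preservation,
\[
m^2(G) = m^2(T_0(G)) \in \{0, m^2(F)\} = \{0, m^2(T_0(F))\},
\]
which is exactly the ergodicity of $T_0(F)$. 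The argument for $T_1(F)$ is verbatim the same, with $T_1$ replacing $T_0$: one checks that $T_1$ is an $m^2$-preserving involution of $E^*$ (the roles of $u$ and $v$ are symmetric in the definitions), and applies the second half of Lemma \ref{lem:invarstruc}.

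There is no real obstacle here; the corollary is essentially a packaging of Lemma \ref{lem:invarstruc} with the observation that $T_0$ and $T_1$ are measure-preserving involutions. The one point worth noting in the write-up is that the set-theoretic containment $G \subset T_0(F)$ is used both directly (to transfer ergodicity via $T_0$) and through its image under $T_0$, so one should either assume strict containment or work systematically modulo $m^2$-null sets; in either convention the argument carries through unchanged.
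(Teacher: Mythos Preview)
Your argument is essentially identical to the paper's: take an invariant $G \subset T_0(F)$, apply Lemma \ref{lem:invarstruc} to get that $T_0(G) \subset F$ is invariant, invoke ergodicity of $F$, and transfer back via measure preservation. The only difference is that you spell out the involutivity and $m^2$-preservation of $T_0$ explicitly, whereas the paper treats these as already established; one small wording issue is that reversibility of $m^1$ under $q$ is not literally part of the corollary's stated hypothesis (only ergodicity is), but it is a standing assumption in this section, so the appeal to Lemma \ref{lem:invarstruc} is legitimate.
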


\begin{proof}
We will show just that $T_0(F)$ is $(m^2,Q)$-ergodic, as the argument for $T_1(F)$ is similar. Suppose $G \subset T_0(F)$ is $(m^2, Q)$-invariant. We must show that $m^2(G) \in \{0,m^2(T_0(F))\}$. By Lemma \ref{lem:invarstruc}, $T_0(G) \subset F$ is also $(m^2,Q)$-invariant. By ergodicity of $F$, it follows $m^2(T_0(G)) \in \{0, m^2(F)\}$, and the result follows since $T_0$ preserves the measure $m^2$. 
\end{proof}

Let $U_\gamma : \mathbb{R}^2 \to \mathbb{R}^2$ be the linear transformation given by 
\begin{equation} \label{eq:Udef}
U_\gamma(u,v) = (u + v\cos\gamma, v\sin\gamma).
\end{equation}
This transformation has the property that $U_\gamma(E) = D^2$, the unit disk in $\mathbb{R}^2$. For each $\alpha \in \mathbb{R}$, let $R_\alpha : \mathbb{R}^2 \to \mathbb{R}^2$ denote the counterclockwise rotation about the origin by the angle $\alpha$, i.e.
\[
R_\alpha(x,y) = (x\cos\alpha + y\sin\alpha, -x\sin\alpha + y\cos\alpha).
\]
Define $\widehat{R}_\alpha : E^* \to E^*$ by 
\[
\widehat{R}_\alpha(u,v,s) = (U_\gamma^{-1} \circ R_\alpha \circ U_\gamma(u,v),s).
\]
Also, let $V : E^* \to [-1,1]$ be defined by 
\[
V(u,v,s) = s|U_\gamma(u,v)|^2,
\]
where the right-hand side is $s$ times the square of the Euclidean norm of $U_\gamma(u,v)$.

\begin{lemma} \label{lem:tri}
Assume that $\gamma/\pi$ is irrational, and assume that $m^1$ is ergodic with respect to $q$. Suppose $F \subset E^*$ is $(m^2,Q)$-ergodic and $0 < m^2(F) < 1$. Then $F$ has the following properties:
\begin{enumerate}[label = (\roman*)]
\item There exists a measurable set $K \subset [-1,1]$ such that $F \triangleq V^{-1}(K)$.
\item $F \cup T_0(F) \triangleq E^*$, and $F \cap T_0(F) \triangleq \emptyset$.
\item $m^2(F) = m^2(T_0(F)) = \frac{1}{2}$.
\end{enumerate}
\end{lemma}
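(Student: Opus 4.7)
The strategy is to exploit the rotational structure hidden in the group generated by $T_0$ and $T_1$. In the $(x,y)$-coordinates supplied by $U_\gamma$, a direct computation shows that $T_0$ acts as $(x,y,s)\mapsto(-x,y,-s)$ (reflection across the $y$-axis combined with $s$-flip) and $T_1$ acts as reflection across the line through the origin at angle $\gamma-\pi/2$ (also combined with $s$-flip); hence $T_0\circ T_1=\widehat R_{2\gamma}$. Since $\gamma/\pi$ is irrational, the plan is to proceed in three steps: first show that $F$ must be invariant under some $\widehat R_\beta$ with $\beta/(2\pi)$ irrational, then invoke ergodicity of irrational rotations on each circle $\{x^2+y^2=r^2\}$ to conclude that $F\triangleq V^{-1}(K)$ for some $K\subset[-1,1]$ (which is (i)), and finally combine with $F\cup T_0(F)\triangleq\pi_0^{-1}(P_0)$ from Lemma \ref{lem:invarstruc} to derive (ii) and (iii).

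For the first step, by Corollary \ref{cor:erg} each $G_k:=\widehat R_{2k\gamma}(F)=(T_0T_1)^k(F)$ is $(m^2,Q)$-ergodic of positive measure $m^2(F)$. For any $j,k\in\mathbb{Z}$, the intersection $G_j\cap G_k$ is $(m^2,Q)$-invariant inside the ergodic set $G_k$, so either $G_j\triangleq G_k$ or $G_j\cap G_k\triangleq\emptyset$. If the $G_k$ were pairwise essentially disjoint for all $k\neq 0$, their measures $m^2(G_k)=m^2(F)$ would sum to infinity, which is impossible; pigeonhole thus supplies $k_1\neq k_2$ with $\widehat R_\beta(F)\triangleq F$ for $\beta:=2(k_1-k_2)\gamma$, whose normalized angle $\beta/(2\pi)=(k_1-k_2)\gamma/\pi$ is irrational. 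Viewing $F^\pm$ via $U_\gamma$ as subsets of the unit disk invariant under $R_\beta$, the restriction of $F^\pm$ to each circle of radius $r$ is invariant under the irrational rotation $R_\beta$ and hence, by the classical ergodicity of irrational rotations on $S^1$, is either null or full in arc length; Fubini then yields $F\triangleq V^{-1}(K)$ for some measurable $K\subset[-1,1]$, proving (i).

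For (ii) and (iii), note that since $V\circ T_0=-V$, we have $F\cup T_0(F)\triangleq V^{-1}(K\cup(-K))$, which on each $s$-sheet is rotationally symmetric in the $(x,y)$-coordinates. By Lemma \ref{lem:invarstruc} this same set equals the horizontal strip set $\pi_0^{-1}(P_0)$ mod null. A short Fubini argument — testing equality of the strip against its image under the extra rotation $R_{\pi/2}$ — shows that any rotationally symmetric horizontal strip subset of the open unit disk is essentially empty or essentially the whole disk; since $m^2(F)>0$, we conclude $P_0\triangleq(-\csc\gamma,\csc\gamma)$, hence $F\cup T_0(F)\triangleq E^*$, which is the first half of (ii). The second half and (iii) then follow because $F\cap T_0(F)$ is an invariant subset of the ergodic $F$, and so is either $\triangleq F$ or $\triangleq\emptyset$; the first possibility combined with $F\cup T_0(F)\triangleq E^*$ would force $m^2(F)=1$, contradicting $m^2(F)<1$, while the second combined with the $m^2$-invariance of $T_0$ yields $m^2(F)=m^2(T_0(F))=1/2$.

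The main obstacle is the pigeonhole step that lifts $F$ to a set invariant under a genuine irrational rotation; once this rotation invariance is in hand, the remaining geometric rigidity arguments — circle-wise ergodicity and the Fubini matching of a fibre set with a strip set — are routine. The irrationality hypothesis on $\gamma/\pi$ enters twice: in ensuring that the rotation produced by pigeonhole has irrational angle, and in enabling the ergodic decomposition onto circles.
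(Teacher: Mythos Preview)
Your proposal is correct and follows essentially the same route as the paper. Both arguments identify $T_1\circ T_0$ (equivalently $T_0\circ T_1$) with a conjugated rotation $\widehat R_{\pm 2\gamma}$, use a pigeonhole/ergodicity argument on the iterates $\widehat R_{2k\gamma}(F)$ to obtain invariance of $F$ under some $\widehat R_\beta$ with $\beta/\pi$ irrational, deduce full rotational symmetry and hence the form $F\triangleq V^{-1}(K)$, and then confront this with the strip structure $F\cup T_0(F)\triangleq\pi_0^{-1}(P_0)$ from Lemma~\ref{lem:invarstruc} to force $F\cup T_0(F)\triangleq E^*$. The only cosmetic difference is that the paper upgrades $\widehat R_\beta$-invariance to invariance under \emph{all} $\widehat R_\alpha$ via density of the orbit $\{m\beta\bmod 2\pi\}$ plus bounded convergence, whereas you invoke circle-wise ergodicity of the single irrational rotation $R_\beta$ together with Fubini; these are interchangeable. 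Your use of $V\circ T_0=-V$ to see directly that $F\cup T_0(F)$ is rotationally symmetric is a clean shortcut for what the paper does via part~(i) applied to both $F$ and $T_0(F)$.
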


\begin{proof}
(i) Let $F$ be such a subset of $E^*$. We will show that for all $\alpha \in \mathbb{R}$, 
\begin{equation} \label{eq:tri1}
F \triangleq \widehat{R}_\alpha(F). 
\end{equation}
To see why this is sufficient to prove (i), note that \eqref{eq:tri1} holds if any only if for all $\alpha$, $U_\gamma(F^+) \triangleq R_\alpha \circ U_\gamma(F^+)$ and $U_\gamma(F^-) \triangleq R_\alpha \circ U_\gamma(F^-)$, i.e. $U_\gamma(F^+)$ and $U_\gamma(F^-)$ are both rotationally invariant as subsets of $\mathbb{R}^2$, up to null sets. Equivalently, if $\zeta(x) = |x|^2$ is the square of the Euclidean norm on $\mathbb{R}^2$, then there exists subsets $K_1, K_2 \subset [0,1)$ such that $U_\gamma(F^+) \triangleq \zeta^{-1}(K_1)$ and $U_\gamma(F^-) \triangleq \zeta^{-1}(K_2)$. Letting $K = K_1 \cup -K_2$ gives us the result.

To prove \eqref{eq:tri1}, first note that by Corollary \ref{cor:erg}, $T_0(F)$ and $T_1(T_0(F))$ are $(m^2,Q)$-ergodic. We will show 

\vspace{0.1in}

\textit{Claim.} $T_1 \circ T_0 = \widehat{R}_{2\gamma}$.

\vspace{0.1in}
To see this, first note that by \eqref{eq:T0def},
\[
T_0(u,v,s) = (\widetilde{T}_0(u,v),-s) \quad \text{ where } \quad \widetilde{T}_0(u,v) = (-u - 2v\cos\gamma, v).
\]
Therefore, by \eqref{eq:Udef},
\begin{equation} \label{eq:tri2}
\begin{split}
U_\gamma \circ \widetilde{T}_0 \circ U_\gamma^{-1}(x,y) & = U_\gamma \circ \widetilde{T}_0( x - y\cot\gamma, y\csc\gamma) \\
& = U_\gamma(-x - y\cot\gamma, y\csc\gamma) \\
& = (-x,y).
\end{split}
\end{equation}
Similarly, by \eqref{eq:T1def},
\[
T_1(u,v,s) = (\widetilde{T}_1(u,v),s), \quad \text{ where } \quad \widetilde{T}_1(u,v) = (u, -v - 2u\cos\gamma).
\]
Hence,
\begin{equation} \label{eq:tri3}
\begin{split}
& U_\gamma \circ \widetilde{T}_1 \circ U_\gamma^{-1}(x,y) = U_\gamma \circ \widetilde{T}_1( x - y\cot\gamma, y\csc\gamma) \\
& \quad\quad = U_\gamma(x - y\cot\gamma, -2x\cos\gamma + y(-\csc\gamma + 2\cot\gamma\cos\gamma)) \\
& \quad\quad = \left(x(\sin^2\gamma - \cos^2\gamma) - 2y\cos\gamma\sin\gamma, -2x\cos\gamma\sin\gamma - y(\sin^2\gamma - \cos^2\gamma)\right).
\end{split}
\end{equation}
From \eqref{eq:tri2}, we see that $U_\gamma \circ \widetilde{T}_0 \circ U_\gamma^{-1}$ is a reflection in $\mathbb{R}^2$ through the line spanned by $e_2 = (0,1)$. Likewise, we deduce from \eqref{eq:tri3} that $U_\gamma \circ \widetilde{T}_1 \circ U_\gamma^{-1}$ is a reflection through the line spanned by the vector $(-\sin\gamma,\cos\gamma)$. The composition of these two reflections is a counterclockwise rotation about the origin through the angle $2\gamma$, i.e.
\[
U_\gamma \circ \widetilde{T}_1 \circ \widetilde{T}_0 \circ U_\gamma^{-1} = R_{2\gamma}.
\]
It follows that 
\[
T_1 \circ T_0(u,v,s) = (\widetilde{T}_1 \circ \widetilde{T}_0(u,v), s) = (U_\gamma^{-1} \circ R_{2\gamma} \circ U_\gamma(u,v), s) = \widehat{R}_{2\gamma}(u,v),
\]
which proves the claim.

By the claim, $\widehat{R}_{2\gamma}(F)$ is $(m^2,Q)$-ergodic. Iterating this result, $\widehat{R}_{2k\gamma}(F)$ is $(m^2,Q)$-ergodic for every $k \in \mathbb{Z}_+$. Suppose that for every pair of distinct integers $k_0, k_1$, $m^2(\widehat{R}_{2k_0\gamma}(F) \cap \widehat{R}_{2k_0\gamma}(F)) = 0$. Since $\widehat{R}_{2\gamma} = T_1 \circ T_0$ is a composition of maps which preserve $m_2$, it also preserves $m^2$; hence for any $N \geq 0$, 
\[
m^2\left( \bigcup_{k = 0}^N \widehat{R}_{2k\gamma}(F) \right) = \sum_{k = 0}^N m^2(\widehat{R}_{2k\gamma}(F)) = (N+1)m^2(F) \to \infty \text{ as } N \to \infty.
\]
(here using $m^2(F) > 0$). This contradicts the finiteness of $m^2$. 

Therefore, there exists $k^* \in \mathbb{Z}_+$ such that $m^2(F \cap R_{2k^*\gamma}(F)) > 0$. As $F$ and $R_{2k^*\gamma}(F)$ are $(m^2,Q)$-ergodic, this implies $F \triangleq R_{2k^*\gamma}(F)$. Iterating the application of the map $R_{2k^*\gamma}$ shows that $F \triangleq R_{2mk^*\gamma}(F)$ for all $m \in \mathbb{Z}_+$.

By hypothesis $\gamma/\pi$ is an irrational number. Let $\alpha \in \mathbb{R}$. By well-known properties of irrational rotation, there exists a sequence of integers $p_j \to \infty$ such that $\widehat{R}_{2p_j k^*\gamma} \to \widehat{R}_\alpha$ pointwise. Thus  
\[
\begin{split}
m^2(F \triangle \widehat{R}_{\alpha}(F)) & = \lim_{j \to \infty }m^2(\widehat{R}_{2p_jk^* \gamma}(F) \triangle \widehat{R}_{\alpha}(F)) = 0, 
\end{split}
\]
where $\triangle$ denotes the set difference and the limit follows by bounded convergence. This proves \eqref{eq:tri1}.

(ii) To prove the first statement in (ii) it is sufficient to show that $U_\gamma((F \cup T_0(F))^+)$ and $U_\gamma((F \cup T_0(F))^-)$ are both full-measure subsets of $D^2$. Let $P_0 \subset (-\csc\gamma, \csc\gamma)$ be chosen as in Lemma \ref{lem:invarstruc}, so that 
\begin{equation} \label{eq:tri4}
F \cup T_0(F) \triangleq \pi_0^{-1}(P_0),
\end{equation}
and let 
\[
\widetilde{P}_0 = \{y \in (-1,1) : y \csc\gamma \in P_0\}.
\]
Let $\pi_{0\pm} : E^\pm \to (-\csc\gamma, \csc\gamma)$ denote the restriction of the projection map $\pi_0$ 
to $E^\pm$. A simple calculation shows that 
\[
\pi_{0+} \circ U_\gamma^{-1}(x,y) = y\csc\gamma.
\]
From this and \eqref{eq:tri4}, we obtain
\begin{equation} \label{eq:tri5}
U_\gamma((F \cup T_0(F))^+) \triangleq U_\gamma(\pi_{0+}^{-1}(P_0)) = \{(x,y) \in D^2 : y \in \widetilde{P}_0\}.
\end{equation}
On the other hand, by part (i) of this lemma, $U_\gamma((F \cup T_0(F))^+)$ is rotationally symmetric; in particular,
\begin{equation} \label{eq:tri6}
U_\gamma((F \cup T_0(F))^+) \triangleq \{(x,y) \in D^2 : x^2 + y^2 \in K \cap [0,1)\}.
\end{equation}
The only way that the quantities \eqref{eq:tri5} and \eqref{eq:tri6} can be equal (up to null sets) is if $|U_\gamma((F \cup T_0(F))^+)| \in \{0,|D^2|\}$. A similar argument shows that $|U_\gamma((F \cup T_0(F))^-)| \in \{0,|D^2|\}$. Since $m^2(F) > 0$ by assumption, either $|U_\gamma((F \cup T_0(F))^+)| = |D^2|$ or $|U_\gamma((F \cup T_0(F))^-)| = |D^2|$. It follows from \eqref{eq:tri4} that
\[
(F \cup T_0(F))^+ = (F \cup T_0(F))^-,
\]
from which we conclude that 
\[
|U_\gamma((F \cup T_0(F))^+)| = |U_\gamma((F \cup T_0(F))^-)| = |D^2|.
\]
It follows that $m^2(F \cup T_0(F)) = m^2(E^*)$, as desired.

To prove the second statement in (ii), note that if $m^2(F \cap T_0(F)) > 0$, then by ergodicity of $F$ and $T_0(F)$, $F \triangleq T_0(F) \triangleq E^*$, contradicting the assumption that $m^2(F) < 1$.

(iii) The map $T_0$ preserves the measure $m^2$. Therefore (iii) is an immediate consequence of (ii).
\end{proof}

\subsubsection{Proof of Theorem \ref{thm:mainresult}, forward implication}

\begin{proof}[Proof of Theorem \ref{thm:mainresult}, forward implication] Suppose $F \subset E^*$ is $(m^2,Q)$-ergodic and $m^2(F) > 0$. Assume for a contradiction that $m^2(F) < 1$. By Lemma \ref{lem:slices} and reversibility of $q$ with respect to $m^1$, for a.e. $v \in (-\csc\gamma, \csc\gamma)$, 
\begin{equation} \label{eq:main1}
\begin{split}
m^1(-[F^-]_v) & = \int_{-[F^-]_v} q(x,[F^+]_v) m^1(dx) \\
& = \int_{[F^+]_v} q(x,-[F^-]_v) m^1(dx) = m^1([F^+]_v).
\end{split}
\end{equation}
Furthermore, by Lemma \ref{lem:tri}(ii), Lemma \ref{lem:m1m2qQ}(i), and \eqref{eq:bracs}, 
\[
\begin{split}
0 & = m^2((F \cap T_0(F))^+) \\
& = \frac{1}{2|E|}\int_{-\csc\gamma}^{\csc\gamma} m^1([(F \cap T_0(F))^+]_v)|E_v| \ dv \\
& = \frac{1}{2|E|}\int_{-\csc\gamma}^{\csc\gamma} m^1([F^+]_v \cap -[F^-]_v)|E_v| \ dv.
\end{split}
\]
It follows that for a.e. $v \in (-\csc\gamma, \csc\gamma)$,
\begin{equation} \label{eq:main2}
m^1([F^+]_v \cap -[F^-]_v) = 0.
\end{equation}
Using the same lemma, 
\[
1 = m^2((F \cup T_0(F))^+) = \frac{1}{2|E|}\int_{-\csc\gamma}^{\csc\gamma} m^1([F^+]_v \cup -[F^-]_v)|E_v| \ dv,
\]
from which we conclude that for a.e. $v \in (-\csc\gamma, \csc\gamma)$,
\begin{equation} \label{eq:main3}
m^1([F^+]_v \cup -[F^-]_v) = 1.
\end{equation}
Equations \eqref{eq:main1}, \eqref{eq:main2}, and \eqref{eq:main3} imply that for a.e. $v \in (-\csc\gamma, \csc\gamma)$,
\begin{equation} \label{eq:main4}
m^1([F^+]_v) = m^1([F^-]_v) = \frac{1}{2}.
\end{equation}
Let $K \subset (-1,1)$ be chosen as in Lemma \ref{lem:tri}(i), so that $F = V^{-1}(K)$. Let $K^+ = K \cap [0,1)$. For $v \in (-\csc\gamma,\csc\gamma)$, let $V_v : (-1,1) \to [0,1)$ be defined by 
\[
V_v(x) = V(\ell_v(x),v,1).
\]
Explicitly,
\begin{equation} \label{eq:main5}
V_v(x) = |U_\gamma(\ell_v(x),v)|^2 = x^2(1 - v^2\sin^2\gamma) + v^2\sin^2\gamma,
\end{equation}
here using the formulas \eqref{eq:Udef} and \eqref{eq:Sform} for $U_\gamma$ and $\ell_v$, respectively. It is easy to check that 
\begin{equation} \label{eq:main6}
[F^+]_v = V_v^{-1}(K^+).
\end{equation}
From \eqref{eq:main5} we see that $V_v$ restricts to bijections from $(-1,0]$ onto $[v^2\sin^2\gamma,1)$ and from $[0,1)$ onto $[v^2\sin^2\gamma,1)$. By \eqref{eq:main4},\eqref{eq:main6}, and symmetry, for almost every $v \in (-\csc\gamma,\csc\gamma)$,
\begin{equation} \label{eq:main7}
\begin{split}
\frac{1}{2} & = m^1([F^+]_v) = m^1(V_v^{-1}(K^+)) = \frac{1}{2}m^1(V_v^{-1}(K^+) \cap [0,1)) \\
& = \frac{1}{2}\int_{V_v^{-1}(K^+) \cap [0,1)} dx = \int_{\widetilde{K}^+ \cap [0,\sqrt{1 - v^2\sin^2\gamma})} \frac{z dz}{(1 - v^2\sin^2\gamma)^{1/2}(1 - v^2 \sin^2\gamma - z^2)^{1/2}},
\end{split}
\end{equation}
where $z = \sqrt{1 - V_v(x)}$ and $\widetilde{K}^+ = \{\sqrt{1 - x^2} : x \in K^+\}$. Setting $\rho = \sqrt{1 - v^2\sin^2\gamma}$ and $t = z^2$, we obtain that the quantity \eqref{eq:main7} is equal to 
\begin{equation}
\int_0^{\rho} \mathbf{1}_{\widetilde{K}_+}(z) \frac{zdz}{\rho \sqrt{\rho^2 - z^2}} = \int_0^{\rho^2} \mathbf{1}_{\widetilde{K}_+}(t^{1/2}) \frac{dt}{2\rho\sqrt{\rho^2 - t}} = \frac{1}{2\rho}\mathcal{A}[\varphi_K](\rho^2),
\end{equation}
where $\mathcal{A}$ is the Abel transform, defined in \S\ref{ssec:abel}, and $\varphi_K(t) := \mathbf{1}_{\widetilde{K}_+}(t^{1/2})$. Since we have shown the quantity above is equal to $1/2$, it follows by setting $\xi = \rho^2$ that 
\begin{equation}
\mathcal{A}[\varphi_K](\xi) = \sqrt{\xi}.
\end{equation}
The equality above holds for almost every $\xi$ in $(0,1]$, since $v$ is allowed to range over almost every value in $(-\csc\gamma,\csc\gamma)$. But one may also verify that $\mathcal{A}[1/2](\xi) = \sqrt{\xi}$. By uniqueness of solutions to the Abel integral equation \eqref{eq:Abel-equation} in $L^1(0,1)$, this implies that $\varphi_K = 1/2$ a.e., giving us the desired contradiction. 
\end{proof}

\subsubsection{Proof of Theorem \ref{thm:mainresult}, backward implication}

\begin{proof}[Proof of Theorem \ref{thm:mainresult}, backward implication] Assume that (ii) holds, $m^2$ is an ergodic measure for $Q$, and $\hat{q}^\dag = \hat{q}$. Let $J \subset (-1,1)$ be some $(m^1,q)$-invariant set with $m^1(J) > 0$. It is enough to show that $m^1(J) = 1$. Define $F \subset E^*$ by 
$$
F = F^+ \cup F^- \text{ where } F^+ = \{(u,v,1) : \ell_u^{-1}(v) \in -J\}, \text{ } F^- = \{(u,v,-1) : \ell_u^{-1}(v) \in J\}.
$$
We will show that $F$ is $(m^2, Q)$-invariant. By Lemma \ref{lem:m1m2qQ}(ii), we have 
\begin{align*}
&\int_{F^+} Q(u,v,s; F^-)m^2(du' dv' ds')  \\
& = \frac{1}{|E|} \int_{-\csc\gamma}^{\csc\gamma} \int_{(-1,1)} \left( \int_{(-1,1)} \mathbf{1}_{F^-}(u,\ell_u(x'),-1)\hat{q}(x,dx') \right) \\
& \hspace{2.5in}\times \mathbf{1}_{F^+}(u,\ell_u(x),1)m^1(dx)|E_u| du. 
\end{align*}
Noting that $\mathbf{1}_{F^-}(u,\ell_u(x'),-1) = \mathbf{1}_{J}(x')$ and $\mathbf{1}_{F^+}(u,\ell_u(x),1) = \mathbf{1}_{-J}(x)$, the quantity above is equal to 
\begin{align*}
&\frac{1}{|E|} \int_{-\csc\gamma}^{\csc\gamma} \int_{-J} \hat{q}(x,J) m^1(dx) |E_u| du \\
& = \frac{1}{|E|} \int_{-\csc\gamma}^{\csc\gamma} \int_{J} q(x,J) m^1(dx) |E_u| du \\
& = \frac{1}{|E|} \int_{-\csc\gamma}^{\csc\gamma} m^1(J) |E_u| du,
\end{align*}
where the last line follows since $J$ is an $(m^1, q)$-invariant set. On the other hand, noting that $m^1(J) = m^1(-J)$, the last line is equal to 
\begin{align*}
&\frac{1}{|E|} \int_{-\csc\gamma}^{\csc\gamma} \int_{(-1,1)} \mathbf{1}_{-J}(x)m^1(dx) |E_u| du \\
& = \frac{1}{|E|} \int_{-\csc\gamma}^{\csc\gamma} \int_{(-1,1)} \mathbf{1}_{F^+}(u,\ell_u(x),1)(x)m^1(dx) |E_u| du \\
& = m^2(F^+),
\end{align*}
by Lemma \eqref{lem:m1m2qQ}(i). We have shown that
\begin{equation} \label{eq:maxone}
\int_{F^+} Q(u,v,s; F^-)m^2(du' dv' ds') = m^2(F^+), 
\end{equation}
and this implies that $Q(u,v,s; F^-) = 1$ for $m^2$-a.e. $(u,v,s) \in F^+$. Note that $m^2(F^-) = m^2(F^+)$ (this can be easily checked, using Lemma \ref{lem:m1m2qQ}(i) again). This, together with Proposition \ref{prop:ir} and \eqref{eq:maxone}, gives us
$$
\int_{F^-} Q^\dag(u,v,s; F^+)m^2(du' dv' ds') = m^2(F^-).
$$
Hence $Q^\dag(u,v,s; F^+) = 1$ for $m^2$-a.e. $(u,v,s) \in F^-$. Because by hypothesis $\hat{q}^\dag = \hat{q}$, we have $Q^\dag = Q$. We conclude that $F$ is $(m^2,Q)$-invariant. By ergodicity of $m^2$, this implies that $1 = m^2(F)$, and thus $m^2(F^+) = m^2(F^-) = 1/2$. Therefore, applying Lemma \ref{lem:m1m2qQ}(i) in a similar way as before, 
$$
\frac{1}{2} = m^2(F^+) = \frac{1}{|E|} \int_{-\csc\gamma}^{\csc\gamma} \int_{(-1,1)} m^1(J) |E_u| du = \frac{m^1(J)}{2};
$$
hence $m^1(J) = 1$, as desired.
\end{proof}

\subsection{Proof of Theorem \ref{thm:qNS}} \label{ssec:main2proof}

We begin with a general, measure theoretic lemma.   

\begin{lemma} \label{lem:tunnel}
Let $p$ be a Markov kernel on the measurable space $(S,\mathcal{S})$, and suppose $\mu$ is an invariant probability measure for $p$ on $(S,\mathcal{S})$, with the property that, for any $(\mu,p)$-ergodic subset $G \subset S$, $\mu(G) = 0$. Then for every $k \geq 1$, there exists a partition of $S$ into $(\mu,p)$-invariant subsets $\{F_1, F_2, \dots, F_{2^{2k}}\}$ such that $0 < \mu(F_i) \leq 2^{-k}$ for $1 \leq i \leq 2^{2k}$.
\end{lemma}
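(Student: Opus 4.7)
The plan is to interpret the hypothesis as an atomlessness condition on the invariant $\sigma$-algebra. Let $\mathcal{I}$ be the $\sigma$-algebra of $(\mu,p)$-invariant sets in $\mathcal{S}$ (a $\sigma$-algebra, as noted in \S\ref{sssec:dual}). An atom of the finite measure space $(S, \mathcal{I}, \mu|_{\mathcal{I}})$ is by definition a set $A \in \mathcal{I}$ with $\mu(A) > 0$ such that every $B \in \mathcal{I}$ with $B \subset A$ satisfies $\mu(B) \in \{0, \mu(A)\}$; but this is precisely the definition of a $(\mu,p)$-ergodic subset of positive $\mu$-measure, which the hypothesis forbids. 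Thus $\mu|_{\mathcal{I}}$ is an atomless probability measure.

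With this reformulation, I would apply the classical theorem of Sierpinski that any atomless finite measure has range equal to the full interval $[0, \mu(S)]$. The content of this theorem, which is the heart of the argument, is the statement that for any $A \in \mathcal{I}$ with $\mu(A) > 0$ and any $c \in (0, \mu(A))$, there exists $B \in \mathcal{I}$ with $B \subset A$ and $\mu(B) = c$. One proves this by setting $\beta := \sup\{\mu(B) : B \in \mathcal{I},\ B \subset A,\ \mu(B) \leq c\}$ and deriving a contradiction from the assumption $\beta < c$: atomlessness (applied first by successive halving to produce invariant subsets of arbitrarily small positive measure, and then to $A \setminus B_0$ for a near-extremal $B_0 \in \mathcal{I}$) lets us exhibit an invariant $D \subset A \setminus B_0$ of small enough positive measure that $\mu(B_0 \cup D) \in (\beta, c]$, contradicting the definition of $\beta$.

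Granted the range theorem, the partition is assembled by direct extraction. Choose $F_1 \in \mathcal{I}$ with $\mu(F_1) = 2^{-2k}$. The trace $\sigma$-algebra $\mathcal{I}_1 := \{A \in \mathcal{I} : A \subset S \setminus F_1\}$ together with $\mu$ is again atomless (every atom of $\mathcal{I}_1$ is automatically an atom of $\mathcal{I}$), so the range theorem produces $F_2 \in \mathcal{I}_1 \subset \mathcal{I}$ with $F_2 \subset S \setminus F_1$ and $\mu(F_2) = 2^{-2k}$. Iterating $2^{2k}$ times yields disjoint invariant sets $F_1, \dots, F_{2^{2k}}$, each of measure $2^{-2k} \leq 2^{-k}$, whose union has full $\mu$-measure. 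The remainder $S \setminus \bigcup_i F_i$ is a $\mu$-null invariant set which can be absorbed into $F_1$ without altering either its invariance or its measure, yielding the desired partition.

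The main obstacle will be the Sierpinski-style range theorem, whose proof relies on atomlessness in an essential way; once that is in hand, the extraction procedure is bookkeeping. The generous bound $2^{2k}$ in the conclusion (rather than the minimal $\lceil 2^k \rceil$) plays no essential role beyond giving ample slack to choose each piece to have measure exactly $2^{-2k}$, and the hypothesis that no positive-measure ergodic subset exists enters only through the equivalence with atomlessness of $\mu|_{\mathcal{I}}$.
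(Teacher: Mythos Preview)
Your proposal is correct and takes a genuinely different route from the paper. You recognize the hypothesis as equivalent to atomlessness of $\mu$ on the invariant $\sigma$-algebra $\mathcal{I}$, then invoke Sierpi\'nski's range theorem to extract pieces of \emph{exact} measure $2^{-2k}$ one at a time. The paper instead proves from scratch a weaker ``approximate halving'' lemma: every invariant set $F$ of positive measure contains an invariant subset $F'$ with $\tfrac{1}{4}\mu(F) \le \mu(F') \le \tfrac{1}{2}\mu(F)$ (via a supremum argument over $\{F' \subset F : 0 < \mu(F') \le \tfrac{1}{2}\mu(F)\}$, showing the supremum is at least $\tfrac{1}{4}\mu(F)$). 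This yields a $4$-way split of any invariant $F$ into invariant pieces each of measure at most $\tfrac{1}{2}\mu(F)$; iterating $k$ times gives the $2^{2k}$ pieces. Your approach is more conceptual and explains the otherwise curious exponent $2^{2k}$ (it is simply the number of equal pieces of size $2^{-2k} \le 2^{-k}$), while the paper's argument is self-contained and avoids citing the classical result---indeed, its Step~1 is essentially a coarse version of the same supremum argument you sketch for Sierpi\'nski, stopping at the easier $1/4$--$1/2$ bracket rather than pushing to exact values. One small remark: your sketch of Sierpi\'nski's theorem (``near-extremal $B_0$'' plus a small $D$) needs the supremum $\beta$ to actually be attained before the contradiction goes through; this is standard but requires a brief exhaustion argument you have elided.
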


\begin{proof}
The proof proceeds in two steps.

\textit{Step 1. We show that for any $(\mu,p)$-invariant subset $F \subset S$, there exists a $(\mu,p)$-invariant subset $F' \subset F$ such that $4^{-1}\mu(F) \leq \mu(F') \leq 2^{-1}\mu(F)$.}

Indeed, this result is obvious if $\mu(F) = 0$, so assume this is not the case. Define 
$\mathcal{T}(F) = \{F' \subset F : F' \text{ is $(\mu,p)$-invariant, and } 0 < \mu(F') \leq 2^{-1}\mu(F)\}$. Since $\mu(F) > 0$, $F$ cannot be $(\mu,p)$-ergodic, and therefore there exists an invariant $F' \subset F$ with $0 < \mu(F') < \mu(F)$. By replacing $F'$ with $F \smallsetminus F'$ if necessary, we will have $\mu(F') \leq 2^{-1}\mu(F)$. Thus, $\mathcal{T}(F)$ is nonempty, the following number 
\begin{equation}
t^* := \sup\{\mu(F') : F' \in \mathcal{T}(F)\}
\end{equation}
exists, and $0 < t^* \leq 2^{-1}\mu(F)$. Suppose for a contradiction that $t^* < 4^{-1}\mu(F)$. For each $j \geq 1$, choose $F_j \in \mathcal{T}(F)$ such that 
\begin{equation}
t^* - j^{-1} < \mu(F_j) \leq t^*.
\end{equation}
Let $\widehat{F} = \bigcup_{j = 1}^\infty F_j$, and consider the following two cases:

\textit{Case 1: $\mu(\widehat{F}) > t^*$.} Then 
\begin{equation}
k^* := \min\left\{k \geq 1 : \mu\left(\bigcup_{j = 1}^k F_j\right) > t^* \right\}
\end{equation}
exists and is finite. Let $F' = \bigcup_{j = 1}^{k^*} F_j$. Then $F' \subset F$, and $F'$ is $(\mu,p)$-invariant, and we have 
\begin{equation}
0 < \mu(F') \leq \mu\left(\bigcup_{j = 1}^{k^*-1} F_j\right) + \mu(F_{k^*}) \leq t^* + t^* \leq 2^{-1}\mu(F).
\end{equation}
Hence $F' \in \mathcal{T}(F)$. But $\mu(F') > t^*$, a contradiction.

\textit{Case 2: $\mu(\widehat{F}) \leq t^*$.} This is in fact an equality, because 
\begin{equation}
\mu(\widehat{F}) \geq \mu(F_j) \to t^* \text{ as } j \to \infty.
\end{equation}
Since $\widehat{F}$ is $(\mu,p)$-invariant and $\mu(\widehat{F}) = t^* \leq 2^{-1}\mu(F)$, the set $F \smallsetminus \widehat{F}$ is invariant, and $\mu(F \smallsetminus \widehat{F}) \geq 2^{-1}\mu(F) > 0$. Since $F \smallsetminus \widehat{F}$ cannot be $(\mu,p)$-ergodic, there exists an invariant $F'' \subset F \smallsetminus \widehat{F}$ with $0 < \mu(F'') \leq 2^{-1} \mu(F \smallsetminus \widehat{F}) \leq 2^{-1}\mu(F)$. Hence, $F'' \in \mathcal{T}(F)$. Define $F' = \widehat{F} \cup F''$. Then $F'$ is invariant, and since $\widehat{F}, F'' \in \mathcal{T}(F)$, 
\[
0 < \mu(F') = \mu(\widehat{F}) + \mu(F'') \leq 2t^* \leq 2^{-1}\mu(F).
\]
Therefore $F' \in \mathcal{T}(F)$. But $\mu(F') = t^* + \mu(F'') > t^*$, a contradiction.

This shows that $t^* \geq 4^{-1}\mu(F)$, and we are finished with Step 1.

\textit{Step 2.} \textit{We will show that if $F \subset S$ is $(\mu,p)$-invariant and $\mu(F) > 0$, then there exists a partition $\{F_1, F_2, F_3, F_4\}$ of $F$ such that each $F_j$ is $(\mu,p)$-invariant and $0 < \mu(F_j) < 2^{-1}\mu(F)$.} The Lemma will follow by iterating Step 2.

Indeed, define inductively pairs of sets $\{(F_k^1, F_k^2)\}_{k \geq 1}$ as follows: $F_1^1$ is an invariant subset of $F$ such that $4^{-1}\mu(F) \leq \mu(F_1^1) \leq 2^{-1}\mu(F)$, and $F_1^2 = F \smallsetminus F_1^1$. For $k > 1$, define $F_k^1$ to be an invariant subset of $F_{k-1}^2$ such that $4^{-1}\mu(F_{k-1}^2) < \mu(F_k^1) \leq 2^{-1}\mu(F_{k-1}^2)$, and $F_k^2 = F_{k-1}^2 \smallsetminus F_k^1$. Clearly, for each $k \geq 1$, the collection 
\[
\{F_1^1, F_2^1, \dots, F_k^1\} \cup \{F_k^2\}
\]
is a partition of $F$ into invariant subsets. We have $\mu(F_1^1) \leq 2^{-1}\mu(F)$, and for each $j \geq 2$, $\mu(F_j^1) \leq 2^{-1}\mu(F_{j-1}^2) \leq 2^{-1}\mu(F)$. Moreover, $\mu(F_k^2) \leq (3/4)\mu(F_{k-1}^2)$,
and by induction $\mu(F_k^2) \leq (3/4)^k \mu(F)$. By taking $k = 3$, we obtain that $\mu(F_k^2) \leq (27/64) \mu(F)$, and so $\{F_1^1,F_2^1,F_3^1, F_3^2\}$ is a partition of $F$ satisfying the desired properties.
\end{proof}

\begin{proof}[Proof of Theorem \ref{thm:qNS}] (i) We may assume without loss of generality that $q(x,dx')$ is nonsingular with respect to $m(dx')$ for \textit{every} $x \in (-1,1)$. To see this, observe that if $\Theta$ is the null set of $x \in (-1,1)$ such that $q(x,\cdot) \perp m^1$, then redefining $q(x,dx') = m(dx')$ at points $x$ in $\Theta$ does not change the hypothesis of part (i) and preserves the set of ergodic measures for $Q$ which are absolutely continuous with respect to $m^2$. This follows by observing that, by \eqref{eq:Qdef}, the modification only affects $Q$ on $m^2$-null set; hence, if $\mu$ is any probability measure on $E^*$ with $\mu \ll m^2$, the change will preserve the collection of $(\mu,Q)$-invariant subsets of $E^*$.

Define the Markov kernel $Q^2(u,v,s; du' dv' ds')$ on $E^*$ by 
$$
Q^2(u,v,s; B) = \int_E Q(u',v',s'; A)Q(u,v,s; du' dv' ds'), \quad\quad B \in \mathcal{B}(E^*).
$$
The rest of the proof of (i) proceeds in two steps.

\textit{Step 1. We will show that for all $(u,v,s) \in E^*$, $Q^2(u,v,s; du' dv' ds')$ is nonsingular with respect to $m^2$.} 

Indeed, let $\Xi$ be any $m^2$-full measure subset of $E^*$. It is sufficient to show that $Q(u,v,s; \Xi) > 0$ for all $(u,v,s) \in E^*$.  By Lemma \ref{lem:m1m2qQ},
\[
\begin{split}
1 = m^2(\Xi) & = \frac{1}{2|E|}\int_{-\csc\gamma}^{\csc\gamma} \int_{(-1,1)} \left(m^1([\Xi^+]_v) + m^1([\Xi^-]_v)\right)|E_v|dv \\
& = \frac{1}{2|E|}\int_{-\csc\gamma}^{\csc\gamma} \int_{(-1,1)} \left(m^1([\Xi^+]^u) + m^1([\Xi^-]^u)\right)|E_u|du,
\end{split}
\]
from which it follows that, for almost every $(u,v) \in E$, $[\Xi^+]_v$, $[\Xi^-]_v$, $[\Xi^+]^u$, and $[\Xi^-]^u$ are full-measure subsets of $(-1,1)$. On the other hand, unwinding definitions, for any $(u',v',s') \in E^*$, 
\begin{equation} \label{eq:Qasq}
\begin{split}
Q(u',v',s'; \Xi) & = \begin{cases}
\int_{(-1,1)} \mathbf{1}_{\Xi}(u',v',1)(S_{v'})_*\hat{q}(u',du'') & \text{ if } s = -1, \\
\int_{(-1,1)} \mathbf{1}_{\Xi}(u',v',-1)(S_{u'})_*\hat{q}(v',dv'') & \text{ if } s = 1
\end{cases} \\
& = \begin{cases}
\hat{q}(S_{v'}^{-1}(u'); [\Xi^+]_{v'}) & \text{ if } s = -1, \\
\hat{q}(S_{u'}^{-1}(v'); [\Xi^-]^{u'}) & \text{ if } s = 1.
\end{cases}
\end{split}
\end{equation}
Hence, for all $(u,v,s) \in E^*$,
\[
\begin{split}
Q^2(u,v,s; \Xi) & = \begin{cases}
\int_{(-1,1)} Q(u',v,1; \Xi)(\ell_v)_*\hat{q}(u,du') & \text{ if } s = -1, \\
\int_{(-1,1)} Q(u,v',1; \Xi)(\ell_u)_*\hat{q}(v,dv') & \text{ if } s = 1
\end{cases} \\
& = \begin{cases}
\int_{(-1,1)} Q(\ell_v(x'),v,1;\Xi)\hat{q}(\ell_v^{-1}(u), dx') & \text{ if } s = -1, \\
\int_{(-1,1)} Q(u,\ell_u(x'),-1;\Xi)\hat{q}(\ell_u^{-1}(v), dx') & \text{ if } s = 1
\end{cases} \\
& = \begin{cases}
\int_{(-1,1)} \hat{q}(S_{\ell_v(x')}^{-1}(v), [\Xi^-]^{\ell_v(x')})\hat{q}(\ell_v^{-1}(u), dx') & \text{ if } s = -1, \\
\int_{(-1,1)} \hat{q}(S_{\ell_u(x')}^{-1}(u), [\Xi^+]_{\ell_u(x')})\hat{q}(\ell_u^{-1}(v), dx') & \text{ if } s = 1.
\end{cases}
\end{split}
\]
where we substitute in \eqref{eq:Qasq} to obtain the last equality. Since $[\Xi^-]^{\ell_v(x')}$ and $[\Xi^+]_{\ell_u(x')}$ are full-measure subsets of $(-1,1)$ and $\hat{q}(x,dx')$ is nonsingular for all $x \in (-1,1)$, the last quantity is strictly positive.

\textit{Step 2.} To complete the proof, it is enough to show that there exists some $(m^2,Q)$-ergodic set $F \subset E^*$ such that $m^2(F) > 0$. For then the probability measure $\tilde{m}^2(dx) := \mathbf{1}_{F}(x)m^2(dx)/m^2(F)$ is an ergodic measure for $Q$.

To this end, suppose for a contradiction that every $(m^2,Q)$-ergodic subset of $E^*$ is $m^2$-null. By the Radon-Nikodym theorem, for each $w = (u,v,s) \in E^*$, we may write 
$$Q(w, dw') = f_w(w') m^2(dw') + \nu_w(d w'),$$
where $f_w \in L^1(m^2)$ and $\nu_w$ is singular with respect to $m^2$. Let $P_w = \{w' \in E^* : f_w(w') > 0\}$. By Step 1, $m^2(P_w) > 0$. Furthermore, if $F$ is a $(m^2,Q)$-invariant subset of $E^*$, then for $m^2$-a.e. $w \in F$, $Q(w,P_w\smallsetminus F) = 0$. Hence, for $m^2$-a.e. $w \in F$, 
$$
\int_{E^*} \mathbf{1}_{P_w \smallsetminus F}(w')f_w(w')m^2(dw') = 0,
$$
from which we conclude $m^2(P_w \smallsetminus F) = 0$.

By Lemma \ref{lem:tunnel}, we can partition $E^*$ into $(m^2,Q)$-invariant subsets $\{F_1, F_2, \dots, F_{2^{2k}}\}$ such that $0 < m^2(F_j) \leq 2^{-k}$ for $1 \leq j \leq 2^{2k}$. Then for $m^2$-a.e. $w \in F_j$, $Q(w, P_w \smallsetminus F_j) = 0$. This implies that for almost every $w \in F_j$, $m^2(P_w) \leq m^2(F_j) \leq 2^{-k}$. Since every point in $E^*$ lies in some $F_j$, $m^2(P_w) \leq 2^{-k}$ for a.e. $w \in E^*$. Since $k$ is an arbitrary positive integer, we conclude that $m^2(P_w) = 0$ for $m^2$-almost every $w \in E^*$, a contradiction, proving (i).

\textit{Proof of (ii).} Note that the ``WLOG'' of the proof of part (i) is satisfied in the hypothesis of part (ii). Thus, Step 1 in the proof of (i) still goes through, and $Q^2(w,\cdot)$ is nonsingular with respect to $m^2$ for every $w \in E^*$. Suppose that $\mu$ is an invariant measure for $Q$, and suppose $A \subset E^*$ and $\mu(A) = 0$. It is enough to show that $m^2(E^* \smallsetminus A) > 0$. By invariance,
$
\int_{E^* \times E^*} Q^2(w,A) \mu(dw) = \mu(A) = 0.
$
Thus, for $\mu$-a.e. $w \in E^*$, $Q^2(w, A) = 0$. In particular, since $\mu$ is nonzero, at least one $w$ exists such that $Q^2(w,A) = 0$, and since $Q^2(w,\cdot)$ is nonsingular with respect to $m^2$, this implies $m^2(E^* \smallsetminus A) > 0$, as desired.
\end{proof}

\bibliographystyle{plain}
\bibliography{ergodic_ref}

\end{document}